\newtheorem{theorem}{Theorem}
\newtheorem{lemma}[theorem]{Lemma}
\newtheorem{definition}[theorem]{Definition}
\newtheorem{remark}[theorem]{Remark}
\newcommand{\diam}{\operatorname*{diam}}
\newcommand{\vertiii}[1]{{\left\vert\kern-0.25ex\left\vert\kern-0.25ex\left\vert #1 
    \right\vert\kern-0.25ex\right\vert\kern-0.25ex\right\vert}}
\newcommand{\tri}{\mathcal{T}}
\newcommand{\triH}{\tri_H}
\newcommand{\trih}{\tri_h}
\newcommand{\umsH}{\vec u^{\operatorname*{ms}}_{H,h}}
\newcommand{\tildeumsH}{\tilde{\vec u}^{\operatorname*{ms},k}_{H,h}}
\newcommand{\umsHk}{\vec u^{\operatorname*{ms},k}_{H,h}}
\newcommand{\umsHkell}{\vec u^{\operatorname*{ms},k,\ell}_{H,h}}
\newcommand{\vmsH}{\vec v^{\operatorname*{ms}}_{H,h}}
\newcommand{\pH}{p_{H}}
\newcommand{\VmsH}{V^{\operatorname*{ms}}_{H,h}}
\newcommand{\VmsHk}{V^{\operatorname*{ms},k}_{H,h}}
\newcommand{\KmsH}{K_{H,h}^{\operatorname*{ms}}}
\newcommand{\norm}[1]{\left\lVert#1\right\rVert}
\newcommand{\R}{\mathbb{R}}
\newcommand{\T}{\mathcal{T}}
\newcommand{\RT}{\mathcal{RT}}
\newcommand{\IH}{\Pi_H}
\newcommand{\Ih}{\Pi_h}
\newcommand{\Qh}{Q_h}
\newcommand{\Qhf}{Q_h^{\operatorname*{f}}}
\newcommand{\oQhf}[1]{Q_h^{\operatorname*{f}}(#1)}
\newcommand{\Vh}{V_h}
\newcommand{\Vhf}{V_h^{\operatorname*{f}}}
\newcommand{\oVhf}[1]{V_h^{\operatorname*{f}}(#1)}
\newcommand{\vf}{v^{\operatorname*{f}}}
\newcommand{\Kh}{K_h}
\newcommand{\Khf}{K_h^{\operatorname*{f}}}
\newcommand{\oKhf}[1]{K_h^{\operatorname*{f}}(#1)}
\newcommand{\Hdiv}{H(\mathrm{div}, \Omega)}
\newcommand{\Hzdiv}{H_0(\mathrm{div}, \Omega)}
\newcommand{\HdivRd}{H(\mathrm{div}, \R^d)}
\newcommand{\oPoly}[2]{\mathbb{P}^{#1}(#2)}
\newcommand{\Ltwo}{L^2(\Omega)}
\renewcommand{\vec}[1]{\mathbf{#1}}
\newtheorem{assumption}{Assumption}
\begin{document}

\begin{center}
 {\LARGE Multiscale mixed finite elements}\\[2em]
\end{center}

\renewcommand{\thefootnote}{\fnsymbol{footnote}}
\renewcommand{\thefootnote}{\arabic{footnote}}

\begin{center}
{
 \large Fredrik Hellman\footnote[1]{Department of Information Technology, Uppsala University, Box 337, SE-751 05 Uppsala, Sweden. Supported by Centre for Interdisciplinary Mathematics (CIM), Uppsala University.},
  Patrick Henning\footnote[2]{Department of Mathematics, KTH Royal Institute of Technology, SE-100 44 Stockholm, Sweden.},
  Axel M\r{a}lqvist\footnote[3]{Department of Mathematical Sciences,
Chalmers University of Technology and
University of Gothenburg 
SE-412 96 G\"oteborg, Sweden. Supported by the Swedish Research Council.}
}
\end{center}

\begin{center}
{\large{\today}}
\end{center}

\begin{center}
\end{center}

\begin{abstract}
In this work, we propose a mixed finite element method for solving elliptic multiscale problems based on a localized orthogonal decomposition (LOD) of Raviart--Thomas finite element spaces. It requires to solve local problems in small patches around the elements of a coarse grid. These computations can be perfectly parallelized and are cheap to perform. Using the results of these patch problems, we construct a low dimensional multiscale mixed finite element space with very high approximation properties. This space can be used for solving the original saddle point problem in an efficient way. We prove convergence of our approach, independent of structural assumptions or scale separation. Finally, we demonstrate the applicability of our method by presenting a variety of numerical experiments, including a comparison with an MsFEM approach.
\end{abstract}

\paragraph*{Keywords}
mixed finite elements, multiscale, numerical homogenization, Raviart--Tho\-mas spaces, upscaling

\paragraph*{AMS subject classifications}
35J15, 35M10, 65N12, 65N30, 76S05

\section{Introduction}
In this work we study the mixed formulation of Poisson's equation with
a multiscale diffusion coefficient, i.e.\ where the diffusion
coefficient is highly varying on a continuum of different scales. 
For such coefficients, the solution is typically also
highly varying and standard Galerkin methods fail to converge to the correct solution,
unless the features on the finest scale are resolved by the underlying computational mesh.
A classical application is the flow in a porous medium, modeled by Darcy's law.
In this case, the multiscale coefficient describes
a permeability field, which is heterogeneous, rapidly varying and has high contrast. 
Classical discretizations that involve the full fine scale often lead
to a vast number of degrees of freedom, which limits the performance
and feasibility of corresponding computations.  In this paper, we
address this kind of problems in the context of mixed finite
elements.

We will interpret the mixed formulation of Poisson's equation in a
Darcy flow setting, referring to the vector component as flux, and the
scalar component as pressure. In Darcy flow applications the flux
solution is of particular interest since it tells us how a fluid is
transported through the medium. It is desirable and common to use flux
conservative discretization schemes. The proposed method is based on
the Raviart--Thomas finite element \cite{RaTh77} which is locally flux
conservative.
Concerning the mixed formulation of Poisson's equation,
corresponding multiscale methods were for instance proposed in \cite{Aa04, Ar11, ArBo06, ChHo02}.
These methods are based on the Raviart--Thomas finite element and
fit into the framework of the Multiscale Finite Element Method (MsFEM, cf. \cite{HoWu97}).
Another family of multiscale methods is derived
from the framework of the Variational Multiscale Method (VMS)
\cite{HuSa07,Hu95,HuFeGoMaQu98,LaMa07,NoPaPi08}. Multiscale methods
for mixed finite elements based on VMS are proposed and studied in
\cite{Arb04,LaMa09,Ma11}. Inspired by the results presented in \cite{Ma11},
a new multiscale framework arose \cite{MaPe14}.  We refer to this
framework as Localized Orthogonal Decomposition (LOD). It is based on
the idea that a finite element space is decomposed into a low
dimensional space that incorporates multiscale features and a high
dimensional remainder space which is given as the kernel of an
interpolation or quasi-interpolation operator. The multiscale space
can be used for Galerkin-approximations and allows for cheap
computations.  Various realizations have been proposed so far.  For
corresponding formulations and rigorous convergence results for
elliptic multiscale problems, we refer to
\cite{AbH15,ElGiHe14,HeMa14,HeP13,MaPe14} for Galerkin finite element
methods, to \cite{ElGeMa13,ElGiHe14} for discontinuous Galerkin
methods and to \cite{HMP13b} for Galerkin Partition of Unity methods.
Among the various applications we refer to the
realizations for eigenvalue problems \cite{MaP14b}, for semilinear
equations \cite{HeMaPe14}, for the wave equation \cite{AbH14c} and for
the Helmholtz equation \cite{Pet14}.

In this paper we introduce a two level discretization of the mixed
problem, that is we work with two meshes: A fine mesh (mesh size $h$)
which resolves all
the fine scale features in the solution and a coarse mesh (mesh size
$H$) which is of computationally feasible size. This gives us a fine
and a coarse Raviart--Thomas function space for the flux. We denote them 
respectively by
$V_h$ (high dimensional) and $V_H$ (low dimensional).
The kernel of
the (standard) nodal Raviart--Thomas interpolation operator $\IH$ onto
$V_H$ is the detail space $\Vhf$. This space can be interpreted as all
fine scale features that can not be captured in the coarse space
$V_H$. A low dimensional ideal multiscale space is constructed as the
orthogonal complement to the divergence free fluxes in $\Vhf$. We
prove that this space has good approximation properties in the sense
that the energy norm of the error converges with $H$ without
pre-asymptotic effects due to the multiscale features.
However, the basis functions of the ideal
multiscale space have global support and are expensive to compute. We
show exponential decay of these basis functions allowing them to be
truncated to localized patches with a preserved order of accuracy for
the convergence. The resulting space is called the localized
multiscale space. The problems 
that are associated with the
localized basis functions have a small
number of degrees of freedom and can be solved in parallel with
reduced computational cost and memory requirement. Once computed, the
low dimensional localized multiscale space can be reused in a
nonlinear or time iterative scheme.

We prove inf-sup stability and a priori error estimates (of linear
order in $H$) for both the ideal and the localized method. The local
$L^2$-instability of the nodal Raviart--Thomas interpolation operator
leads to instabilities as $h$ decreases for the localized method. We
show that these instabilities can be compensated by increasing the
patch size or using Cl\'ement-type interpolators instead. In the
numerical examples we verify that the localized method has the
theoretically derived order of accuracy. We confirm our theoretical
findings by performing experiments on the unit square and an L-shaped
domain, as well as using a diffusion coefficient with high contrast
noise and channel structures. The proposed method is also compared
numerically with results from an MsFEM-based approach using a
permeability field from the SPE10 benchmark problem.

\section{Preliminaries}
We consider a bounded Lipschitz domain $\Omega\subset \R^d$ (dimension $d = 2$ or $3$)
with a piecewise polygonal boundary
$\partial \Omega$ and let $\vec n$ denote the outgoing normal vector
of $\partial \Omega$. For any subdomain
$\omega \subseteq \Omega$, we shall use standard notation for Lebesgue and Sobolev spaces,
i.e.\ for $r\in[1,\infty]$, $L^r(\omega)$ consists of measurable functions with bounded $L^r$-norm and
the space $H^1(\omega)$ consists of $L^2$-bounded weakly differentiable functions
with $L^2$-bounded partial derivatives. The full norm on $H^1(\omega)$
shall be denoted by $\| \cdot \|_{H^1(\omega)}$, whereas
the semi-norm is denoted by $| \cdot |_{H^1(\omega)}:=\| \nabla \cdot \|_{L^2(\omega)}$.

For scalar functions $p$ and $q$ we denote by $(p, q)_{\omega} := \int_\omega
p\,q$ the $L^2$-scalar product on $\omega$. When $\omega = \Omega$, we
omit the subscript, i.e.\ $(p, q) := (p, q)_{\Omega}$. For
$d$-dimensional vector valued functions $\vec u$ and $\vec v$, we
define $(\vec u, \vec v)_{\omega} := \int_\omega \vec u \cdot \vec v $
with $(\vec u, \vec v) = (\vec u, \vec v)_{\Omega}$. 
Observe that we use the same notation for norms and scalar products in
$L^2$ without distinguishing between scalar and vector valued functions.
This is purely for simplicity, since the appropriate definition is always clear from the context.
We use, however, bold face letters for vector valued quantities. 

In the following, we
define the Sobolev space of
functions with $L^2$-bounded weak divergence by
$H(\mathrm{div},\omega) := \{ \vec v \in
[L^2(\omega)]^d : \nabla \cdot \vec v \in L^2(\omega) \}$. We equip this
space with the usual
norm $\| \cdot \|_{H(\mathrm{div}, \omega)}$, where $\| \vec v \|_{H(\mathrm{div}, \omega)}^2 := \| \nabla \cdot \vec v \|_{L^2(\omega)}^2 + \|\vec v\|_{L^2(\omega)}^2$. 
Additionally, for $\omega=\Omega$, we introduce the
subspace $\Hzdiv := \{ \vec v \in H(\mathrm{div},\Omega) : \vec v
\cdot \vec n|_{\partial \Omega} = 0 \}$ of functions with zero flux on
the boundary, where $\vec v \cdot \vec n|_{\partial \Omega}$ should be
interpreted in the sense of traces. We denote by $L^2(\Omega)/\R := \{
q \in L^2(\Omega) : \int_\Omega q = 0\}$ the quotient space of
$L^2(\Omega)$ by $\R$. The continuous dual space of a Banach space $X$
is denoted by $X'$.

\subsection{Continuous problem}
With these definitions we are ready to state the continuous problem,
which is Poisson's equation in mixed form with Neumann boundary
conditions on the full boundary.

\begin{definition}[Continuous problem]
  Find $\vec{u} \in V := \Hzdiv$, $p \in Q := \Ltwo/\R$ such that
\begin{equation}
  \label{eq:mixed}
  \begin{aligned}
    \left({\vec A}^{-1} \vec u, \vec v \right) + (\nabla \cdot \vec v, p) & = 0, \\
    \left(\nabla \cdot \vec u, q\right) & = -(f, q),
  \end{aligned}
\end{equation}
for all $\vec v \in V$, $q \in Q$.
\end{definition}
We pose the following assumptions on the coefficient and data.
\begin{assumption}[Assumptions on coefficients, data and domain]$\\$\vspace{-15pt}
  \label{ass:data}
  \begin{enumerate}
  \item[{\upshape (A1)}] ${\vec A} \in [L^{\infty}(\Omega)]^{d \times d}$ is
    a diffusion coefficient, possibly with rapid fine scale
    variations. Its value is an almost everywhere symmetric matrix and
    bounded in the sense that there exist real numbers $\alpha$ and
    $\beta$ such that for almost every $x$ and any $\vec v \in
    \R^d/\{0\}$
    \begin{equation*}
      0 < \alpha \le \frac{({\vec A}(x)^{-1}\vec v)\cdot\vec v}{\vec v\cdot\vec v} \le \beta < \infty.
    \end{equation*}
  \item[{\upshape (A2)}] $f \in L^2(\Omega)$ is a source function that
    fulfills the compatibility condition $\int_\Omega f = 0$.
  \item[{\upshape (A3)}] The domain $\Omega$ is a bounded Lipschitz domain with
    polygonal (or polyhedral) boundary.
  \end{enumerate}
\end{assumption}
We introduce the following bilinear forms and norms. Let
\begin{equation*}
  a(\vec u, \vec v) :=
  ({\vec A}^{-1} \vec u, \vec v)\quad \text{ and } \quad b(\vec v, q) := (\nabla \cdot \vec v,
  q)
\end{equation*}
and, further,
\begin{equation*}
\| \vec v \|_V := \| \vec v \|_{\Hdiv}\quad \text{ and } \quad\| q \|_Q := \| q \|_{L^2(\Omega)}.
\end{equation*}
The energy norm is defined as the following weighted flux $L^2$-norm,
\begin{equation*}
\vertiii{\vec v}^2 := \| {\vec A}^{-1/2} \vec v \|_{L^2(\Omega)}^2 = a(\vec v, \vec v)
\end{equation*}
The energy norm can be subscripted with a subdomain $\omega \subseteq
\Omega$, for example $\vertiii{\cdot}^2_\omega$, to indicate that the
integral is taken only over that subdomain.

The following lemma gives the conditions for existence and uniqueness
of a solution to the mixed formulation in \eqref{eq:mixed} for
subspaces $\mathcal{V} \subseteq V$ and $\mathcal{Q} \subseteq
Q$. This lemma is useful for establishing existence and uniqueness for
all discretizations presented in this paper, since all presented
discretizations are conforming.

\begin{lemma}[Existence and uniqueness of solution to mixed formulation]
  \label{lem:existence}
  Let $\mathcal{V} \subseteq V$ and $\mathcal{Q} \subseteq Q$. Denote
  by $\mathcal{K} = \{\vec v \in \mathcal{V} : b(\vec v, q) = 0 \ 
  \forall q \in \mathcal{Q}\}$. If $a(\cdot,\cdot)$ is coercive on $\mathcal{K}$
  with constant $\tilde \alpha > 0$, i.e.\ $a(\vec v, \vec v) \ge \tilde \alpha \|
  \vec v \|_{V}^2$ for $\vec v \in \mathcal{K}$, and bounded with
  constant $\tilde \beta > 0$, i.e. $|a(\vec v, \vec w)| \le \tilde \beta \| \vec
  v\|_V \| \vec w\|_V$ for all $\vec v, \vec w \in \mathcal{V}$, and
  additionally $b(\cdot,\cdot)$ is inf-sup stable with constant $\tilde \gamma > 0$,
  i.e.\
  \begin{equation*} 
    \inf_{q \in \mathcal{Q}} \sup_{\vec v \in \mathcal{V}} \frac{b(\vec v, q)}{\| \vec v \|_V \| q \|_Q} \ge \tilde \gamma,
  \end{equation*}
  then the problem $a(\vec u, \vec v) + b(\vec v, p) - b(\vec u, q) =
  (f, q)$ for all $(\vec v, q) \in \mathcal{V} \times \mathcal{Q}$ has
  a unique solution $(\vec u, p) \in \mathcal{V} \times
  \mathcal{Q}$ bounded by
  \begin{equation*}
      \| \vec u \|_V \le \frac{2\tilde \beta^{1/2}}{\tilde \alpha^{1/2}\tilde \gamma}\|f\|_{L^2(\Omega)} \quad\text{and}\quad \| p \|_Q \le \frac{\tilde \beta}{\tilde \gamma^2}\|f\|_{L^2(\Omega)}.
  \end{equation*}
\end{lemma}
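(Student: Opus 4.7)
This is Brezzi's classical saddle-point theorem applied to the conforming pair $(\mathcal{V}, \mathcal{Q}) \subseteq (V, Q)$, and my plan is to follow the standard three-step argument: use the inf-sup condition to decompose the velocity into a divergence-carrying lift plus a divergence-free correction, solve for the correction by Lax--Milgram on $\mathcal{K}$, and recover the pressure via a second application of inf-sup.

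In the first step, I would use that the inf-sup condition on $b$ with constant $\tilde\gamma$ is equivalent (by the closed range theorem) to surjectivity of the divergence operator $B:\mathcal{V}\to\mathcal{Q}'$ admitting a right inverse of norm $\tilde\gamma^{-1}$. This delivers a particular $\vec u_0 \in \mathcal{V}$ with $b(\vec u_0, q) = -(f, q)$ for all $q \in \mathcal{Q}$ and $\|\vec u_0\|_V \leq \tilde\gamma^{-1} \|f\|_{L^2(\Omega)}$. In the second step, I would seek $\vec u_1 \in \mathcal{K}$ such that $a(\vec u_1, \vec v) = -a(\vec u_0, \vec v)$ for all $\vec v \in \mathcal{K}$; this is exactly the setting of Lax--Milgram on the closed subspace $\mathcal{K}$, using coercivity of $a$ on $\mathcal{K}$ with constant $\tilde\alpha$ and global continuity on $\mathcal{V}$ with constant $\tilde\beta$. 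Setting $\vec u := \vec u_0 + \vec u_1$ then satisfies the divergence equation against every $q \in \mathcal{Q}$, and the momentum equation against every $\vec v \in \mathcal{K}$.

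For the pressure, observe that the functional $\vec v \mapsto -a(\vec u, \vec v)$ vanishes on $\mathcal{K}$ by construction of $\vec u_1$, so it descends to a bounded functional on $\mathcal{V}/\mathcal{K}$. The inf-sup condition is equivalent to the induced adjoint $B^\ast:\mathcal{Q}\to(\mathcal{V}/\mathcal{K})'$ being a bijection with inverse of norm $\tilde\gamma^{-1}$, which yields a unique $p \in \mathcal{Q}$ satisfying the full first mixed equation $a(\vec u, \vec v) + b(\vec v, p) = 0$ on all of $\mathcal{V}$, with $\tilde\gamma\|p\|_Q \leq \tilde\beta\|\vec u\|_V$. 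Uniqueness of $(\vec u, p)$ reduces to the homogeneous case: the constraint forces $\vec u \in \mathcal{K}$, whence coercivity gives $\vec u = 0$; then $b(\vec v, p) = 0$ for all $\vec v \in \mathcal{V}$, and inf-sup yields $p = 0$.

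The argument is routine; the mildly delicate point is extracting the precise constants in the statement. To sharpen the naive bound $(1 + \tilde\beta/\tilde\alpha)\tilde\gamma^{-1}\|f\|$ into the quoted $2\tilde\beta^{1/2}/(\tilde\alpha^{1/2}\tilde\gamma)$, I would estimate $\vec u_1$ using the energy seminorm $\tnorm{\cdot}$ rather than $\|\cdot\|_V$ directly, exploiting that $\tnorm{\vec v}^2 \leq \tilde\beta\|\vec v\|_V^2$ globally while $\tnorm{\vec v}^2 \geq \tilde\alpha \|\vec v\|_V^2$ on $\mathcal{K}$, and applying the $a$-Cauchy--Schwarz inequality before converting back to $V$-norms. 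I expect this bookkeeping, rather than any conceptual difficulty, to be the main work; everything else is an immediate consequence of Lax--Milgram plus closed-range duality.
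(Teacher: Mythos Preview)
Your proposal is correct and is precisely the classical Brezzi argument; the paper does not supply its own proof but simply cites \cite[Theorem 4.2.3]{BoBrFo13}, whose proof proceeds along the same lines you outline. Your careful tracking of the constants via the energy seminorm is also in the spirit of that reference, so there is no substantive difference to discuss.
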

\begin{proof}
  See e.g.\ \cite[Theorem 4.2.3]{BoBrFo13}.
\end{proof}
Under Assumptions~(A1)--(A3), the conditions for
Lemma~\ref{lem:existence} are satisfied for $\mathcal{V} = V$ and
$\mathcal{Q} = Q$ with $\tilde \alpha = \alpha$, $\tilde \beta =
\beta$ and $\tilde \gamma$ being a constant that depends only on the
computational domain. The lemma then yields a unique solution to the continuous
problem \eqref{eq:mixed}.

\subsection{Discretization with the Raviart--Thomas element}

Regarding the discretization, we introduce two conforming families of
simplicial (i.e.\ triangular or tetrahedral) meshes $\{\trih\}$ and $\{\triH\}$ of $\Omega$ where $h$ and $H$ are
the maximum element diameters. Throughout the paper 
we refer to
$\trih$ as the fine mesh and 
to $\triH$ as the coarse mesh.
Hence, we indirectly assume $h \ll H$. We pose the following assumptions on the
meshes.
\begin{assumption}[Assumptions on meshes]$\\$\vspace{-15pt}
  \label{ass:mesh}
\begin{enumerate}
\item[{\upshape (B1)}] The fine mesh $\trih$ is the result of one or more
  conforming (but possibly non-uniform) refinements of the coarse mesh
  $\triH$ such that $\trih \cap \triH = \emptyset$.
\item[{\upshape (B2)}] Both meshes $\trih$ and $\triH$ are shape regular. In particular the
  positive shape regularity constant $\rho$ for the coarse mesh
  $\triH$ will be referred to below and is defined as $\rho =
  \min_{T\in\triH} \frac{\diam B_T}{\diam T}$ where $B_T$ is the
  largest ball contained in the element $T\in\triH$.
\item[{\upshape (B3)}] The coarse family of meshes $\{\triH\}$ is
  quasi-uniform, whereas $\{\trih\}$ could be obtained from an
  arbitrary adaptive refinement.
\end{enumerate}
\end{assumption}
\begin{remark}[Quadrilateral or hexahedral elements]
  Affine quadrilateral (or hexahedral) elements can also be
  used. However, the definition of the Raviart--Thomas element
  presented below in this paper is based on triangular (or
  tetrahedral) meshes.
\end{remark}
We denote by $t$ and $T$ an element of $\trih$ or $\triH$,
respectively. Similarly $e$ and $E$ denote an edge (for $d=2$) or a face (for $d=3$) of
the elements of $\trih$ and $\triH$. Further, $\vec n_e$ (respectively $\vec n_E$) is
the outward normal vector of an edge (or face) $e$ (respectively $E$). We continue this
section by discussing finite element discretizations using the two meshes.

We denote all polynomials of degree $\le k$ on a subdomain $\omega$ by
$\oPoly{k}{\omega}$ and a $d$-dimensional vector of such polynomials
by $[\oPoly{k}{\omega}]^d$. We introduce the $\Hzdiv$-conform\-ing
lowest (zeroth) order Raviart--Thomas finite element. For each fine
element $t \in \trih$ and coarse element $T \in \triH$, the spaces of
Raviart--Thomas shape functions are given by
\begin{equation*}
  \begin{aligned}
  \RT_{h}(t) &= \{\vec v|_t = [\oPoly{0}{t}]^d + x \oPoly{0}{t}\} \text{ and} \\
  \RT_{H}(T) &= \{\vec v|_T = [\oPoly{0}{T}]^d + x \oPoly{0}{T}\},
  \end{aligned}
\end{equation*}
respectively, where $x = (x_1, \ldots, x_d)$ is the space
coordinate vector. The Raviart--Thomas finite element spaces on
$\trih$ and $\triH$ are then defined as
\begin{equation*}
  \begin{aligned}
  \Vh &= \{\vec v \in \Hzdiv : \vec v|_t \in \RT_h(t) \quad\forall t\in\trih\} \text{ and} \\
  V_H &= \{\vec v \in \Hzdiv : \vec v|_T \in \RT_H(T) \quad\forall T\in\triH\}.
  \end{aligned}
\end{equation*}
The degrees of freedom (in the coarse and fine Raviart--Thomas spaces)
are given by the averages of the
normal fluxes over the edges (respectively faces for $d=3$).
We denote the degrees of freedom by
\begin{equation*}
  \begin{aligned}
    N_e(\vec v) := \frac{1}{|e|} \int_e \vec v \cdot \vec n_e \quad \text{and} \quad N_E(\vec v) := \frac{1}{|E|} \int_E \vec v \cdot \vec n_E
  \end{aligned}
\end{equation*}
for the fine and coarse discretization, respectively. 
The direction of the normal $\vec n_e$ (respectively $\vec n_E$)
can be fixed arbitrarily for each edge (respectively face). Here, $N_e$ and $N_E$ are
bounded linear functionals on the space $W := \Hzdiv \cap
L^s(\Omega)$, for some $s > 2$. Note, that the additional regularity (i.e.\ 
$L^s(\Omega)$ for $s>2$) is necessary for the edge integrals to be
well-defined (cf. \cite{BoBrFo13}).
We introduce the (standard) nodal Raviart--Thomas interpolation
operators $\Ih : W \rightarrow \Vh$ and $\IH : W \rightarrow V_H$ by
fixing the degrees of freedom in the natural way, i.e.\ $\Ih$ and $\IH$
are defined such that
\begin{equation*}
  \begin{aligned}
    N_e(\Ih \vec v) = N_e(\vec v) \quad \text{and} \quad N_E(\IH \vec v) = N_E(\vec v).
  \end{aligned}
\end{equation*}
Additionally, we let $Q_H \subset \Qh \subset Q$ be the space of all
piecewise constant functions on $\triH$ and $\trih$ with zero
mean. We denote by $P_h$ and $P_H$ the $L^2$-projections onto $Q_h$
and $Q_H$, respectively. Using the fine spaces, we define the fine scale
discretization of \eqref{eq:mixed}, which will be referred to as the
reference problem.

\begin{definition}[Reference problem]
Find $\vec u_h \in \Vh$ and $p_h \in \Qh$, such that
\begin{equation}
  \label{eq:mixeddiscrete}
  \begin{aligned}
    a(\vec u_h, \vec v_h) + b(\vec v_h, p_h) & = 0, \\
    b(\vec u_h, q_h) & = -(f, q_h),
  \end{aligned}
\end{equation}
for all $\vec v_h \in \Vh$ and $q_h \in \Qh$.
\end{definition}

A similar problem can be stated with the coarse spaces $V_H$ and $Q_H$
with flux solution $\vec u_H$. The remainder of this section treats
only the fine discretization. However, all results hold also for the
coarse discretization.

We denote the space of divergence free functions on the fine grid by
\begin{equation}
  \label{eq:kh}
  \Kh := \{\vec v \in \Vh : \nabla \cdot \vec v = 0\}.
\end{equation}
\begin{remark}[Kernel of divergence operator]
  \label{rem:kernel}
  A natural definition of $\Kh$ for our purposes is $\Kh = \{
  \vec v \in \Vh : (\nabla \cdot \vec v, q_h) = 0\ \forall q_h \in Q_h
  \}$. However, since we have $\nabla \cdot \vec v \in Q_h$ for all
  $\vec v \in \Vh$ (due to the definition of the Raviart--Thomas
  element), we can characterize $\Kh$ equivalently as done in
  \eqref{eq:kh}.
\end{remark}

To establish existence and uniqueness of a solution to the reference
problem, we use that $\Pi_h$ is divergence compatible, i.e.\ we have
the commuting property $\nabla \cdot \Pi_h \vec v = P_h \nabla \cdot
\vec v$ for $\vec v \in W$, and that $\Pi_h$ is bounded on $W$ (but
not on $V$!), i.e.\ there exists a generic $h$-independent constant
$C_W$ such that $\| \Pi_h \vec v \|_V \le C_W \| \vec v \|_W$ for
$\vec v \in W$. Using this, the inf-sup stability of $b(\cdot,\cdot)$
with respect to $V_h$ and $Q_h$ follows: For $q \in Q_h$,
\begin{equation}
  \begin{aligned}
    \sup_{\vec v \in V_h} \frac{b(\vec v, q)}{\| \vec v \|_V } & = 
    \sup_{\vec v \in W} \frac{(\nabla \cdot \Pi_h \vec v, q)}{\| \Pi_h \vec v \|_V } \ge
    \sup_{\vec v \in W} \frac{(\nabla \cdot \vec v, q)}{C_W\| \vec v \|_W } \\
    &\ge \frac{(\nabla \cdot \vec w, q)}{C_W\| \vec w \|_W } \ge
    \frac{(q, q)}{C_WC_{\Omega}\| q \|_{L^2(\Omega)} } = C_W^{-1}C_{\Omega}^{-1} \| q \|_{L^2(\Omega)},\\
  \end{aligned}
\end{equation}
where $\vec w \in W$ is chosen such that $\nabla \cdot \vec w = q$ and
$\| \vec w \|_W \le C_{\Omega} \| q \|_{L^2(\Omega)}$. This is
possible by letting $\vec w = \nabla \phi$ for a solution $\phi$ to
$\Delta \phi = q$ in $\Omega$ with homogeneous Neumann boundary
conditions. Now, applying Lemma~\ref{lem:existence} with $\mathcal{V}
= V_h$, $\mathcal{Q} = Q_h$, $\mathcal{K} = K_h$, we can derive the
constants $\tilde \alpha = \alpha$, $\tilde \beta = \beta$ and $\tilde
\gamma = \gamma := C_W^{-1}C_{\Omega}^{-1}$ and establish existence
and uniqueness of a solution to the reference problem
\eqref{eq:mixeddiscrete}. Note that the inf-sup stability constant
$\gamma$ is independent of $h$ and hence also holds for the pair of
spaces $V_H$ and $Q_H$.

In the following, we are mainly interested in approximating the flux
component $\vec u_h$ of the solution. We treat $\vec u_h$ as a
reliable reference to the exact solution.  Note that the $L^2$-norm of
the divergence error is controlled by the data
\begin{equation*}
  \begin{aligned}
    \| \nabla \cdot \vec u - \nabla \cdot \vec u_h \|_{\Ltwo} & \le  \| f - P_h f \|_{\Ltwo}. \\
  \end{aligned}
\end{equation*}
For the energy norm of the flux error, we have the following error
estimate in the energy norm for the lowest order Raviart--Thomas element:
\begin{equation*}
  \begin{aligned}
    \vertiii{\vec u - \vec u_h} & \le C h | \vec u |_{H^{1}(\Omega)},
  \end{aligned}
\end{equation*}
where $C$ is independent of $h$.
For a problem with a coefficient ${\vec A}$
that has
fast variations
at a scale of size $\epsilon$, we have in
general that $| \vec u |_{H^{1}(\Omega)} \approx \epsilon^{-1}$. 
Hence, we require $h\ll \epsilon$ before we can observe the linear convergence
in $h$ numerically. We call the regime with $h\ge \epsilon$ a pre-asymptotic
regime. The goal of this work is the construction of a discrete space
which does not suffer from such pre-asymptotic effects triggered by ${\vec A}$.
In the following, we 
assume that the fine mesh is fine enough (in the sense that $h \ll \epsilon$) so that
$\vertiii{\vec u - \vec u_h}$ is sufficiently small and hence $\vec u_h$ a sufficiently accurate
reference solution. With the same
argument, the accuracy of the coarse solution $\vec u_H$ will not be
satisfying as long as $H > \epsilon$. Note that reference 
problem \eqref{eq:mixeddiscrete} never needs to be solved. It just serves as
a reference. 

In the next section, we will construct the ideal multiscale space of
the same (low) dimension as $V_H$, but which yields approximations
that are of similar accuracy as the reference solution $\vec u_h$ (in
particular in the regime $H \gg \epsilon$). Throughout the paper, we
do not consider errors that arise from numerical quadrature.  For
simplicity, we assume that all integrals can be computed exactly.

\section{Ideal multiscale problem}

In this section, we construct a low dimensional space that can capture
the fine scale features of the true multiscale solution. We focus on
constructing a good multiscale representation of the flux solution
$\vec u$ only. We call it ideal since the reference flux solution is
in this space for all $f \in Q_H$. This should be contrasted to a
localized multiscale space to be introduced in
Section~\ref{sec:localized}. In addition to the spaces $V_h$ and $V_H$
defined above we introduce the following detail space as the
intersection of the fine space and the kernel of the coarse
Raviart--Thomas interpolation operator,
\begin{equation*}
  \begin{aligned}
    \Vhf & = \{\vec v \in \Vh : \Pi_H \vec v = 0 \}. \\
  \end{aligned}
\end{equation*}
Since $\Vhf$ is the kernel of a projection, it induces the splitting
$\Vh = V_H \oplus \Vhf$, where $V_H$ is low dimensional and $\Vhf$ is
high dimensional. We refer to $\Vhf$ as the detail space. 
In this section we aim at constructing a 
modified splitting, where $V_H$ is replaced by a multiscale
space which incorporates fine scale features.

\subsection{Ideal multiscale space}
We will construct the ideal multiscale space by applying fine scale
correctors to all coarse functions in $V_H$, i.e.\ so that 
$(\mbox{Id} - G_h)(V_H)$
is the desired multiscale space for a linear corrector operator
$G_h$. The corrector operator is constructed using information from
the coefficient ${\vec A}$, and has divergence free range in order to keep the
flux conservation property of the coarse space.

The definition of the corrector requires us to construct the splitting
$K_h = K_H \oplus \Khf$ with
\begin{equation*}
  \begin{aligned}
    \Khf &:= \{ \vec v \in K_h : \Pi_H \vec v = 0 \}, \quad 
    \mbox{and} \quad
    K_H := \mbox{Range}((\Pi_H)_{\vert K_h}).
  \end{aligned}
\end{equation*}
Next, we introduce an ideal corrector operator. We distinguish
between local (element-wise) correctors and a global corrector.
\begin{definition}[Ideal corrector operators]
  \label{def:idealcorrector}
 Let $a^T(\vec u, \vec v) := ({\vec A}^{-1} \vec u, \vec v)_T$ for $T \in \triH$.
 For each such $T \in \triH$, we define an \emph{ideal element corrector
    operator} $G^T_h : V \to \Khf$ by the equation
  \begin{equation}
    \label{eq:idealcorrector}
    a(G^T_h\vec v, \vec \vf) = a^T(\vec v, \vec \vf)
  \end{equation}
  for all $\vec \vf \in \Khf$. Furthermore, we define the
  \emph{ideal global corrector operator} by summing the local contributions, i.e.\ $G_h := \sum_{T\in\triH} G^T_h$.
\end{definition}
The ideal corrector operators are well-defined since equation
\eqref{eq:idealcorrector} is guaranteed a unique solution by the
Lax--Milgram theorem due to the coercivity and boundedness of
$a(\cdot,\cdot)$ on $\Khf$. Using the ideal global corrector operator, we can define the
discrete multiscale function space by
\begin{equation*}
  \label{eq:vmsh-definition}
  \VmsH := (\mbox{Id}-G_h)(V_H),
\end{equation*}
where $\mbox{Id}$ is the identity operator. This space has the same
dimension as $V_H$. Furthermore, it allows for the splitting $\Vh =
\VmsH \oplus \Vhf$. Note that the ideal multiscale space is the orthogonal
complement of $\Khf$ with respect to $a(\cdot,\cdot)$, i.e.\
\begin{equation}
  \label{eq:idealorthogonality}
  a(\vmsH, \vec \vf) = 0
\end{equation}
for all $\vmsH \in \VmsH$ and $\vec \vf \in \Khf$.

\subsection{Ideal multiscale problem formulation}

In this section, we use the previously defined ideal multiscale space
to define a (preliminary) multiscale approximation.  The ideal
multiscale problem reads as follows.

\begin{definition}[Ideal multiscale problem]
  Find $\umsH \in \VmsH$ and $\pH \in Q_H$, such that
  \begin{equation}
    \label{eq:msmixedbilinear}
    \begin{aligned}
      a(\umsH, \vec v_h) + b(\vec v_h, \pH) & = 0, \\
      b(\umsH, q_H) & = -(f, q_H),
    \end{aligned}
  \end{equation}
  for all $\vec v_h \in \VmsH$ and $q_H \in Q_H$.
\end{definition}

\begin{lemma}[Unique solution of the ideal multiscale problem]
  \label{lem:uniqueideal}
  Under Assumptions~(A1)--(A3) and (B1)--(B3), the ideal
  multiscale problem \eqref{eq:msmixedbilinear} has a unique
  solution. In particular, we have 
  \begin{align*}
    \gamma(1 + \alpha^{-1}\beta)^{-1} & \le \inf_{q \in Q_H}\sup_{\vec v \in \VmsH} \frac{b(\vec v, q)}{\norm{q}_{Q} \norm{\vec v}_V},
  \end{align*}
  i.e.\ inf-sup stability independent of $h$ and $H$.
\end{lemma}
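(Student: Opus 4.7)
The plan is to apply Lemma~\ref{lem:existence} with $\mathcal{V} = \VmsH$ and $\mathcal{Q} = Q_H$. Boundedness of $a(\cdot,\cdot)$ with constant $\tilde\beta = \beta$ is immediate from (A1) since $\VmsH \subset V$. For coercivity, I would first identify the kernel
$\KmsH = \{\vec v \in \VmsH : b(\vec v, q_H) = 0 \text{ for all } q_H \in Q_H\}$.
For any $\vec v = \vec v_H - G_h \vec v_H \in \VmsH$ (with $\vec v_H \in V_H$), the range of $G_h$ lies in $\Khf$, so $G_h \vec v_H$ is divergence-free; hence $\nabla \cdot \vec v = \nabla \cdot \vec v_H \in Q_H$. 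Testing against $q_H = \nabla \cdot \vec v$ shows that $\vec v \in \KmsH$ forces $\nabla \cdot \vec v = 0$. On this kernel $\|\vec v\|_V = \|\vec v\|_{L^2(\Omega)}$ and (A1) delivers $a(\vec v, \vec v) \ge \alpha \|\vec v\|_V^2$, giving $\tilde\alpha = \alpha$.

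The main obstacle is the quantitative inf-sup estimate. Given $q \in Q_H$, I would exploit the inf-sup stability of the coarse pair $(V_H, Q_H)$ noted in the preceding text (with constant $\gamma$) to pick $\vec v_H \in V_H$ satisfying $b(\vec v_H, q) = \|q\|_Q^2$ and $\|\vec v_H\|_V \le \gamma^{-1}\|q\|_Q$. Setting $\tilde{\vec v} := \vec v_H - G_h \vec v_H \in \VmsH$, divergence-freeness of $G_h \vec v_H$ yields $b(\tilde{\vec v}, q) = b(\vec v_H, q) = \|q\|_Q^2$, so the only remaining task is to control $\|\tilde{\vec v}\|_V$ in terms of $\|\vec v_H\|_V$.

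For that step, observe that $G_h \vec v_H$ is the $a$-orthogonal projection of $\vec v_H$ onto $\Khf$, since by definition $a(G_h \vec v_H, \vec v^{\mathrm f}) = \sum_T a^T(\vec v_H, \vec v^{\mathrm f}) = a(\vec v_H, \vec v^{\mathrm f})$ for every $\vec v^{\mathrm f} \in \Khf$. Pythagoras in the energy norm gives $\vertiii{G_h \vec v_H} \le \vertiii{\vec v_H}$, and (A1) converts this into $\|G_h \vec v_H\|_{L^2(\Omega)} \le (\beta/\alpha)^{1/2}\|\vec v_H\|_{L^2(\Omega)}$. Since $G_h \vec v_H$ is divergence-free, $\|G_h \vec v_H\|_V = \|G_h \vec v_H\|_{L^2(\Omega)}$, so triangle inequality and $\beta/\alpha \ge 1$ yield
\begin{equation*}
\|\tilde{\vec v}\|_V \le (1 + (\beta/\alpha)^{1/2})\|\vec v_H\|_V \le (1 + \alpha^{-1}\beta)\|\vec v_H\|_V \le \gamma^{-1}(1 + \alpha^{-1}\beta)\|q\|_Q.
\end{equation*}
Therefore $\sup_{\vec v \in \VmsH} b(\vec v, q)/\|\vec v\|_V \ge b(\tilde{\vec v}, q)/\|\tilde{\vec v}\|_V \ge \gamma(1+\alpha^{-1}\beta)^{-1}\|q\|_Q$, which is both the stated inf-sup estimate and $\tilde\gamma$ needed to invoke Lemma~\ref{lem:existence} and conclude unique solvability.
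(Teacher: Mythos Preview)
Your proof is correct and follows the same route as the paper: verify coercivity on $\KmsH$, then transfer the coarse inf-sup constant to $\VmsH$ via the $V$-stability of $\mbox{Id}-G_h$, and finish with Lemma~\ref{lem:existence}. The only cosmetic difference is that you bound $\|G_h\vec v_H\|_{L^2(\Omega)}$ by Pythagoras in the energy norm (yielding the sharper constant $(\beta/\alpha)^{1/2}$, which you then relax to $\alpha^{-1}\beta$), whereas the paper bounds it directly via $a(G_h\vec v,G_h\vec v)=a(\vec v,G_h\vec v)$ and Cauchy--Schwarz.
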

\begin{proof} 
  We let $\KmsH = \{ \vec v \in \VmsH : \nabla \cdot \vec v =
  0\}$. The coercivity of $a(\cdot,\cdot)$ on $\KmsH$ follows immediately from its
  coercivity on $K_h$ since $\KmsH \subset K_h$.
  The operator $\mbox{Id} - G_h$ is stable in $V$ with
  constant $1 + \alpha^{-1}\beta$, since $\nabla \cdot G_h \vec v = 0$
  and
  \begin{equation*}
    \begin{aligned}
      \norm{G_h\vec v}^2_{\Ltwo} & \le \alpha^{-1} a(G_h\vec v, G_h\vec v) \\
      & = \alpha^{-1} a(\vec v, G_h\vec v) \\
      & \le \alpha^{-1}\beta \norm{\vec v}_{\Ltwo}\norm{G_h \vec v}_{\Ltwo}
    \end{aligned}
  \end{equation*}
  for all $\vec v \in V$. Combining these results with the inf-sup
  stability of $b(\cdot,\cdot)$ on $V_H$ and $Q_H$, we get
\begin{equation}
  \label{eq:infsup}
  \begin{aligned}
    \gamma & \le \inf_{q \in Q_H}\sup_{\vec v \in V_H} \frac{b(\vec v, q)}{\norm{q}_{Q} \norm{\vec v}_V} \\
    & \le (1+\alpha^{-1}\beta)\inf_{q \in Q_H}\sup_{\vec v \in V_H} \frac{(\nabla \cdot (\mbox{Id} - G_h)\vec v, q)}{\norm{q}_{Q} \norm{(\mbox{Id} - G_h)\vec v}_V} \\
    & = (1+\alpha^{-1}\beta)\inf_{q \in Q_H}\sup_{\vec v \in \VmsH} \frac{(\nabla \cdot \vec v, q)}{\norm{q}_{Q} \norm{\vec v}_V},
  \end{aligned}
\end{equation}
i.e.\ $b(\cdot,\cdot)$ is inf-sup stable with constant
$\gamma(1+\alpha^{-1}\beta)^{-1}$ independent of $H$ and $h$. We note
that $\KmsH = \{\vec v \in \VmsH : b(\vec v, q_H) = 0\
\forall q\in Q_H\}$, since $\nabla \cdot \vec v \in Q_H$ (see
Remark~\ref{rem:kernel}). Finally, we apply Lemma~\ref{lem:existence}
with $\mathcal{V} = \VmsH$, $\mathcal{Q} = Q_H$, $\mathcal{K} = \KmsH$
and constants $\tilde \alpha = \alpha$, $\tilde \beta = \beta$ and
$\tilde \gamma = \gamma(1+\alpha^{-1}\beta)^{-1}$.
\end{proof}

\subsection{Error estimate for ideal problem}
In this section, we show
that the flux solution of
the ideal multiscale problem above converges in the energy norm with linear order in $H$ 
to the reference solution. This convergence is independent of the variations of ${\vec A}$,
i.e.\ we do not have any pre-asymptotic effects from the multiscale features.
\begin{lemma}[Error estimate for ideal solution]
  \label{lem:idealerrorestimate}
  Under Assumptions~(A1)--(A3) and (B1)--(B3), let $\vec u_h$
  solve \eqref{eq:mixeddiscrete} and $\umsH$ solve
  \eqref{eq:msmixedbilinear}, then
\begin{equation*}
  \begin{aligned}
    \vertiii{\vec u_h - \umsH} \le{} & \beta^{1/2} C_{\widehat \Pi} C_{\rho,d} H \|f - P_H f\|_{\Ltwo}\\
  \end{aligned}
\end{equation*}
where $C_{\rho,d}$ and $C_{\widehat \Pi}$ are independent of $h$ and $H$.
\end{lemma}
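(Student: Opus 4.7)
The plan is to bound the error $\vec e := \vec u_h - \umsH$ by constructing a function $\vec w \in \Vhf$ that shares the divergence of $\vec e$ and then using the Galerkin and LOD orthogonalities. First I would observe that the reference problem forces $\nabla \cdot \vec u_h = -P_h f$, while the fact that $\nabla \cdot \vec v \in Q_H$ for every $\vec v \in \VmsH$ (because the range of $G_h$ lies in $K_h$) combined with the second line of \eqref{eq:msmixedbilinear} forces $\nabla \cdot \umsH = -P_H f$. Therefore $g := \nabla \cdot \vec e = P_H f - P_h f \in Q_h$ and $\int_T g = 0$ for every $T \in \triH$. Because of this coarse-wise compatibility, on each $T \in \triH$ one can solve an element-local mixed Neumann problem for some $\vec w_T$ in the fine Raviart--Thomas space on $T$ with vanishing normal trace on $\partial T$ such that $\nabla \cdot \vec w_T = g|_T$. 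Gluing these pieces yields $\vec w \in V_h$ whose coarse normal traces vanish, so $\IH \vec w = 0$ and $\vec w \in \Vhf$. A Piola transform to a reference simplex, together with shape regularity of $\triH$, produces the norm estimate
\begin{equation*}
\| \vec w \|_{L^2(\Omega)} \le C_{\widehat \Pi} C_{\rho,d} H \| g \|_{L^2(\Omega)},
\end{equation*}
the factor $H$ arising from the scaling of the $H(\mathrm{div})$-Piola pushforward.

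Next I would exploit the splitting $K_h = \KmsH \oplus \Khf$, which is inherited from $V_h = \VmsH \oplus \Vhf$ after noting that $\nabla \cdot \vec v \in Q_H$ for $\vec v \in \VmsH$ while $P_H(\nabla \cdot \vec v^f) = 0$ for every $\vec v^f \in \Vhf$ (since $\IH \vec v^f = 0$ makes all coarse normal fluxes vanish). Since $\vec e - \vec w \in K_h$, decompose $\vec e - \vec w = \vec \phi_{\mathrm{ms}} + \vec \phi^f$ with $\vec \phi_{\mathrm{ms}} \in \KmsH$ and $\vec \phi^f \in \Khf$. Subtracting \eqref{eq:msmixedbilinear} from \eqref{eq:mixeddiscrete}, both tested against $\vec \phi_{\mathrm{ms}} \in \VmsH \subset V_h$, gives $a(\vec e, \vec \phi_{\mathrm{ms}}) = -(\nabla \cdot \vec \phi_{\mathrm{ms}}, p_h - p_H) = 0$ thanks to $\nabla \cdot \vec \phi_{\mathrm{ms}} = 0$. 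For the fluctuation part, testing \eqref{eq:mixeddiscrete} against $\vec \phi^f \in K_h \subset V_h$ yields $a(\vec u_h, \vec \phi^f) = 0$, and the fundamental LOD orthogonality \eqref{eq:idealorthogonality} applied to $\umsH \in \VmsH$ and $\vec \phi^f \in \Khf$ gives $a(\umsH, \vec \phi^f) = 0$, whence $a(\vec e, \vec \phi^f) = 0$. Summing the two contributions, $a(\vec e, \vec e - \vec w) = 0$ and therefore $a(\vec e, \vec e) = a(\vec e, \vec w)$.

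A single Cauchy--Schwarz step in $a(\cdot,\cdot)$ then yields $\vertiii{\vec e} \le \vertiii{\vec w} \le \beta^{1/2} \| \vec w \|_{L^2(\Omega)}$, using the upper bound in (A1) for the last inequality. Combining with the stability estimate for $\vec w$ and the Pythagorean identity $\| g \|_{L^2(\Omega)}^2 + \| f - P_h f \|_{L^2(\Omega)}^2 = \| f - P_H f \|_{L^2(\Omega)}^2$ (which follows from $Q_H \subset Q_h$ forcing the $L^2$-orthogonality of $g = P_H f - P_h f \in Q_h$ and $f - P_h f \in Q_h^\perp$) produces the claimed bound $\vertiii{\vec e} \le \beta^{1/2} C_{\widehat \Pi} C_{\rho,d} H \| f - P_H f \|_{L^2(\Omega)}$. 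The main obstacle is the rigorous construction of $\vec w$ with the asserted $H$-linear stability: this requires an element-local Fortin-type argument on the reference simplex and a careful Piola scaling that tracks the dependence on $H$ and on the shape-regularity constant $\rho$ of the coarse mesh.
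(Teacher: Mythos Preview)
Your argument is correct but follows a genuinely different route from the paper. The paper parametrizes the solutions by the right-hand side and splits
\[
\vertiii{\vec u_h(f)-\umsH(f)} \le \vertiii{\vec u_h(f)-\vec u_h(P_Hf)} + \vertiii{\vec u_h(P_Hf)-\umsH(P_Hf)} + \vertiii{\umsH(P_Hf)-\umsH(f)},
\]
shows the last two terms vanish (the middle one via the same $a$-orthogonality you use, the last by $f-P_Hf \perp Q_H$), and bounds the first term by a duality argument: it writes $\vertiii{\tilde{\vec u}_h}^2 = (f-P_Hf,\tilde p_h-P_H\tilde p_h)$, introduces an auxiliary Poisson problem $\Delta\phi=\tilde p_h-P_H\tilde p_h$, and then invokes an $L^2$-stable commuting projection $\widehat\Pi_h$ (Christiansen--Winther) to close the estimate. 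Your approach instead builds, coarse-element by coarse-element, an explicit $\vec w\in\Vhf$ with $\nabla\cdot\vec w=\nabla\cdot\vec e$ and $\|\vec w\|_{L^2}\lesssim H\|g\|_{L^2}$, and then uses the splitting $K_h=\KmsH\oplus\Khf$ together with the two orthogonalities to obtain $\vertiii{\vec e}\le\vertiii{\vec w}$. This is more elementary in that it avoids both the auxiliary Poisson problem and the smoothed projection; the price is the local Fortin/right-inverse construction whose $h$-uniform constant you correctly flag as the main point to justify (it follows from the standard $h$-independent discrete inf-sup on the reference simplex and Piola scaling, and uses shape regularity of both $\triH$ and $\trih$). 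Note that in the paper $C_{\widehat\Pi}$ is literally the stability constant of $\widehat\Pi_h$; in your argument the analogous constant comes from the local divergence right-inverse, so reusing the symbol $C_{\widehat\Pi}$ is slightly misleading even though the final form of the bound is identical.
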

\begin{proof}
  Parametrizing the solutions $\vec u_h(f)$ and
  $\umsH(f)$ by the data $f$, we use the triangle inequality to obtain
  \begin{multline*}
    \vertiii{\vec u_h(f) - \umsH(f)} \\
    \le
    \vertiii{\vec u_h(f) - \vec u_h(P_H f)} +
    \vertiii{\vec u_h(P_H f) - \umsH(P_H f)} +
    \vertiii{\umsH(P_H f) - \umsH(f)}.
  \end{multline*}
The two last terms will be shown to equal zero.

For the first term, we proceed in several steps. Let us define $\tilde {\vec u}_h := \vec u_h(f) - \vec u_h(P_Hf) = \vec u_h(f - P_Hf)$,
which is the flux solution for the data $f - P_Hf$. The corresponding pressure solution shall be denoted by $\tilde p_h$. First, we observe
\begin{equation}
  \label{lemma-8-proof-step-1}\begin{aligned}
    \vertiii{\tilde {\vec u}_h}^2 & = (f - P_Hf, \tilde p_h) = (f - P_Hf, \tilde p_h - P_H \tilde p_h) \le \|f - P_H f\|_{\Ltwo} \| \tilde p_h - P_H \tilde p_h \|_{\Ltwo}.
  \end{aligned}
\end{equation}
In order to bound the term $\| \tilde p_h - P_H \tilde p_h \|_{\Ltwo}$, we let
$\phi \in H^1_0(\Omega)$ be the weak solution
to $\Delta \phi = \tilde p_h - P_H \tilde p_h$. Then we have
\begin{equation*}
\begin{aligned}
    | \phi |_{H^1(\Omega)}^2 = (\tilde p_h - P_H \tilde p_h, \phi - P_H \phi) \le C_{\rho,d} H \|\tilde p_h - P_H \tilde p_h\|_{\Ltwo} |\phi|_{H^1(\Omega)}.
  \end{aligned}
\end{equation*}
Defining $\vec w := \nabla \phi$ we get $\nabla \cdot \vec w = \tilde
p_h - P_H \tilde p_h$ and $\| \vec w \|_{\Ltwo} \le C_{\rho,d} H
\|\tilde p_h - P_H \tilde p_h\|_{\Ltwo}$.  Next, we use a pair of
projection operators $\widehat {\Pi}_h : V \to V_h$ and $\widehat P_h
: Q \to Q_h$ that commute with respect to the divergence operator,
allows for $\trih$ to be non quasi-uniform, and where $\widehat
{\Pi}_h$ is $L^2$-stable, i.e.\ $\widehat P_h \nabla \cdot \vec w =
\nabla \cdot \widehat {\Pi}_h \vec w$ and $\| \widehat {\Pi}_h \vec w
\|_{\Ltwo} \le C_{\widehat \Pi} \| \vec w \|_{\Ltwo}$, with
$C_{\widehat \Pi}$ independent of $h$. The existence of such operators
is proved in \cite{ChWi08}. Exploiting this stability
and the fact that $\tilde p_h - P_H \tilde p_h = \widehat P_h (\nabla
\cdot \vec w)$ (since $\widehat P_h$ is a projection on $Q_h$ and
$\tilde p_h - P_H \tilde p_h\in Q_h$), we obtain
\begin{equation*}
  \begin{aligned}
    \| \tilde p_h - P_H \tilde p_h \|_{\Ltwo}^2 & = (\tilde p_h - P_H \tilde p_h, \tilde p_h) 
     = (\widehat P_h (\nabla \cdot \vec w), \tilde p_h) \\
    &= (\nabla \cdot \widehat {\Pi}_h \vec w, \tilde p_h)
    = -({\vec A}^{-1} \tilde {\vec u}_h, \widehat {\Pi}_h \vec w)
     \le \vertiii{\tilde {\vec u}_h} \|{\vec A}^{-1/2} \widehat {\Pi}_h \vec w\|_{\Ltwo} \\
     &\le \beta^{1/2} C_{\widehat \Pi} \vertiii{\tilde {\vec u}_h} \|\vec w\|_{\Ltwo} 
     \le \beta^{1/2} C_{\widehat \Pi} C_{\rho,d} H \vertiii{\tilde {\vec u}_h} \|\tilde p_h - P_H \tilde p_h\|_{\Ltwo}.
  \end{aligned}
\end{equation*}
Combining this estimate with \eqref{lemma-8-proof-step-1} yields
\begin{equation*}
    \vertiii{\tilde {\vec u}_h}^2 \le \beta^{1/2} C_{\widehat \Pi} C_{\rho,d} H \|f - P_H f\|_{\Ltwo} \vertiii{\tilde {\vec u}_h}.
\end{equation*}

For the second term, since the correctors are divergence free, we
have $\nabla \cdot \umsH(P_H f) \in Q_H$. This implies $\nabla \cdot \umsH(P_H f) = -P_H
f$, hence
\begin{equation*}
  \nabla \cdot \umsH(P_H f) - \nabla \cdot \vec u_h(P_H f) = 0,
\end{equation*}
i.e.\ $\umsH(P_H f) - \vec u_h(P_H f) \in K_h$. 
Now, from first the equations in
\eqref{eq:mixeddiscrete} and \eqref{eq:msmixedbilinear} in combination
with the $a(\cdot,\cdot)$-orthogonality between $\VmsH$ and $\Khf$, we get
\begin{equation*}
  \begin{aligned}
    a(\vec u_h(P_H f), \vec v) & = 0, &&\quad \vec v \in \Vh,&& \nabla \cdot \vec v = 0, \text{ and}\\
    a(\umsH(P_H f), \vec v) & = 0, &&\quad \vec v \in \VmsH,&& \nabla \cdot \vec v = 0, \text{ and}\\
    a(\umsH(P_H f), \vec v) & = 0, &&\quad \vec v \in \Vhf,&& \nabla \cdot \vec v = 0.\\
  \end{aligned}
\end{equation*}
Since $\Vh = \VmsH \oplus \Vhf$, we obtain
\begin{equation*}
  \begin{aligned}
    a(\vec u_h(P_H f) - \umsH(P_H f), \vec v) & = 0,\\
  \end{aligned}
\end{equation*}
for all $\vec v \in K_h$. Choosing $\vec v = \vec u_h(P_H f) - \umsH(P_H f)$, we see
that $\vec u_h(P_H f) = \umsH(P_H f)$, thus the second term equals zero.

To show that the third term is zero, it is sufficient to show that
$\umsH(f - P_H f) = 0$. The data $f - P_Hf$ is $L^2$-orthogonal to the
test space $Q_H$ and it enters the equation \eqref{eq:msmixedbilinear}
only in an $L^2$ scalar product with test functions. Hence $\umsH(f -
P_H f) = \umsH(P_H(f - P_H f)) = 0$.
\end{proof}

\section{Localized multiscale method}
\label{sec:localized}
The ideal corrector problems \eqref{eq:idealcorrector} are at least as
expensive to solve as the original reference problem.  Hence, we
require to localize these problems to very small patches, without
sacrificing the good approximation properties.  If we can achieve
this, the corrector problems can be solved with low computational
costs and fully in parallel.  In this section, we show that this is
indeed possible.  We prove that we can truncate the computational
domain $\Omega$ in the local corrector problems
\eqref{eq:idealcorrector} to a small environment of a coarse element
$T$.  This is possible, since the solutions of
\eqref{eq:idealcorrector} decay with exponential rate
outside the coarse element $T$. 
We obtain a new localized corrector operator
which can be used analogously to the ideal corrector operator to
construct a localized multiscale space. This localization reduces the
computational effort for assembling the multiscale space significantly. 
 
In addition to the assumptions (A1)--(A3) and (B1)--(B3), we require
additional assumptions on the computational domain and the mesh. More
precisely we assume the following for the analysis.
  \begin{enumerate}
  \item[{\upshape (A4)}] We consider $d=2$ and a simply-connected domain $\Omega \subset \R^2$.
  \item[{\upshape (B4)}] The fine grid $\mathcal{T}_h$ is quasi-uniform, i.e.\
    the ratio between the maximum and the minimum diameter of a grid
    element is bounded by a generic constant.
  \end{enumerate}
  We note that assumption (A4) is crucial for our proof. Assumption
  (B4) on the other hand could be dropped with a more careful
  analysis. In this case the estimates (and in particular the decay)
  will depend on the inverse of the minimum mesh size of the fine grid
  in a patch $U(T)$. For simplicity of the presentation, we do not
  elaborate this case and restrict ourselves to quasi-uniform meshes,
  i.e.\ to (B4). Note that even though we fix $d=2$, we keep the general notation $d$ to
  illustrate how the results are influenced by the dimension. The localized method can be formulated analogously for $d=3$.

In order to localize the detail space $\Khf$, we use admissible
patches. We call this restriction to patches {\it localization}.  For
each $T \in \T_H$ we pick a connected patch $U(T)$ consisting of
coarse grid elements and containing $T$. More precisely, for positive $k\in
\mathbb{N}$ we define $k$-coarse-layer patches iteratively in the
following way. For all $T\in\T_H$ (which are assumed to be closed sets), we define the element patch
$U_k(T)$ in the coarse mesh $\T_H$ by
\begin{equation}\label{def-patch-U-k}
    \begin{aligned}
      U_0(T) & := T, \\
      U_k(T) & := \bigcup\{T'\in \T_H : T'\cap U_{k-1}(T)\neq\emptyset\}\quad k=1,2,\ldots .
    \end{aligned}
\end{equation}
\begin{figure}[htb]
  \centering
  \begin{subfigure}{.4\textwidth}
    \centering
    \includegraphics[width=\textwidth]{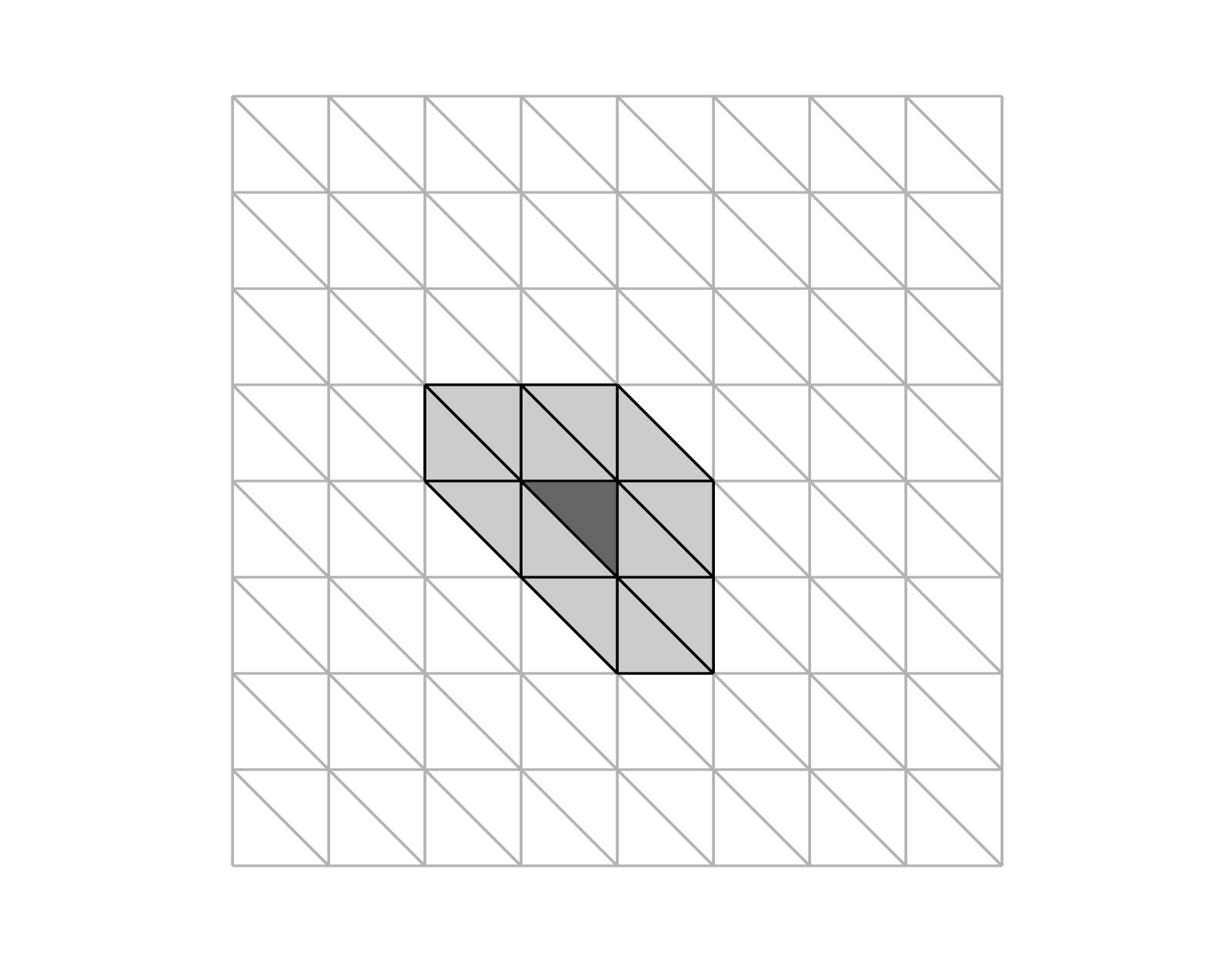}
    \caption{One-coarse-layer patch, $k = 1$.}
  \end{subfigure}
  \hspace{3em}
  \begin{subfigure}{.4\textwidth}
    \centering
    \includegraphics[width=\textwidth]{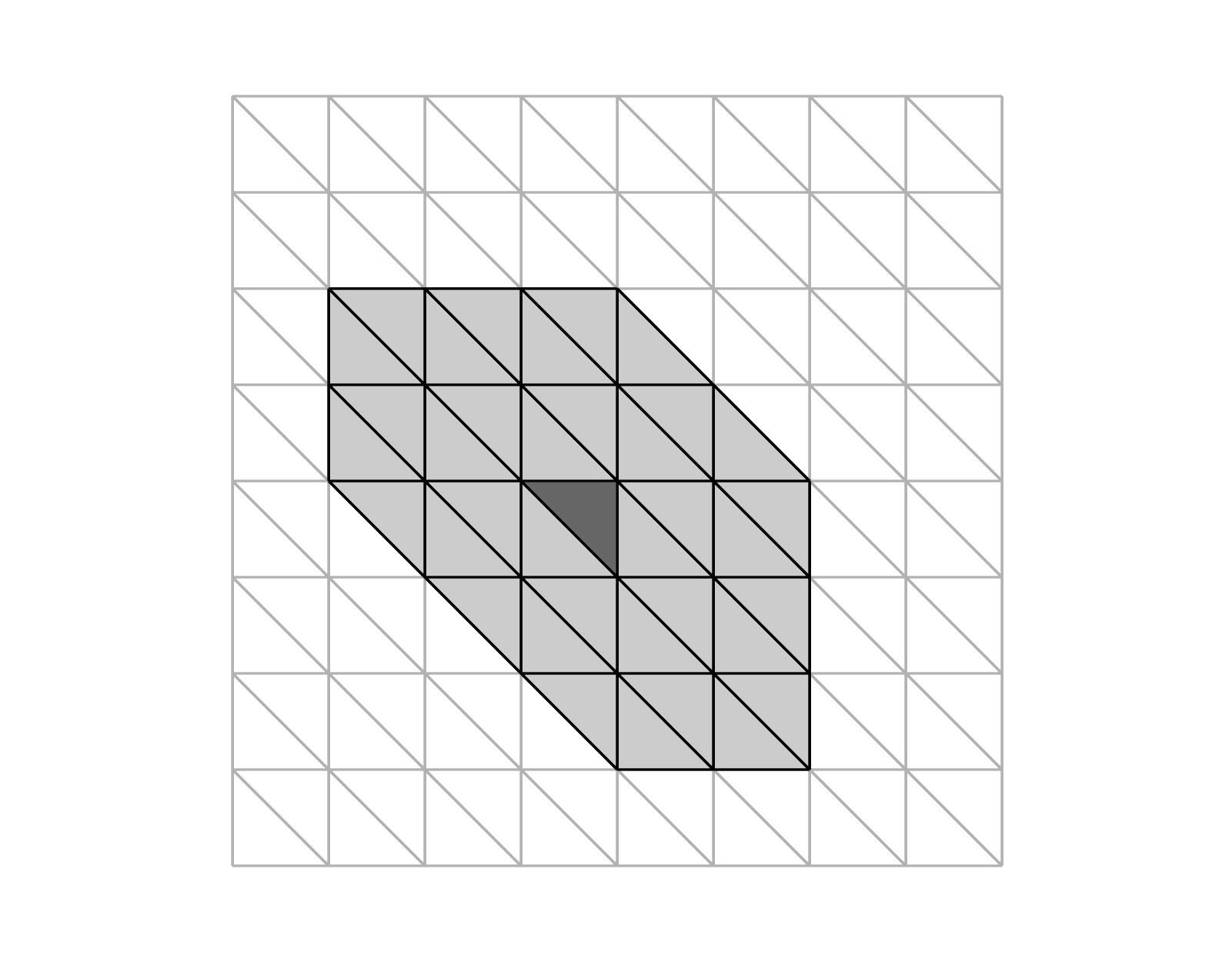}
    \caption{Two-coarse-layer patch, $k = 2$.}
  \end{subfigure}
  \caption{Illustration of $k$-coarse-layer patches. Dark gray subdomain is $T$.
    Light gray subdomain is $U_k(T)$.}
  \label{fig:patch}
\end{figure}
See Figure~\ref{fig:patch} for an illustration of patches. For a
given patch $U(T)$, we define the restriction of $\Vhf$ to $U(T)$ by
$$\oVhf{U(T)}:=\{ \vec w \in \Vhf : \vec w=0 \enspace \mbox{in } \Omega \setminus U(T) \}.$$ 
Accordingly, we also define
$$\oKhf{U(T)}:=\{\vec w \in \oVhf{U(T)} : \hspace{2pt}\nabla \cdot \vec w = 0 \}.$$ 
Using this localized space, we define the localized corrector
operators. Localized quantities are indexed by the patch layer
size $k$.
\begin{definition}[Localized corrector operators]
  \label{def:loccorrector}
  For each $T \in \triH$ and $k \ge 1$ layers, we define a \emph{localized
    element corrector operator} $G^T_{h,k} : V \to \oKhf{U_k(T)}$:
\begin{equation}
  \label{eq:loccorrector}
  a(G^T_{h,k}\vec v, \vec w) = a^T(\vec v, \vec w)
\end{equation}
for all $\vec w \in \oKhf{U_k(T)}$. Further, we define the
\emph{localized global corrector operator} $G_{h,k} := \sum_{T\in\triH}
G^T_{h,k}$.
\end{definition}
The localized corrector operators are again well-defined by 
the Lax-Milgram theorem, exploiting that $a(\cdot,\cdot)$ is a weighted $L^2$-scalar product. 
Note that the definition of
$\oKhf{U_k(T)}$ implies Neumann boundary conditions on the localized
corrector problems \eqref{eq:loccorrector}.  We define a localized
multiscale function space by
\begin{equation*}
  \label{eq:vmshk-definition}
  \VmsHk := (\mbox{Id}-G_{h,k})(V_H)
\end{equation*}
and state the localized multiscale problem as follows.
\begin{definition}[Localized multiscale problem]
  \label{def:locproblem}
The localized multiscale problem reads: find $\umsHk \in \VmsHk$ and
$\pH \in Q_H$, such that
\begin{equation}
  \label{eq:locmixedbilinear}
  \begin{aligned}
    a(\umsHk, \vec v_h) + b(\vec v_h, \pH) & = 0, \\
    b(\umsHk, q_H) & = -(f, q_H),
  \end{aligned}
\end{equation}
for all $\vec v_h \in \VmsHk$ and $q_H \in Q_H$.
\end{definition} 
Definitions~\ref{def:loccorrector} and \ref{def:locproblem} constitute
the proposed multiscale method. Next, we show that the above stated
problem is well-posed.
\begin{lemma}[Unique solution of localized multiscale problem]
  \label{lem:uniquelocalized}
  Under Assumptions~(A1)--(A3) and (B1)--(B3), the localized
  multiscale problem \eqref{eq:locmixedbilinear} has a unique solution
  for all $k$, $h$ and $H$.
\end{lemma}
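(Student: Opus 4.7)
The plan is to reduce to Lemma~\ref{lem:existence} applied with $\mathcal{V} = \VmsHk$, $\mathcal{Q} = Q_H$ and $\mathcal{K} = \KmsHk := \{\vec v \in \VmsHk : \nabla \cdot \vec v = 0\}$, mimicking the proof of Lemma~\ref{lem:uniqueideal}. The first order of business is to make sure $\VmsHk$ is a bona fide subspace of $V_h$ of dimension $\dim V_H$. Since each local corrector satisfies $G^T_{h,k}\vec v \in \oKhf{U_k(T)} \subset \Khf$, the global corrector $G_{h,k}$ has range in $\Khf$; in particular $\nabla \cdot G_{h,k} \vec v = 0$, and $\Pi_H G_{h,k}\vec v = 0$. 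Thus $\mbox{Id} - G_{h,k}$ is injective on $V_H$: if $(\mbox{Id} - G_{h,k})\vec v = 0$ with $\vec v \in V_H$, then $\vec v = G_{h,k}\vec v \in \Khf$, so $0 = \Pi_H \vec v = \vec v$. Moreover, since $G_{h,k}\vec v$ is divergence-free, $\nabla \cdot (\mbox{Id} - G_{h,k})\vec v = \nabla \cdot \vec v$ for every $\vec v \in V_H$, a fact that will be used for inf-sup stability.

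Coercivity of $a(\cdot,\cdot)$ on $\KmsHk$ with constant $\alpha$ is immediate from $\KmsHk \subset K_h \subset V$ and assumption (A1). Boundedness of $a(\cdot,\cdot)$ on $\VmsHk$ with constant $\beta$ is likewise inherited from $V$.

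The nontrivial part is the inf-sup stability of $b(\cdot,\cdot)$ on $\VmsHk \times Q_H$, and this is where the proof departs from that of Lemma~\ref{lem:uniqueideal}. In the ideal case one had the global identity $a(G_h \vec v, G_h \vec v) = a(\vec v, G_h \vec v)$ because $G_h\vec v$ itself is an admissible test function in all the local problems; this identity fails for $G_{h,k}$ because each local problem only admits test functions supported in $U_k(T)$. I would proceed instead element by element: testing \eqref{eq:loccorrector} with $\vec w = G^T_{h,k}\vec v$ gives $\alpha \|G^T_{h,k}\vec v\|_{L^2(\Omega)}^2 \le a(G^T_{h,k}\vec v, G^T_{h,k}\vec v) = a^T(\vec v, G^T_{h,k}\vec v) \le \beta \|\vec v\|_{L^2(T)} \|G^T_{h,k}\vec v\|_{L^2(T)}$, so $\|G^T_{h,k}\vec v\|_{L^2(\Omega)} \le \alpha^{-1}\beta \|\vec v\|_{L^2(T)}$. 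Summing over $T$ and exploiting that each point in $\Omega$ lies in at most $C_{\mathrm{ol},k}$ of the patches $\{U_k(T)\}_{T \in \triH}$, one obtains a $k$-dependent constant $C_k$ with $\|G_{h,k}\vec v\|_{L^2(\Omega)} \le C_k \|\vec v\|_{L^2(\Omega)}$, and together with $\nabla \cdot G_{h,k} = 0$ this gives $\|(\mbox{Id} - G_{h,k})\vec v\|_V \le (1 + C_k) \|\vec v\|_V$.

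With this $V$-stability in hand, the same chain of inequalities as in \eqref{eq:infsup}, using the $h$-independent inf-sup stability of $b$ on $V_H \times Q_H$ and the identity $\nabla \cdot (\mbox{Id} - G_{h,k})\vec v = \nabla \cdot \vec v$, yields an inf-sup constant $\gamma/(1 + C_k) > 0$ for $b$ on $\VmsHk \times Q_H$. Lemma~\ref{lem:existence} then delivers a unique solution for each fixed $k$, $h$, $H$. The main obstacle, as indicated, is the $V$-boundedness of $\mbox{Id} - G_{h,k}$; note that the lemma does not claim the constant to be independent of $k$, so the loss from the overlap argument is acceptable here and the refined decay estimates are deferred to the later error analysis.
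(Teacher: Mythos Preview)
Your proof is correct and follows essentially the same route as the paper: both reduce to Lemma~\ref{lem:existence} via the $V$-stability of $\mbox{Id} - G_{h,k}$, obtained from an element-wise bound on $G^T_{h,k}$ combined with a patch-overlap counting argument, and then feed this into the chain \eqref{eq:infsup}. The paper carries out the element bound in energy norm ($\vertiii{G^T_{h,k}\vec v}\le \vertiii{\vec v}_T$) and expands $a(\sum_T G^T_{h,k}\vec v,\sum_{T'} G^{T'}_{h,k}\vec v)$ directly, arriving at the slightly sharper constant $\alpha^{-1/2}\beta^{1/2}C_\rho^{1/2}k^{d/2}$ in place of your $\alpha^{-1}\beta\,C_{\mathrm{ol},k}^{1/2}$, but this is immaterial for the lemma as stated.
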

\begin{proof}
We use similar arguments as in Lemma~\ref{lem:uniqueideal}.
The basic difference is that we need to show
  stability for the localized corrector operator $G_{h,k}$. 
  We start with the stability 
  of the
  localized element corrector operators. Here we have for arbitrary $\vec v \in V$
\begin{equation}
  \label{eq:GTkstab}
  \begin{aligned}
    \vertiii{G^T_{h,k} \vec v}^2 & = a(G^T_{h,k} \vec v, G^T_{h,k} \vec v)
     = a^T(\vec v, G^T_{h,k} \vec v)
     \le \vertiii{\vec v}_T \vertiii{G^T_{h,k} \vec v}.
  \end{aligned}
\end{equation}
Now, we can prove $L^2$-stability of the localized global operator. We get
\begin{equation*}
  \begin{aligned}
    \|G_{h,k} \vec v \|_{L^2(\Omega)}^2 & = \left\| \sum_{T\in\triH} G^T_{h,k} \vec v \right\|_{L^2(\Omega)}^2 
     \le \alpha^{-1} a\left (\sum_{T\in\triH} G^T_{h,k} \vec v, \sum_{T'\in\triH} G^{T'}_{h,k} \vec v \right)\\
    & = \alpha^{-1} \sum_{T\in\triH} \sum_{T' \subset U_k(T)} a(G^T_{h,k} \vec v,  G^{T'}_{h,k} \vec v )\\
    & \le \frac{1}{2} \alpha^{-1} \sum_{T\in\triH} \sum_{T' \subset U_k(T)} \left( \vertiii{G^T_{h,k} \vec v}^2 + \vertiii{G^{T'}_{h,k} \vec v}^2 \right) \\
    & \le \alpha^{-1} C_{\hspace{-1pt}\rho} k^{d}  \sum_{T\in\triH} \vertiii{G^T_{h,k} \vec v}^2
     \overset{\eqref{eq:GTkstab}}{\le} \alpha^{-1} C_{\hspace{-1pt}\rho} k^{d}  \sum_{T\in\triH} \vertiii{\vec v}_T^2
     \le \alpha^{-1} \beta C_{\hspace{-1pt}\rho} k^{d} \|\vec v\|_{L^2(\Omega)}^2,
  \end{aligned}
\end{equation*}
where $C_{\hspace{-1pt}\rho}$ is a constant only depending on the shape regularity
constant $\rho$ of the coarse mesh. Similar to \eqref{eq:infsup} we
derive inf-sup stability with
\begin{equation*}
  \begin{aligned}
    \gamma & \le \inf_{q \in Q_H}\sup_{\vec v \in V_H} \frac{b(\vec v, q)}{\norm{q}_{Q} \norm{\vec v}_V} \\
    & \le (1+\alpha^{-1/2} \beta^{1/2} C_{\hspace{-1pt}\rho}^{1/2} k^{d/2})\inf_{q \in Q_H}\sup_{\vec v \in \VmsHk} \frac{b(\vec v, q)}{\norm{q}_{Q} \norm{\vec v}_V}.
  \end{aligned}
\end{equation*}
Observe that the inf-sup stability constant $\gamma_k^0 :=
\gamma(1+\alpha^{-1/2} \beta^{1/2} C_{\hspace{-1pt}\rho}^{1/2} k^{d/2})^{-1}$
depends on $k$ this time.
\end{proof}

The inf-sup stability constant $\gamma_k^0$ depends on $k$ due to
overlapping patches. We come back to another estimate of the inf-sup
stability constant in Section~\ref{sec:infsup} after proving the decay
of the correctors.
 
It is important to note that in the localized case we do
not have orthogonality between $\VmsHk$ and $\Khf$ as in the ideal case
(cf.\ equation \eqref{eq:idealorthogonality}). This orthogonality was crucial in
the error estimate for the ideal method presented in
Lemma~\ref{lem:idealerrorestimate}. In the localized case, we rely on
the exponential decay of the localized element correctors, which
justifies localization to patches.

\subsection{Error estimate for localized problem}
In this section we state the main result of this paper, which is an a
priori error estimate in the energy norm between the reference
solution and the localized multiscale approximation. We first present
a logarithmic stability result for the nodal Raviart--Thomas
interpolation operator $\Pi_H$ for fine scale functions and then state
a lemma on the exponential decay of the correctors. Then the
main theorem follows. The proof of the exponential decay is contained in
Section~\ref{sec:decay}. The notation $a \lesssim b$ stands for $a \le
Cb$ with some constant $C$ that might depend on $d$, $\Omega$,
$\alpha$, $\beta$ and coarse and fine mesh regularity constants, but
not on the mesh sizes $h$ and $H$. In particular it does not depend on
the possibly rapid oscillations in ${\vec A}$.

We recall a well known stability result for the nodal Raviart--Thomas interpolation operator.
\begin{lemma}[Logarithmic stability of the nodal interpolation
  operator for divergence free functions]
\label{lemma-wohlmuth}
Assume (B1)--(B4). For any given element $T \in \T_H$ there exists a
constant $C$ that only depends on the regularity of $T$ and the
quasi-uniformity of $\trih$, such that
$$\| \IH \vec v_h \|_{L^2(T)}^2 \le C \lambda(H/h)^2 \| \vec v_h \|_{L^2(T)}^2,$$
with $\lambda(H/h) := (1+\log(H/h))^{1/2}$ for all $\vec v_h \in \Vh$ with $\nabla \cdot \vec v_h = 0$.
\end{lemma}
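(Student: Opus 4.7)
The plan is to reduce the bound to a classical logarithmic stability estimate for the nodal continuous $P_1$ Lagrange interpolation operator in two dimensions, via a stream function representation. Since Assumption~(A4) places us in a simply-connected domain $\Omega\subset\R^2$ and $\vec v_h\in\Vh$ satisfies $\nabla\cdot\vec v_h=0$ with $\vec v_h\cdot\vec n=0$ on $\partial\Omega$, there exists a scalar stream function $\psi_h$, unique up to a constant, with $\vec v_h=\operatorname{curl}\psi_h:=(\partial_2\psi_h,-\partial_1\psi_h)$. Because $\vec v_h$ lies in the lowest-order Raviart--Thomas space on $\trih$, the function $\psi_h$ belongs to the continuous piecewise affine Lagrange space on $\trih$, and one has the isometry $\|\vec v_h\|_{L^2(\omega)}=|\psi_h|_{H^1(\omega)}$ for any measurable $\omega\subseteq\Omega$.

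The next key step is a commutation identity between $\IH$ and the coarse nodal Lagrange interpolator $\mathcal{I}_H$ onto the continuous $P_1$-space over $\triH$. For any coarse edge $E$ with endpoints $A,B$ oriented consistently with $\vec n_E$, Stokes' theorem gives
$$\int_E \vec v_h\cdot \vec n_E = \int_E \operatorname{curl}\psi_h\cdot \vec n_E = \psi_h(A)-\psi_h(B),$$
so $N_E(\vec v_h)$ depends only on the nodal values of $\psi_h$ at the endpoints of $E$. Matching this against the well-known fact that the coarse RT basis function associated with $E$ agrees with $\operatorname{curl}$ of a coarse $P_1$ hat function (up to sign), one obtains the identity
$$\IH \vec v_h = \operatorname{curl}(\mathcal{I}_H \psi_h)\quad\text{on every }T\in\triH.$$
Applying the isometry once more yields $\|\IH\vec v_h\|_{L^2(T)} = |\mathcal{I}_H\psi_h|_{H^1(T)}$, so the claim reduces to
$$|\mathcal{I}_H\psi_h|_{H^1(T)}^2 \le C \lambda(H/h)^2\, |\psi_h|_{H^1(T)}^2.$$

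The principal obstacle, and the source of the logarithm, is this last estimate. It is the well-known two-dimensional phenomenon that the Sobolev embedding $H^1(\R^2)\hookrightarrow C^0$ fails by exactly a logarithm, so pointwise nodal evaluation of a piecewise affine function on a mesh of size $h$ is controlled in $H^1$ only up to a factor $(1+\log(H/h))^{1/2}$. I would invoke the standard estimate of this type established in the literature on mortar and substructuring methods (as indicated by the label of the lemma), which delivers precisely a bound by $C\lambda(H/h)^2$ with $C$ depending solely on the shape regularity of $T$ and the quasi-uniformity of $\trih$ (from Assumption~(B4)). Chaining this with the stream-function reduction above then yields the stated inequality. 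The existence and regularity of $\psi_h$ in the presence of the zero normal-trace boundary condition is routine and requires only the 2D simply-connected setting already assumed.
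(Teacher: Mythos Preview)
Your argument is correct but takes a different route from the paper, which does not give a proof at all: it simply cites \cite[Lemma~4.1]{WoToWi00} and remarks that the result holds for both $d=2$ and $d=3$. Your stream-function reduction is the natural two-dimensional argument: the commutation identity $\IH\vec v_h=\operatorname{curl}(\mathcal I_H\psi_h)$ is correct (it follows directly from matching edge degrees of freedom, since $N_E(\vec v_h)=|E|^{-1}(\psi_h(B)-\psi_h(A))$ and the same holds for $\operatorname{curl}(\mathcal I_H\psi_h)$; your aside that an individual RT basis function equals the curl of a hat function is not literally true, but you do not actually need it), and the final step is precisely the classical discrete Sobolev bound $|\mathcal I_H\psi_h|_{H^1(T)}^2\lesssim(1+\log(H/h))\,|\psi_h|_{H^1(T)}^2$ for fine $P_1$ functions. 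Two comments. First, the estimate is local to a single coarse triangle $T$, so you do not need a global stream function, simple-connectedness of $\Omega$, or the boundary condition $\vec v_h\cdot\vec n=0$; a stream function on the simply-connected triangle $T$ suffices, which brings your hypotheses in line with (B1)--(B4) as stated rather than invoking (A4). Second, your approach is inherently two-dimensional, whereas the cited reference establishes the result in $d=3$ by working directly with the Raviart--Thomas degrees of freedom; this broader validity is presumably why the paper defers to it.
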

A proof for this can be found in \cite[Lemma 4.1]{WoToWi00}. This
result holds for both $d=2$ and $3$. 
\begin{remark}
  \label{rem:clement}
  There exist unconditionally $L^2$-stable Cl\'ement-type
  interpolation operators for which we could define $\lambda(H/h) :=
  1$ for all $h$ and $H$ instead, see \cite{DoFaWi06, Chr05, ChWi08,
    Sch05}. In particular, the operators introduced in \cite{DoFaWi06,
    ChWi08} are projections and were used as a technical tool in the
  proof of Lemma~\ref{lem:idealerrorestimate} above. However, these
  operators are hard to implement in practice and hence are not used
  in the proposed numerical method.
\end{remark}

\begin{lemma}[Exponential decay of correctors]
  \label{trunc-applied}
  Under Assumptions~(A1)--(A4) and (B1)--(B4), there exists a generic
  constant $0 < \theta < 1$ depending on the contrast
  $\beta/\alpha$, but not on $h$ or $H$ such that for all positive
  $k\in\mathbb{N}$:
  \begin{align}
    \vertiii{\sum_{T \in \T_H} \left(G_h^T\vec v-G_{h,k}^T\vec v\right)}^2 \lesssim k^d \lambda(H/h)^2 \theta^{2k/\lambda(H/h)} \sum_{T \in \T_H} \vertiii{G_h^T \vec v}^2
  \end{align}
  for all $\vec v \in V$.
\end{lemma}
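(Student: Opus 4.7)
The plan is to prove decay element-by-element for each local corrector $G_h^T \vec v$ and then sum with a finite-overlap argument to absorb the overall $k^d$ factor. Fix $T \in \T_H$ and set $\phi_k := G_h^T \vec v - G_{h,k}^T \vec v \in \Khf$. Subtracting \eqref{eq:idealcorrector} from \eqref{eq:loccorrector} gives the Galerkin orthogonality
\begin{equation*}
  a(\phi_k, \vec w) = 0 \qquad \text{for all } \vec w \in \oKhf{U_k(T)},
\end{equation*}
so $\phi_k$ is the $a$-best approximation error of $G_h^T\vec v$ from the subspace $\oKhf{U_k(T)}$ (extended by zero). Hence it suffices to exhibit, for a properly chosen test function, a Caccioppoli-type one-step decay of the local energy on annular layers.

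The decisive construction is the cutoff test function, and this is where Assumption~(A4) enters. A direct multiplication $\eta\,G_h^T\vec v$ by a cutoff $\eta$ would destroy the divergence-free condition, so instead I exploit that on a simply-connected $2$D domain every discrete divergence-free Raviart--Thomas field $\vec u \in K_h$ admits a scalar stream-function representation $\vec u = \mathrm{curl}\,\psi$, with $\psi$ in the nonconforming $P_1$ (Crouzeix--Raviart) space. Given the coarse nodal cutoff $\eta$ equal to $0$ on $U_{k-2}(T)$ and $1$ outside $U_{k-1}(T)$ with $\|\nabla \eta\|_\infty \lesssim H^{-1}$, I form $\vec w_0 := \mathrm{curl}(\eta \psi)$, a divergence-free element of $V_h$ supported in $\Omega \setminus U_{k-2}(T)$. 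To land in $\Khf$ I subtract a coarse Raviart--Thomas correction of $\Pi_H \vec w_0$; since $\vec w_0$ is divergence-free, Lemma~\ref{lemma-wohlmuth} controls this correction by $\lambda(H/h)$ times the $L^2$-norm of $\vec w_0$ on the transition annulus $U_{k-1}(T)\setminus U_{k-2}(T)$. The resulting $\vec w \in \Khf$ agrees with $G_h^T \vec v$ outside $U_{k-1}(T)$ up to a localized perturbation, and vanishes on $T \subset U_{k-2}(T)$.

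Testing $a(G_h^T \vec v, \vec w) = 0$ (valid because $\vec w$ is supported outside $T$) and rearranging yields a one-step estimate of the form
\begin{equation*}
  \vertiii{G_h^T \vec v}^2_{\Omega \setminus U_k(T)}
  \le C_0\,\lambda(H/h)^2 \,\vertiii{G_h^T \vec v}^2_{U_k(T)\setminus U_{k-2}(T)}.
\end{equation*}
Absorbing the right-hand-side annulus into the complementary region gives
$\vertiii{G_h^T \vec v}^2_{\Omega \setminus U_k(T)} \le \tilde\theta\, \vertiii{G_h^T \vec v}^2_{\Omega \setminus U_{k-2}(T)}$ with $\tilde\theta = C_0\lambda^2/(1+C_0\lambda^2) < 1$. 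Iterating $\lfloor k/\lambda(H/h)\rfloor$-many times (the $\lambda(H/h)$ slowdown is forced by absorbing the logarithmic factor into the contraction rate) produces
\begin{equation*}
  \vertiii{\phi_k}^2 = \vertiii{G_h^T\vec v}^2_{\Omega \setminus U_k(T)} \lesssim \theta^{2k/\lambda(H/h)}\,\vertiii{G_h^T \vec v}^2
\end{equation*}
for some $\theta \in (0,1)$ depending on $\alpha,\beta$.

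Finally I sum over $T$. The function $\sum_T \phi_k^T$ involves supports $\Omega \setminus U_k(T)$ that overlap, but at any point of $\Omega$ at most $\mathcal{O}(k^d)$ of the patches $U_k(T)$ are active; applying Cauchy--Schwarz on this bounded overlap produces the factor $k^d$. Combining with the single-corrector estimate gives the claimed bound. The main obstacle is the cutoff step: maintaining the divergence-free/$\Khf$ structure of the test function while sacrificing only a logarithmic $\lambda(H/h)$-stability loss; this is exactly why the $2$D simply-connected hypothesis (A4) and the stream-function detour are indispensable, and why the decay exponent is slowed to $\theta^{k/\lambda(H/h)}$ rather than $\theta^{k}$.
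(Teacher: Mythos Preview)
Your overall architecture---stream-function/curl potential to build divergence-free cutoffs in $\Khf$, a one-layer Caccioppoli contraction, iteration, then summation---is exactly the paper's strategy (their skew-symmetric matrix $\psi$ with $\vec w=\nabla\cdot\psi$ is the $2$D stream function in disguise). However, two of your steps do not go through as written.

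\textbf{The summation step is the real gap.} You claim that a finite-overlap Cauchy--Schwarz argument over the patches $U_k(T)$ yields the $k^d$ factor. But $\phi_k^T = G_h^T\vec v - G_{h,k}^T\vec v$ has \emph{global} support (the ideal corrector $G_h^T\vec v$ lives on all of $\Omega$), so the overlap of the sets $U_k(T)$ is irrelevant: at a generic point of $\Omega$, \emph{all} of the $\phi_k^T$ are nonzero. A naive Cauchy--Schwarz would produce a factor $|\T_H|$, not $k^d$. The paper (Lemma~\ref{trunc-lemma-3}) instead expands $\vertiii{\vec z}^2$ with $\vec z=\sum_T\phi_k^T$, splits each inner product via the cutoff $\eta_{T,k+1}$, uses that $\vec w^{T,k}$ vanishes on $\operatorname{supp}(\eta_{T,k+1}\vec z)$, and applies the cutoff construction once more to $\vec z$ itself. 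The $k^d$ then arises from $\sum_T \vertiii{\vec z}_{U_{k+1}(T)}^2 \lesssim k^d\vertiii{\vec z}^2$, and this step also contributes an extra factor $\lambda(H/h)$, which after squaring accounts for the $\lambda^2$ prefactor in the statement. Your argument produces neither the correct $k^d$ mechanism nor this $\lambda^2$.

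\textbf{Two smaller issues.} First, the equality $\vertiii{\phi_k}^2 = \vertiii{G_h^T\vec v}^2_{\Omega\setminus U_k(T)}$ is false; Galerkin orthogonality gives only $\vertiii{\phi_k}\le \inf_{\tilde{\vec w}\in\oKhf{U_k(T)}}\vertiii{G_h^T\vec v-\tilde{\vec w}}$, and bounding this by the tail energy requires constructing an explicit approximant via the \emph{complementary} cutoff (supported inside $U_k(T)$)---another pass through your stream-function machinery, costing a further $\lambda$. Second, your one-step bound carries $\lambda^2$, but the correct count is $\lambda$: the cutoff error is $\lesssim\lambda\|G_h^T\vec v\|_{L^2}$ and you pair it once with $A^{-1}G_h^T\vec v$. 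With $\lambda^2$ in the contraction you would obtain $\theta^{k/\lambda^2}$ after iteration, not the claimed $\theta^{k/\lambda}$.
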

\begin{proof} The lemma is a direct consequence of Lemma~\ref{trunc-lemma-3} in Section~\ref{sec:decay}.
\end{proof}

Now, combining the error estimate for the ideal multiscale method in
Lemma~\ref{lem:idealerrorestimate} and Lemma~\ref{trunc-applied} we
get the following a priori error estimate of the localized multiscale method.
\begin{theorem}[Error estimate for localized multiscale solution]
  \label{thm:errorestimate}
  Under Assumptions (A1)--(A4) and (B1)--(B4), for a positive $k \in
  \mathbb{N}$, let $\vec u_h$ solve \eqref{eq:mixeddiscrete} and
  $\umsHk$ solve \eqref{eq:locmixedbilinear}, then
  \begin{equation}\label{final-estimate}
    \vertiii{\vec u_h - \umsHk} \lesssim H\|f - P_H f\|_{L^2(\Omega)} + k^{d/2} \lambda(H/h)^2 \theta^{k/\lambda(H/h)} \|f\|_{L^2(\Omega)},
  \end{equation}
  for some $0 < \theta < 1$ depending on the contrast $\beta/\alpha$, but
  not on $k$, $h$ and $H$.
\end{theorem}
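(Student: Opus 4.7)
The plan is to split $\vec u_h - \umsHk$ through the ideal multiscale solution via the triangle inequality
\[
\vertiii{\vec u_h - \umsHk} \le \vertiii{\vec u_h - \umsH} + \vertiii{\umsH - \umsHk}.
\]
The first term is immediately controlled by Lemma~\ref{lem:idealerrorestimate} and produces the $H\|f - P_Hf\|_{L^2(\Omega)}$ contribution, so the whole argument reduces to proving
$\vertiii{\umsH - \umsHk} \lesssim k^{d/2}\lambda(H/h)^2\theta^{k/\lambda(H/h)}\|f\|_{L^2(\Omega)}$ for the localization error.

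To handle the localization error, I would introduce the natural analog of $\umsH$ in the localized space: set $\vec w_H := \IH\umsH \in V_H$ (so that $\umsH = (\mathrm{Id} - G_h)\vec w_H$, using that $\IH$ annihilates the range of $G_h$) and $\tilde{\vec u} := (\mathrm{Id} - G_{h,k})\vec w_H \in \VmsHk$. Since both global correctors are divergence free, $\nabla\cdot\tilde{\vec u} = \nabla\cdot\vec w_H = -P_H f = \nabla\cdot\umsHk$, hence $\tilde{\vec u} - \umsHk \in \KmsHk$. A Galerkin/C\'ea-style argument then exploits two orthogonality properties: $a(\umsHk,\vec v) = 0$ for $\vec v \in \KmsHk$ (killing the pressure term in the localized variational formulation) and $a(\umsH,\vec v) = 0$ for all $\vec v \in K_h$ (using \eqref{eq:idealorthogonality} on the $\Khf$-part and the ideal variational formulation on the $\KmsH$-part, together with the decomposition $K_h = \KmsH \oplus \Khf$ inherited from $V_h = \VmsH \oplus \Vhf$). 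This yields $\vertiii{\tilde{\vec u} - \umsHk}^2 = a(\umsH - \tilde{\vec u}, \tilde{\vec u} - \umsHk)$, and a triangle inequality then gives $\vertiii{\umsH - \umsHk} \le 2\vertiii{(G_h - G_{h,k})\vec w_H}$.

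Next I would apply Lemma~\ref{trunc-applied} to $\vec w_H$ and combine it with the element-wise stability $\vertiii{G_h^T\vec w_H} \le \vertiii{\vec w_H}_T$ (obtained by testing \eqref{eq:idealcorrector} against $G_h^T\vec w_H$ and using Cauchy--Schwarz) to get
\[
\vertiii{(G_h - G_{h,k})\vec w_H} \lesssim k^{d/2}\lambda(H/h)\,\theta^{k/\lambda(H/h)}\vertiii{\vec w_H}.
\]
It remains to bound $\vertiii{\vec w_H} = \vertiii{\IH\umsH}$ by $\|f\|_{L^2(\Omega)}$ up to a logarithmic factor. For this, pick any $\vec u_0 \in V_H$ with $\nabla\cdot\vec u_0 = -P_Hf$ and $\|\vec u_0\|_V \lesssim \|f\|_{L^2(\Omega)}$ (guaranteed by inf-sup stability on $V_H\times Q_H$); then $\umsH - \vec u_0 \in K_h$, and Lemma~\ref{lemma-wohlmuth} applied element-wise yields $\|\IH(\umsH - \vec u_0)\|_{L^2(\Omega)} \lesssim \lambda(H/h)\|\umsH - \vec u_0\|_{L^2(\Omega)}$. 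Combined with the stability $\|\umsH\|_V \lesssim \|f\|_{L^2(\Omega)}$ from Lemma~\ref{lem:uniqueideal}, this produces $\vertiii{\vec w_H} \lesssim \lambda(H/h)\|f\|_{L^2(\Omega)}$, and chaining all bounds gives exactly \eqref{final-estimate}.

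The main obstacle is tracking the logarithmic factor $\lambda(H/h)$: the nodal Raviart--Thomas interpolation is not uniformly $L^2$-bounded on divergence-free functions, only with the $\lambda(H/h)$ loss from Lemma~\ref{lemma-wohlmuth}. This loss enters twice—once explicitly through Lemma~\ref{trunc-applied}, and once again when estimating $\vertiii{\IH\umsH}$—which is why the final estimate carries $\lambda(H/h)^2$ rather than merely $\lambda(H/h)$. As anticipated in Remark~\ref{rem:clement}, this double logarithmic loss would disappear entirely under an $L^2$-stable Cl\'ement-type projection.
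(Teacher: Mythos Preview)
Your proof is correct and follows essentially the same route as the paper: both introduce the comparison function $\tilde{\vec u}=(\mathrm{Id}-G_{h,k})\Pi_H\umsH\in\VmsHk$, use a Galerkin/C\'ea argument to reduce everything to $\vertiii{(G_h-G_{h,k})\Pi_H\umsH}$, and then combine Lemma~\ref{trunc-applied} with the logarithmic stability of $\Pi_H$ and the stability of $\umsH$. The only structural difference is the order of operations: the paper applies Galerkin orthogonality between $\vec u_h$ and $\umsHk$ directly (yielding $\vertiii{\vec u_h-\umsHk}\le\vertiii{\vec u_h-\tilde{\vec u}}$ in one step, since $a(\vec u_h,\vec v)=0$ for all $\vec v\in K_h$ from the reference problem), and only then splits through $\umsH$; you split through $\umsH$ first and then run a separate C\'ea argument on $\vertiii{\umsH-\umsHk}$, which requires the extra observation $K_h=\KmsH\oplus\Khf$ and costs a harmless factor of~$2$. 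Your treatment of $\vertiii{\Pi_H\umsH}$ is in fact slightly more careful than the paper's, since you explicitly subtract a $\vec u_0\in V_H$ with $\nabla\cdot\vec u_0=-P_Hf$ before invoking Lemma~\ref{lemma-wohlmuth}, which formally requires a divergence-free argument.
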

Before stating the proof, we discuss the role and choice of $k$. The
second term in the error estimate \eqref{final-estimate} is an effect
of the localization. This term can be made small by choosing large
values of $k$, i.e.\ large patch sizes. A natural question is how to
choose $k$ to make the second term of order $H$ to some power.

We write $\lambda = \lambda(H/h)$ for convenience. Let $\tilde k =
2d^{-1}\log(\theta) \lambda^{-1}k = -C_\theta\lambda^{-1}k$, where
$C_\theta = -2d^{-1}\log(\theta) > 0$ is a constant independent of $H$
and $h$. We are interested in the asymptotic behavior, so we consider
$H \ll 1$. Setting the second term in \eqref{final-estimate} equal to
$H\|f\|_{L^2(\Omega)}$ yields
\begin{equation*}
   \tilde k e^{\tilde k} = -C_\theta \lambda^{-4/d-1} H^{2/d},
\end{equation*}
that is $\tilde k = W(-C_\theta \lambda^{-4/d-1} H^{2/d})$, where $W$
is the Lambert $W$-function. In terms of the number of layers $k$, we
get $k = -C_\theta^{-1}\lambda W(-C_\theta \lambda^{-4/d-1}
H^{2/d})$. This equation has two solutions for sufficiently small
$H$. Since we require $k \ge 1$, we pick the branch $W \le -1$.

Another, more practical option is to choose $k = R\lambda\log(1/H)$
for some constant ${R}$. Then the expression
$k^{d/2}\lambda\theta^{k/\lambda}$ will be asymptotically (as $H \to
0$) dominated by the power $H^{-{R}\log\theta}$. Choosing ${R}$
sufficiently large yields arbitrary order of accuracy of the term.  The
fine mesh size $h$ is often fixed and we can choose
\begin{equation}
  \label{eq:choosek}
  k = (1+|\log_r(H)|)^{1/2}\log_s(1/H)
\end{equation}
for some bases $r$ and $s$ of the two logarithms.

\begin{remark}
  If Cl\'ement-type interpolation operators are used, we have
  $\lambda \equiv 1$ independent of $H/h$. Choosing $k =
  C\log(1/H)$ makes the second term in \eqref{final-estimate}
  proportional to $\log(1/H)^{d/2} H^{-C \log \theta}$. For an
  appropriate $C$ we can make the first term in \eqref{final-estimate}
  dominate the error estimate.
\end{remark}

\begin{proof}[Proof of Theorem~\ref{thm:errorestimate}.]
  Let $\tildeumsH := ((\mbox{Id} - G_{h,k})\circ \Pi_H) \umsH \in \VmsHk$, then
  $\tildeumsH - \umsHk$ is divergence free. Hence, by Galerkin
  orthogonality we have
  \begin{equation*}
    a(\vec u_h - \umsHk, \vec u_h - \umsHk) = a(\vec u_h - \umsHk, \vec u_h - \tildeumsH)
  \end{equation*}
  and obtain
  \begin{align*}
    \vertiii{\vec u_h - \umsHk} \le \vertiii{\vec u_h - \tildeumsH} \le \vertiii{\vec u_h - \umsH} + \vertiii{\umsH - \tildeumsH}.
  \end{align*}
  The first term can be bounded by $\beta^{1/2} C_{\widehat \Pi}
  C_{\rho,d} H \| f - P_Hf\|_{\Ltwo}$ by
  Lemma~\ref{lem:idealerrorestimate}. Regarding the second term, using
  \cite[Lemma 4.1]{WoToWi00} and stability of the ideal multiscale
  solution, we get
  \begin{align*}
     \sum_{T \in \T_H} \vertiii{G^T_{h} \Pi_H \umsH}^2 \le \sum_{T \in \T_H} \vertiii{\Pi_H \umsH}_T^2 = \vertiii{\Pi_H \umsH}^2 \lesssim \lambda(H/h)^2\|f\|^2_{L^2(\Omega)}
   \end{align*}
   and can combine this with Lemma~\ref{trunc-applied} to get
  \begin{align*}
    \vertiii{\umsH - \tildeumsH} &= \vertiii{(G_{h,k}-G_h)\Pi_H \umsH} \\
    & = \vertiii{\sum_{T\in\triH} (G^T_{h,k}-G^T_h)\Pi_H \umsH} \\
    & \lesssim k^{d/2} \lambda(H/h) \theta^{k/\lambda(H/h)} \left(\sum_{T \in \T_H} \vertiii{G^T_{h} \Pi_H \umsH}^2\right)^{1/2} \\
    & \lesssim k^{d/2} \lambda(H/h)^2 \theta^{k/\lambda(H/h)} \|f\|_{L^2(\Omega)}.
  \end{align*}
\end{proof}

\subsection{Proof of exponential decay of correctors}
\label{sec:decay}
This section consists of four lemmas,
Lemma~\ref{existence-of-skew-symmetric-matrix}--\ref{trunc-lemma-3},
of which the last one is the main result. The two first lemmas are
auxiliary and are motivated by steps in the proofs of the latter two.
Before starting, we need to set some notation and introduce some
tools. We use the notation $W^{1,2}_{\mathrm{loc}}(\R^d) = \{ f : f
\in H^1(\omega) \ \forall \text{ compact subsets } \omega \subset
\R^d\}$. Note that we will use the letter $K$ to denote arbitrary
triangles of the coarse mesh $\triH$. The first lemma says that every
divergence free function $\vec w$ in $\Hdiv$ is the divergence of a
skew-symmetric matrix.

\begin{lemma}\label{existence-of-skew-symmetric-matrix}
  Let $\Omega$ be a simply connected domain with Lipschitz boundary
  and let $\vec w \in \Hdiv$ with $\nabla \cdot \vec w = 0$ in
  $\Omega$. Then there exists a skew-symmetric matrix $\psi \in
  [W^{1,2}_{\mathrm{loc}}(\R^d)]^{d \times d}$ with $\nabla \psi_{ij}
  \in [L^2(\R^d)]^d$ and $\int_{\Omega} \psi = 0$ such that
\begin{align}
  \label{properties-of-skew-symmetric-matrix}\vec w = \nabla \cdot
  \psi \quad \mbox{in } \Omega \qquad \mbox{and} \qquad \| \nabla \psi_{ij}
  \|_{L^2(\omega)} 
  \lesssim 
  \| \vec w \|_{L^2(\omega)} \qquad
  \mbox{for } \omega \subset \Omega.
\end{align}
Here, the divergence of $\psi$ is defined along the rows.
\end{lemma}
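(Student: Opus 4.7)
My plan is to reduce to $d=2$ (per assumption (A4)) and build $\psi$ from a scalar stream function, because in two dimensions a divergence-free field on a simply connected domain is, by Poincaré's lemma, the rotated gradient of a single scalar potential. Concretely, first I would solve the system $\partial_1 \phi = -w_2$, $\partial_2 \phi = w_1$ on $\Omega$: since $\nabla \cdot \vec w = 0$ the 1-form $-w_2 \, dx_1 + w_1 \, dx_2$ is closed, and because $\Omega$ is simply connected it is exact, so there exists $\phi \in H^1(\Omega)$ satisfying this identity (definable as a line integral from a fixed basepoint). I would normalize $\phi$ by the condition $\int_\Omega \phi = 0$. The crucial local identity is then immediate: $|\nabla \phi|^2 = w_1^2 + w_2^2 = |\vec w|^2$ pointwise a.e., giving $\|\nabla \phi\|_{L^2(\omega)} = \|\vec w\|_{L^2(\omega)}$ for every $\omega \subseteq \Omega$.

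Second, I would extend $\phi$ to all of $\R^2$ with globally $L^2$-integrable gradient. Since $\Omega$ is a bounded Lipschitz domain, I would apply a Stein (or Calderón) extension operator to obtain $\tilde \phi \in H^1(\R^2)$ with $\|\tilde \phi\|_{H^1(\R^2)} \lesssim \|\phi\|_{H^1(\Omega)}$, and then multiply by a smooth, compactly supported cutoff equal to $1$ on $\Omega$. The resulting function, still called $\tilde \phi$, lies in $H^1(\R^2)$ with compact support, equals $\phi$ on $\Omega$, and satisfies $\nabla \tilde \phi \in [L^2(\R^2)]^2$. Finally, I would define the skew-symmetric matrix
\begin{equation*}
\psi := \begin{pmatrix} 0 & \tilde \phi \\ -\tilde \phi & 0 \end{pmatrix}.
\end{equation*}
Taking the divergence row-wise yields $(\partial_2 \tilde \phi, -\partial_1 \tilde \phi)^T = (w_1, w_2)^T = \vec w$ on $\Omega$. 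Skew-symmetry, the regularity $\psi \in [W^{1,2}_{\mathrm{loc}}(\R^2)]^{2\times 2}$, and $\nabla \psi_{ij} \in [L^2(\R^2)]^2$ are built into the construction, and $\int_\Omega \psi = 0$ follows from the normalization $\int_\Omega \phi = 0$. The local estimate is preserved because on any $\omega \subseteq \Omega$, $\tilde \phi$ coincides with $\phi$, so $\|\nabla \psi_{ij}\|_{L^2(\omega)} = \|\nabla \phi\|_{L^2(\omega)} = \|\vec w\|_{L^2(\omega)}$.

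The only subtle point is the combination of two requirements: the global integrability $\nabla \psi_{ij} \in L^2(\R^2)$, which forces an extension off $\Omega$, and the local identity $\|\nabla \psi_{ij}\|_{L^2(\omega)} \lesssim \|\vec w\|_{L^2(\omega)}$, which must not be spoiled by that extension. In my plan these are decoupled: the local bound comes directly from the stream function on $\Omega$, while the Stein extension plus cutoff handles only what happens outside $\Omega$. This is the main place care is required; once it is settled, the construction is essentially algebraic. (In dimension $d=3$ one would have to replace the scalar stream function by a vector potential $\vec A$ with $\vec w = \nabla \times \vec A$ and relate it to $\psi$ via $\psi_{ij} = \varepsilon_{ijk} A_k$, but by (A4) I do not need this here.)
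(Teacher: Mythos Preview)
Your argument is correct and complete; it differs from the paper's proof in the order of operations. The paper first extends the divergence-free field $\vec w$ to a divergence-free $\tilde{\vec w}\in H(\mathrm{div},\R^d)$ (using an extension result of Wendland), then invokes a general existence theorem for a skew-symmetric potential $\psi$ on all of $\R^d$ with $\tilde{\vec w}=\nabla\cdot\psi$, and only afterwards verifies the local estimate by the same componentwise computation you do (obtaining, as you note, equality rather than mere inequality for $d=2$). You instead build the scalar stream function $\phi$ directly on $\Omega$ via the Poincar\'e lemma, read off the pointwise identity $|\nabla\phi|=|\vec w|$ immediately, and then extend the \emph{scalar} $\phi$ by Stein plus cutoff.

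Your route is more elementary---it avoids both the divergence-free vector extension and the cited skew-symmetric decomposition lemma---and makes transparent why the local estimate holds with equality. The paper's route has the virtue of cleanly isolating where the restriction to $d=2$ enters: the existence of $\psi$ with $\nabla\psi_{ij}\in L^2(\R^d)$ goes through in any dimension, and only the local bound $\|\nabla\psi_{ij}\|_{L^2(\omega)}\lesssim\|\vec w\|_{L^2(\omega)}$ fails to follow from the construction when $d=3$. In your approach the two-dimensionality is baked in from the first line, which is perfectly fine given assumption~(A4), but obscures slightly the fact that only the \emph{local} part of the statement is genuinely two-dimensional.
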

Note that the above lemma is the only instance, where we require the restriction $d=2$. Even though the existence of the skew-symmetric matrix is also available for $d=3$, we could not prove localized estimates of the type $\| \nabla \psi_{ij} \|_{L^2(\omega)} \lesssim \| \vec w \|_{L^2(\omega)}$.
\begin{proof}
  The result is a combination of well-known results. First, we extend
  the divergence-free vector field $\vec w \in \Hdiv$ to a
  divergence-free vector field $\tilde{\vec w} \in \HdivRd$. In
  particular we have $\tilde{\vec w}\in [L^2(\R^d)]^d$ and $\tilde{\vec
    w}=\vec w$ in $\Omega$. Note that the extension of $\vec w$ to
  $\mathbb{R}^d$ will be typically not zero outside of $\Omega$. The
  existence of such an extension operator was proved in
  \cite[Proposition 3.8]{Wie09}. It is well known that there exists a
  skew-symmetric matrix $\psi \in [W^{1,2}_{\mathrm{loc}}(\R^d)]^{d
    \times d}$ with $\nabla \psi_{ij} \in [L^2(\R^d)]^d$, such that
  $\tilde{\vec w} = \nabla \cdot \psi$ (see \cite[Lemma
  2.3]{IKL13}). The matrix is only unique up to a constant, so we
  fix the constant by $\int_{\Omega} \psi = 0$ (which gives us a
  Poincar\'e inequality). The 
  inequality $\| \nabla \psi_{ij} \|_{L^2(\omega)} \lesssim \| \tilde{\vec w} \|_{L^2(\omega)}$
  (for $\omega \subset \R^d$) 
   can be seen as follows for $d=2$.
  Obviously, if $i=j$ we obtain $\nabla \psi_{ii}=\nabla \psi_{jj}=0$ and estimate \eqref{properties-of-skew-symmetric-matrix} is trivial. If $i\neq j$, we obtain by using the skew-symmetry
  \begin{align*}
  \| \vec w \|_{L^2(\omega)}^2 &= \| \nabla \cdot
  \psi \|_{L^2(\omega)}^2 = \| \partial_1 \psi_{11} + \partial_2 \psi_{12} \|_{L^2(\omega)}^2 
  + \| \partial_1 \psi_{21} + \partial_2 \psi_{22} \|_{L^2(\omega)}^2\\
  &=
  \| \partial_2 \psi_{12} \|_{L^2(\omega)}^2 
  + \| \partial_1 \psi_{21}  \|_{L^2(\omega)}^2
  =
  \| \partial_2 \psi_{12} \|_{L^2(\omega)}^2 
  + \| \partial_1 \psi_{12}  \|_{L^2(\omega)}^2\\
  &= \| \nabla  \psi_{12}  \|_{L^2(\omega)}^2 = \| \nabla  \psi_{21}  \|_{L^2(\omega)}^2,
  \end{align*}
  i.e. we obtain even equality in estimate \eqref{properties-of-skew-symmetric-matrix}.
\end{proof}

We also require suitable cut-off functions that are central for the
proof. For $T\in\T_H$ and positive $k\in\mathbb{N}$, we let the
function $\eta_{T,k} \in P_1(\triH)$ (globally continuous and piecewise linear w.r.t.\
$\triH$) be defined as
\begin{equation}\label{e:cutoffH}
\begin{aligned}
 \eta_{T,k}(x) &= 0\quad\text{for } x \in U_{k-1}(T),\\
 \eta_{T,k}(x) &= 1\quad\text{for } x \in  \Omega\setminus U_k(T).\\
\end{aligned}
\end{equation}
  
We start with the following lemma, which enables us to approximate truncated functions from $\Khf$.

\begin{lemma}
\label{trunc-lemma-1}
Let $\vec w_h \in \Khf$ and let $\psi \in
[W^{1,2}_{\mathrm{loc}}(\Omega)]^{d \times d}$ with $\vec w_h = \nabla
\cdot \psi$ denote the corresponding skew-symmetric matrix as in Lemma
\ref{existence-of-skew-symmetric-matrix}. Let furthermore $\psi_K :=
|K|^{-1}\int_{K} \psi$ denote the average on $K \in \T_H$ and let
$\psi_H \in [L^2(\Omega)]^{d \times d}$ denote the corresponding
piecewise constant matrix with $\psi_H(x)=\psi_K$ for $x \in K$. The
broken divergence-operator $\nabla_H \cdot {}$ 
is given by $\nabla_H \cdot v := \nabla \cdot v \vert_{K}$
for $K \in \T_H$. The function $\eta_{T,k}\in P_1(\T_H)$ is a
given cut-off function as defined in \eqref{e:cutoffH} for
$k>0$. Then, we have that the function $\tilde{\vec w}_h:= \Ih \left(
  \nabla \cdot (\eta_{T,k} \psi) \right) - (\IH \circ \Ih) \left(
  \nabla \cdot (\eta_{T,k} \psi) \right) \in \Khf$ fulfills the
following estimate for any $K \in \triH$:
\begin{multline*}
      \| \nabla \cdot (\eta_{T,k} \psi )
      - \nabla_H \cdot (\eta_{T,k} \psi_H ) 
      - \tilde{\vec w}_h \|_{L^2(K)}
    \\
    \lesssim
    \begin{cases}
      \lambda(H/h) \| \vec w_h \|_{L^2(K)} & K \subset U_{k}(T) \setminus U_{k-1}(T)\\
      0 & \text{otherwise}.
    \end{cases}
\end{multline*}
Obviously we also have $\mbox{\rm supp}(\tilde{\vec w}_h) \subset \Omega \setminus U_{k-1}(T)$.
\end{lemma}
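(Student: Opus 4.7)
The plan is to realize $\tilde{\vec w}_h$ as a fine-scale, divergence-free truncation of $g:=\nabla\cdot(\eta_{T,k}\psi)$ and then to compare it, coarse element by coarse element, with the pointwise residual $g - \nabla_H\cdot(\eta_{T,k}\psi_H)$. Membership $\tilde{\vec w}_h\in\Khf$ comes first: skew-symmetry of $\psi$ combined with $\eta_{T,k}$ being piecewise linear yields the element-wise identity $\nabla\cdot g = \partial_i\partial_j(\eta_{T,k}\psi_{ij})=0$, so the commuting property of $\Ih$ gives $\nabla\cdot\Ih g = P_h(\nabla\cdot g)=0$, placing $\Ih g$ in $K_h$; subtracting $\IH\Ih g$ lands the result in the kernel of $\IH$, hence in $\Khf$.

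Next I would dispose of the ``otherwise'' cases. On $U_{k-1}(T)$ the cutoff vanishes, so $g\equiv 0$ and $\nabla_H\cdot(\eta_{T,k}\psi_H)\equiv 0$ there; all fine and coarse edge fluxes used to assemble $\Ih g$ and $\IH\Ih g$ on those elements are therefore zero, which both establishes the support statement $\mathrm{supp}(\tilde{\vec w}_h)\subset\Omega\setminus U_{k-1}(T)$ and kills the residual on any $K\subset U_{k-1}(T)$. For $K\subset\Omega\setminus U_k(T)$ the cutoff equals $1$ in a neighborhood of $K$, so $g=\vec w_h$ and $\nabla\eta_{T,k}\cdot\psi_H=0$ locally; the coarse edge fluxes of $\Ih g$ around $K$ then coincide with those of $\vec w_h$, which vanish by $\vec w_h\in\Khf$, so $\IH\Ih g|_K=0$ and $\tilde{\vec w}_h|_K=\vec w_h|_K$, making the residual vanish on such $K$ as well.

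The substantive work is on the annulus $K\subset U_k(T)\setminus U_{k-1}(T)$. Starting from the pointwise identity
\[
g - \nabla_H\cdot(\eta_{T,k}\psi_H) \;=\; \eta_{T,k}\vec w_h + \nabla\eta_{T,k}\cdot(\psi - \psi_H),
\]
I would bound the right-hand side in $L^2(K)$ by $\lesssim \|\vec w_h\|_{L^2(K)}$ using $\|\eta_{T,k}\|_{L^\infty}\le 1$, $\|\nabla\eta_{T,k}\|_{L^\infty(K)}\lesssim H^{-1}$, the Poincar\'e inequality $\|\psi-\psi_K\|_{L^2(K)}\lesssim H|\nabla\psi|_{L^2(K)}$, and the control $|\nabla\psi|_{L^2(K)}\lesssim \|\vec w_h\|_{L^2(K)}$ from Lemma~\ref{existence-of-skew-symmetric-matrix}. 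For $\tilde{\vec w}_h=(\mathrm{Id}-\IH)\Ih g$ on $K$, local $L^2$-stability of $\Ih$ applied to the same decomposition gives $\|\Ih g\|_{L^2(K)}\lesssim \|\vec w_h\|_{L^2(K)}$, and the logarithmic stability of $\IH$ on divergence-free Raviart--Thomas fields, Lemma~\ref{lemma-wohlmuth}, applied to the divergence-free $\Ih g$ yields $\|\IH\Ih g\|_{L^2(K)}\lesssim \lambda(H/h)\|\Ih g\|_{L^2(K)}$. A triangle inequality assembles these ingredients into the claimed bound $\lesssim \lambda(H/h)\|\vec w_h\|_{L^2(K)}$.

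The main obstacle I expect is the rigorous handling of $\Ih g$ on the annulus: because $\nabla\eta_{T,k}$ is discontinuous across the coarse skeleton, $g$ has normal-flux jumps there, and the subtraction $\nabla_H\cdot(\eta_{T,k}\psi_H)$ is precisely engineered to remove the rough constant part of $g$ on each coarse element so that the remaining smooth contribution is inherited stably by $\Ih$ and $\IH$. A secondary difficulty is keeping only a single factor of $\lambda(H/h)$, which forces one to apply Lemma~\ref{lemma-wohlmuth} directly to the already divergence-free $\Ih g$ rather than estimating $\Ih g$ and $\IH\Ih g$ with independent logarithmic losses.
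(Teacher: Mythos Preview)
Your membership argument for $\tilde{\vec w}_h\in\Khf$ and your treatment of the two ``otherwise'' cases are correct and agree with the paper. The problem is the annulus step. The claim
\[
\|\Ih g\|_{L^2(K)}\ \lesssim\ \|\vec w_h\|_{L^2(K)}
\]
is false as stated. Polynomial stability gives you only $\|\Ih g\|_{L^2(K)}\lesssim\|g\|_{L^2(K)}$, and on the annulus
\[
g=\eta\,\vec w_h+\nabla\eta\cdot\psi=\eta\,\vec w_h+\nabla\eta\cdot(\psi-\psi_K)+\nabla\eta\cdot\psi_K.
\]
The last term is a constant of size roughly $H^{-1}|\psi_K|$, and $\psi_K$ is a \emph{global} quantity (only $\int_\Omega\psi=0$ is normalized), so $\|g\|_{L^2(K)}$ is in general not controlled by $\|\vec w_h\|_{L^2(K)}$. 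Your final paragraph correctly senses this obstruction, but the proposed mechanism---subtracting the broken function $\nabla_H\cdot(\eta\psi_H)$ before applying $\Ih$---does not work, because that function jumps across the coarse skeleton and is not in the domain of $\Ih$.

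The paper's remedy is algebraic rather than analytic: for each fixed $K$ it extends $c_K:=|K|^{-1}\int_K\eta$ and $\psi_K$ as global constants and uses the three identities
\[
\Ih\!\big(\nabla\!\cdot(c_K\psi_K)\big)=0,\qquad
(\IH\circ\Ih)\big(\nabla\!\cdot(\eta\psi_K)\big)=\Ih\big(\nabla\!\cdot(\eta\psi_K)\big),\qquad
(\IH\circ\Ih)\big(\nabla\!\cdot(c_K\psi)\big)=c_K\,\IH\vec w_h=0,
\]
(the middle one because $\nabla\!\cdot(\eta\psi_K)\in V_H$, the last one because $\vec w_h\in\Khf$). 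These let one rewrite the entire residual on $K$ in terms of $\nabla\!\cdot\big((\eta-c_K)(\psi-\psi_K)\big)$, from which the non-local constant $\psi_K$ has been eliminated \emph{inside} the interpolation operators. Only then are the polynomial stability of $\Ih$ and Lemma~\ref{lemma-wohlmuth} applied, giving the single $\lambda(H/h)$ factor. Your route can be repaired along these lines, but it requires this subtraction-at-the-operator-level idea, not the pointwise subtraction you sketch.
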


\begin{proof}
First, we observe that the skew-symmetric matrix $\psi$ 
must be a polynomial of maximum degree $2$ on each fine grid element. We use this in the following without mentioning. 

We fix the element $T\in\T_H$ and $k\in\mathbb{N}$ and denote $\eta:=\eta_{T,k}$. Furthermore, we define for $K \in \T_H$
\begin{align*}
c_K := |K|^{-1}\int_{K} \eta \qquad \mbox{and} \qquad \psi_K :=  |K|^{-1}\int_{K} \psi.
\end{align*}
We define $\tilde{\vec w}_h:= \Ih \left( \nabla \cdot (\eta \psi) \right) - (\IH \circ \Ih) \left( \nabla \cdot (\eta \psi) \right)$ and observe that $\tilde{\vec w}_h \in \Khf$ and $\vec w_h = \tilde{\vec w}_h$ on $\Omega \setminus U_k(T)$.
The property $\IH(\tilde{\vec w}_h)=0$ is clear. The property $\nabla \cdot \tilde{\vec w}_h =0$ follows from the fact that $\eta \psi$ is still skew symmetric and that $\nabla \cdot (\IH \circ \Ih)(\cdot) = (P_H \circ P_h)( \nabla \cdot{})$. Since $\psi_K$ and $c_K$ are constant on $K$ we have 
\begin{align}
\label{trunc-lemma-1-proof-step-1}\Ih \left( \nabla \cdot (c_K \psi_K) \right) = \nabla \cdot (c_K \psi_K) = 0 \quad \mbox{on } K.
\end{align}
Furthermore, since $\IH(\vec v_H)=\vec v_H$ for all $\vec v_H \in V_H$ and since $\nabla \cdot ( \eta \psi_K ) \in V_H$ we also have
\begin{align}
\label{trunc-lemma-1-proof-step-2}(\IH \circ \Ih)( \nabla \cdot ( \eta \psi_K ) ) =  \Ih \left( \nabla \cdot ( \eta \psi_K ) \right) \quad \mbox{on } K.
\end{align}
Finally, we also have on $K$,
\begin{align}
\label{trunc-lemma-1-proof-step-3}(\IH \circ \Ih)(  \nabla \cdot (c_K \psi ) ) = c_K  (\IH \circ \Ih)(  \nabla \cdot \psi ) = c_K  \IH(  \vec w_h ) = 0.
\end{align}
Combining \eqref{trunc-lemma-1-proof-step-1}, \eqref{trunc-lemma-1-proof-step-2}, and \eqref{trunc-lemma-1-proof-step-3} we obtain for every $K \in \T_H$
\begin{align}
\label{trunc-lemma-1-proof-step-4}
\nonumber &\| (\IH \circ \Ih) \left( \nabla \cdot (\eta \psi ) \right) - \Ih ( \nabla \cdot ( \eta \psi_K ) ) \|_{L^2(K)}\\ 
\nonumber &\quad = \| (\IH \circ \Ih) \left( \nabla \cdot (\eta \psi ) - \nabla \cdot (c_K \psi ) - \nabla \cdot ( \eta \psi_K ) + \nabla \cdot (c_K \psi_K) \right) \|_{L^2(K)} \\
&\quad = \|  (\IH \circ \Ih) \left( \nabla \cdot ((\eta-c_K) (\psi-\psi_K)) \right)  \|_{L^2(K)}.
\end{align}
Now, we consider the quantity we want to estimate. For any $K \in \T_H$,
\begin{align}
\label{trunc-lemma-1-proof-step-5}
\nonumber 
&\| \nabla \cdot (\eta \psi )
- \nabla_H \cdot (\eta \psi_H ) 
 - \tilde{\vec w}_h \|_{L^2(K)}\\[-1ex]
\nonumber &\quad \overset{\phantom{(99)}}{\le}
\|  \nabla \cdot (\eta (\psi - \psi_K ))
- \Ih \left( \nabla \cdot (\eta (\psi - \psi_K) ) \right) \|_{L^2(K)} \\
\nonumber&\quad \phantom{=} \quad {}+ \| \Ih \left( \nabla \cdot (\eta (\psi - \psi_K) ) \right)
 - \Ih \left( \nabla \cdot (\eta \psi) \right) + (\IH \circ \Ih) \left( \nabla \cdot (\eta \psi) \right)
   \|_{L^2(K)}\\
\nonumber &\quad \overset{\phantom{(99)}}{=} \|\nabla \cdot (\eta (\psi - \psi_K ))
- \Ih \left( \nabla \cdot (\eta (\psi - \psi_K) ) \right) \|_{L^2(K)}\\
\nonumber&\quad \phantom{=} \quad {}+ \| (\IH \circ \Ih) \left( \nabla \cdot (\eta \psi ) \right) - \Ih ( \nabla \cdot ( \eta \psi_K ) ) \|_{L^2(K)}\\
\nonumber &\quad \overset{\eqref{trunc-lemma-1-proof-step-4}}{=}
\|  \nabla \cdot ((\eta - c_K) (\psi - \psi_K ))
- \Ih \left( \nabla \cdot ((\eta-c_K) (\psi - \psi_K) ) \right) \|_{L^2(K)}\\
\nonumber&\quad \phantom{=} \quad {}+ \|  (\IH \circ \Ih) \left( \nabla \cdot ((\eta-c_K) (\psi-\psi_K)) \right)  \|_{L^2(K)}\\[-1ex]
&\quad \overset{\phantom{(99)}}{\lesssim} \lambda(H/h) \|  \nabla \cdot ((\eta - c_K) (\psi - \psi_K )) \|_{L^2(K)}.
\end{align}
In the last step we used Lemma \ref{lemma-wohlmuth}, the property that
$ \Ih \nabla \cdot ((\eta - c_K) (\psi - \psi_K ))$ is divergence free
and the fact that $\Ih$ is locally $L^2$-stable when applied to
functions of small fixed polynomial degree, i.e.\ for fixed $t\in\T_h$
and $r \in \mathbb{N}$ there exists a constant $C(r)$ that only
depends on $r$ and the shape regularity of $t$ such that
\begin{align*}
\| \Ih ( \vec v ) \|_{L^2(t)} \le C(r) \| \vec v \|_{L^2(t)} \qquad \mbox{for all } \vec v \in [\mathbb{P}^r(t)]^d.
\end{align*}
Continuing from \eqref{trunc-lemma-1-proof-step-5} we obtain
\begin{align}
\label{eq:limited-support}
\nonumber & \|  \nabla \cdot ((\eta - c_K) (\psi - \psi_K ) \|_{L^2(K)}^2 \\[-2ex]
\nonumber & \quad \overset{\phantom{(99)}}{\lesssim} \| (\eta-c_K) \nabla \cdot \psi \|_{L^2(K)}^2 + \| (\psi - \psi_K) \nabla \eta \|_{L^2(K)}^2 \\[-2ex]
\nonumber & \quad \overset{\phantom{(99)}}{\lesssim} H^2 \| \nabla \eta \|_{L^{\infty}(K)}^2  \| \nabla \psi  \|_{L^2(K)}^2 \\
& \quad \overset{\eqref{properties-of-skew-symmetric-matrix}}{\lesssim} 
\begin{cases}
 \| \vec w \|_{L^2(K)}^2 & K \subset U_{k}(T) \setminus U_{k-1}(T)\\
      0 & \text{otherwise}.
\end{cases}
\end{align}
Note that we used the properties of $\eta$ to obtain the 
Lipschitz bound $\| \eta- c_K \|_{L^{\infty}(K)} \lesssim H \| \nabla
\eta \|_{L^{\infty}(K)} \lesssim 1$ and that $\nabla \eta$ has no
support outside $U_{k}(T) \setminus U_{k-1}(T)$. We also used the
Poincar\'e inequality 
for $\eta- c_K$ which has a zero average on $K$.
Combining
\eqref{trunc-lemma-1-proof-step-5} and \eqref{eq:limited-support}
yields the sought result.
\end{proof}

We continue with a lemma showing the exponential decay of solutions to
problems of the form in \eqref{eq:idealcorrector}.

\begin{lemma}\label{trunc-lemma-2}
Now, let $\vec w^T \in \Khf$ be the solution of
\begin{align}
\label{generalized-corrector-problem-2-L2}\int_{\Omega} {\vec A}^{-1}  \vec w^T \cdot \vec v_h =F_T(\vec v_h) \qquad \mbox{for all } \vec v_h \in \Khf
\end{align}
where $F_T\in (\Khf)^{\prime}$ is such that $F_T(\vec v_h)=0$ for all
$\vec v_h \in \oKhf{\Omega \setminus T}$. Then, there exists a generic
constant $0<\theta < 1$ (depending on the contrast $\beta/\alpha$)
such that for all positive $k\in\mathbb{N}$:
\begin{align}
\label{lemma-a-2-eq-L2}\vertiii{\vec w^T}_{\Omega\setminus U_k(T)}\lesssim \theta^{k/\lambda(H/h)}\vertiii{\vec w^T}_{\Omega}.
\end{align}
\end{lemma}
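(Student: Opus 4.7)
The plan is to derive a one-step geometric decay inequality of the form $a_k \le \tilde\theta(\lambda)\, a_{k-1}$ for $a_k := \vertiii{\vec w^T}^2_{\Omega\setminus U_k(T)}$ with $\tilde\theta < 1$, and iterate. The driving mechanism is Galerkin orthogonality tested against a cut-off version of $\vec w^T$ that still lies in $\Khf$.

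First, since $\vec w^T\in \Khf$ is divergence free, Lemma~\ref{existence-of-skew-symmetric-matrix} yields a skew-symmetric $\psi$ with $\vec w^T = \nabla\cdot\psi$ and the local control $\|\nabla\psi\|_{L^2(\omega)}\lesssim \|\vec w^T\|_{L^2(\omega)}$. For fixed $k\ge 1$ take the cut-off $\eta:=\eta_{T,k}$ from \eqref{e:cutoffH}. Lemma~\ref{trunc-lemma-1} then produces
\[
\tilde{\vec w}_h := \Ih(\nabla\cdot(\eta\psi)) - (\IH\circ\Ih)(\nabla\cdot(\eta\psi)) \;\in\; \Khf,
\]
with $\mathrm{supp}(\tilde{\vec w}_h)\subset \Omega\setminus U_{k-1}(T)\subset \Omega\setminus T$. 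By the support hypothesis on $F_T$, we obtain the key orthogonality $a(\vec w^T,\tilde{\vec w}_h)=F_T(\tilde{\vec w}_h)=0$.

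Second, use $\eta=1$ on $\Omega\setminus U_k(T)$ and $\eta=0$ on $U_{k-1}(T)$ to write
\[
a_k = \int_{\Omega\setminus U_k(T)} {\vec A}^{-1}\vec w^T\cdot\nabla\cdot(\eta\psi)
= \int_{\Omega} {\vec A}^{-1}\vec w^T\cdot\nabla\cdot(\eta\psi) - \int_{U_k(T)\setminus U_{k-1}(T)} {\vec A}^{-1}\vec w^T\cdot\nabla\cdot(\eta\psi).
\]
Inserting the orthogonality $0=a(\vec w^T,\tilde{\vec w}_h)$ and the broken term $\nabla_H\cdot(\eta\psi_H)$ gives three pieces. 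The first, involving $\nabla\cdot(\eta\psi)-\nabla_H\cdot(\eta\psi_H)-\tilde{\vec w}_h$, is controlled by Lemma~\ref{trunc-lemma-1} with an $L^2$-norm bounded by $\lambda(H/h)\|\vec w^T\|_{L^2(U_k(T)\setminus U_{k-1}(T))}$. Writing $\nabla\cdot(\eta\psi) = \eta\vec w^T + \nabla\eta\cdot\psi$ and $\nabla_H\cdot(\eta\psi_H) = \nabla\eta\cdot\psi_H$ elementwise, together with the Poincar\'e estimate $\|\psi-\psi_K\|_{L^2(K)}\lesssim H\|\nabla\psi\|_{L^2(K)}\lesssim H\|\vec w^T\|_{L^2(K)}$, shows the remaining two pieces are both supported on the annulus and $L^2$-bounded by $\|\vec w^T\|_{L^2(U_k(T)\setminus U_{k-1}(T))}$. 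Applying Cauchy--Schwarz (and equivalence of the weighted and standard $L^2$-norms via $\alpha,\beta$) yields
\[
a_k \;\lesssim\; \lambda(H/h)\,\vertiii{\vec w^T}^2_{U_k(T)\setminus U_{k-1}(T)} \;=\; \lambda(H/h)\,(a_{k-1}-a_k).
\]

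Rearranging gives $a_k\le \tilde\theta\,a_{k-1}$ with $\tilde\theta := \frac{C\lambda(H/h)}{1+C\lambda(H/h)}<1$. Iterating from $a_0\le \vertiii{\vec w^T}^2_\Omega$ and using the elementary bound $\tilde\theta\le \exp\!\big(-\tfrac{1}{(1+C)\lambda(H/h)}\big)$ (valid since $\lambda(H/h)\ge 1$) converts the decay into $a_k \le \theta^{2k/\lambda(H/h)}\vertiii{\vec w^T}^2_\Omega$ with a generic $\theta\in(0,1)$ that depends only on the contrast $\beta/\alpha$. I expect the main technical obstacle to be the bookkeeping in the second step: ensuring that every piece of $a_k$ besides the Lemma~\ref{trunc-lemma-1}-controlled term is genuinely localized to the annulus $U_k(T)\setminus U_{k-1}(T)$, since this is precisely what enables the geometric decay $a_k\lesssim \lambda(a_{k-1}-a_k)$ with only a linear (not quadratic) $\lambda$-factor, which is what produces $\theta^{k/\lambda}$ rather than $\theta^{k/\lambda^2}$.
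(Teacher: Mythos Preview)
Your proposal is correct and follows essentially the same route as the paper: represent $\vec w^T=\nabla\cdot\psi$, use the cut-off $\eta_{T,k}$ and Lemma~\ref{trunc-lemma-1} to build $\tilde{\vec w}_h\in\Khf$ supported outside $T$, invoke $F_T(\tilde{\vec w}_h)=0$, and decompose to obtain the annulus estimate $a_k\lesssim\lambda(H/h)(a_{k-1}-a_k)$ before iterating. The only cosmetic difference is that the paper starts directly from the inequality $\int_{\Omega\setminus U_k(T)}A^{-1}\vec w^T\cdot\vec w^T\le\int_{\Omega\setminus U_{k-1}(T)}A^{-1}\vec w^T\cdot(\eta\vec w^T)$ and splits into two terms, whereas you write an exact identity and (implicitly) drop the nonpositive piece $-\int_{U_k\setminus U_{k-1}}\eta\,A^{-1}\vec w^T\cdot\vec w^T$; the resulting estimates coincide.
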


\begin{proof}
The proof exploits similar arguments as in \cite{Pet14}.
Let us fix $k \in \mathbb{N}$. We denote again $\eta:=\eta_{T,k}\in P_1(\T_H)$ (as in \eqref{e:cutoffH}). We apply Lemma \ref{trunc-lemma-1} to $\vec w^T \in \Khf$. The corresponding skew symmetric matrix shall again be denoted by $\psi=\psi(\vec w^T)$ and we define
$$\tilde{\vec w}^T:=\Ih (\nabla \cdot (\eta \psi)) - (\IH \circ \Ih) \left( \nabla \cdot (\eta \psi) \right).$$
We obtain that $\nabla \cdot (\eta \psi ) - \nabla_H \cdot (\eta \psi_H ) -
\tilde{\vec w}^T$ is zero outside $U_{k}(T)\setminus U_{k-1}(T)$ and
\begin{multline}
\label{trunc-lemma-2-proof-step-0}\| \nabla \cdot (\eta \psi )
- \nabla_H \cdot (\eta \psi_H ) 
 - \tilde{\vec w}^T\|_{L^2(U_{k}(T)\setminus U_{k-1}(T))}  \lesssim \lambda(H/h) \| \vec w^T \|_{L^2(U_{k}(T)\setminus U_{k-1}(T))}.
\end{multline}
First observe that
\begin{align}
\label{trunc-lemma-2-proof-step-1} \int_{\Omega \setminus U_{k-1}(T)} {\vec A}^{-1} \vec w^T \cdot \tilde{\vec w}^T =  \int_{\Omega} {\vec A}^{-1} \vec w^T \cdot \tilde{\vec w}^T = F_T ( \tilde{\vec w}^T) = 0
\end{align}
and
\begin{align}
\label{trunc-lemma-2-proof-step-2}\eta \vec w^T = \eta \nabla \cdot \psi = \nabla \cdot ( \eta \psi ) - \psi \nabla \eta.
\end{align}
With that we have 
\begin{equation*}
\begin{aligned}
\int_{\Omega\setminus U_k(T)}{\vec A}^{-1} \vec w^T \cdot \vec w^T &\overset{\phantom{(99)}}{\leq} \int_{\Omega\setminus U_{k-1}(T)} {\vec A}^{-1}\vec w^T \cdot (\eta \vec w^T)\\
&\overset{\eqref{trunc-lemma-2-proof-step-2}}{=} \int_{\Omega\setminus U_{k-1}(T)} {\vec A}^{-1}\vec w^T \cdot \left( \nabla \cdot ( \eta \psi ) - \psi \nabla \eta \right) \\
&\overset{\eqref{trunc-lemma-2-proof-step-1}}{=} \int_{\Omega\setminus U_{k-1}(T)} {\vec A}^{-1}\vec w^T \cdot \left( \nabla \cdot ( \eta \psi ) - \psi \nabla \eta - \tilde{\vec w}^T \right)\\
&\overset{\phantom{(99)}}{=} \underset{=:\mbox{\rm I}}{\underbrace{\int_{\Omega\setminus U_{k-1}(T)} {\vec A}^{-1}\vec w^T \cdot \left( \nabla \cdot ( \eta \psi ) - \nabla_H \cdot (\eta \psi_H ) - \tilde{\vec w}^T\right)}} \\
&\phantom{=}\quad {}+ \underset{=:\mbox{\rm II}}{\underbrace{\int_{\Omega\setminus U_{k-1}(T)} {\vec A}^{-1}\vec w^T \cdot \left(\nabla_H \cdot (\eta \psi_H ) - \psi \nabla \eta \right)}}.\\
\end{aligned}
\end{equation*}
For I we use \eqref{trunc-lemma-2-proof-step-0} to obtain
\begin{align*}
\mbox{\rm I} 
\lesssim \lambda(H/h) \vertiii{\vec w^T}_{U_{k}(T) \setminus U_{k-1}(T)}^2
\end{align*}
and for II we obtain
\begin{equation*}
  \begin{aligned}
    \mbox{\rm II} &= \int_{\Omega\setminus U_{k-1}(T)} {\vec A}^{-1}\vec w^T \cdot \left( (\psi_H  - \psi) \nabla \eta \right)\\
    &\lesssim \underset{K \subset U_{k}(T) \setminus U_{k-1}(T)}{\sum_{K \in \T_H}} \vertiii{\vec w^T}_K H \| \nabla \eta \|_{L^{\infty}(K)} \| \nabla \psi \|_{L^2(K)} \\
    &\lesssim \vertiii{\vec w^T}_{U_{k}(T) \setminus U_{k-1}(T)}^2.
  \end{aligned}
\end{equation*}
Now, denote by $L := C\lambda(H/h)$, and we get
\begin{align*}
\vertiii{\vec w^T}_{\Omega\setminus U_{k}(T)}^2 & \leq L \vertiii{\vec w^T}_{U_{k}(T) \setminus U_{k-1}(T)}^2  \leq L \left(\vertiii{\vec w^T}_{\Omega \setminus U_{k-1}(T)}^2 - \vertiii{\vec w^T}_{\Omega \setminus U_{k}(T)}^2\right)
\end{align*}
where $C$ is independent of $T$, $k$ and ${\vec A}$, but can depend on
the contrast. 
We obtain
\begin{equation*}
  \vertiii{\vec w^T}_{\Omega\setminus U_{k}(T)}^2 \le (1+L^{-1})^{-1} \vertiii{\vec w^T}_{\Omega\setminus U_{k-1}(T)}^2.
\end{equation*}
A recursive application of this inequality and $\vertiii{\vec w^T}_{\Omega\setminus U_{0}(T)} \le \vertiii{\vec w^T}_{\Omega}$ yields
\begin{align*}
\vertiii{\vec w^T}_{\Omega\setminus U_{k}(T)}^2 & \le e^{-\log(1+L^{-1})k} \vertiii{\vec w^T}_{\Omega}^2  \le e^{-\log(1+C^{-1})k/\lambda(H/h)} \vertiii{\vec w^T}_{\Omega}^2,
\end{align*}
where we used Bernoulli's inequality and that $0 < L^{-1} \le C^{-1}$
in the last step. The choice $\theta:=(1+C^{-1})^{-1}$ proves the
lemma.
\end{proof}

The following lemma is the main result of this subsection. It can be
directly applied to the localized corrector problems
\eqref{eq:loccorrector} with $F_T(\vec v_h) = a^T(\vec v, \vec v_h)$,
$G^T_{h,k} \vec v = \vec w^{T,k}$ and $G^T_{h} \vec v = \vec w^T$ for any $\vec v \in V$.
\begin{lemma}
  \label{trunc-lemma-3}
Let the setting of Lemma \ref{trunc-lemma-2} hold true and let additionally $\vec w^{T,k} \in \oKhf{U_k(T)}$ denote the solution of
\begin{align}
\label{generalized-corrector-problem-localized-L2}\int_{U_k(T)} {\vec A}^{-1} \vec w^{T,k} \cdot \vec v_h =F_T(\vec v_h) \qquad \mbox{for all } \vec v_h \in \oKhf{U_k(T)}.
\end{align}
Then, there exists a generic constant $0 < \theta < 1$ (depending on the contrast) such that for all positive $k\in\mathbb{N}$:
\begin{align}
\vertiii{\sum_{T \in \T_H} \left(\vec w^T-\vec w^{T,k}\right)}_{\Omega}^2 \lesssim k^d \lambda(H/h)^2 \theta^{2k/\lambda(H/h)} \sum_{T \in \T_H} \vertiii{\vec w^T}_{\Omega}^2.
\end{align}
\end{lemma}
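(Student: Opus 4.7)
The plan is to first establish an individual exponential-decay estimate
\[ \vertiii{\vec w^T - \vec w^{T,k}} \lesssim \lambda(H/h)\, \theta^{k/\lambda(H/h)}\, \vertiii{\vec w^T} \]
for each $T \in \T_H$, and then to assemble the global sum via a cut-off decomposition that respects the $\mathcal{O}(k^d)$ finite-overlap structure of the patches $U_k(T)$.

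For the per-element estimate, I would first observe that both $\vec w^T$ and $\vec w^{T,k}$ satisfy $a(\cdot, \vec\varphi) = F_T(\vec\varphi)$ for every $\vec\varphi \in \oKhf{U_k(T)} \subset \Khf$, so that $\vec w^{T,k}$ is the $a$-orthogonal (Galerkin) projection of $\vec w^T$ onto $\oKhf{U_k(T)}$. The symmetric coercive form $a$ then yields the best-approximation inequality
\[ \vertiii{\vec w^T - \vec w^{T,k}} \le \inf_{\vec\varphi \in \oKhf{U_k(T)}} \vertiii{\vec w^T - \vec\varphi}. \]
Next, I would construct a concrete competitor by invoking Lemma~\ref{existence-of-skew-symmetric-matrix} to write $\vec w^T = \nabla\cdot\psi$ for a skew-symmetric potential $\psi$, and applying Lemma~\ref{trunc-lemma-1} with cut-off $\eta = \eta_{T,k}$. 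This produces the splitting $\vec w^T = \hat{\vec w}^T + \tilde{\vec w}^T$ in which $\hat{\vec w}^T \in \oKhf{U_k(T)}$ and $\tilde{\vec w}^T \in \Khf$ is supported in $\Omega \setminus U_{k-1}(T)$. Choosing $\vec\varphi = \hat{\vec w}^T$ gives $\vertiii{\vec w^T - \vec w^{T,k}} \le \vertiii{\tilde{\vec w}^T}$, and the latter is controlled by combining the annular $L^2$-bound of Lemma~\ref{trunc-lemma-1} on $U_k(T) \setminus U_{k-1}(T)$ with the identity $\tilde{\vec w}^T = \vec w^T$ outside $U_k(T)$ and the exponential decay of Lemma~\ref{trunc-lemma-2} applied to $\vec w^T$.

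For the sum, I would decompose
\[ \sum_T (\vec w^T - \vec w^{T,k}) = \sum_T (\hat{\vec w}^T - \vec w^{T,k}) + \sum_T \tilde{\vec w}^T. \]
The summands of the first sum lie in $\oKhf{U_k(T)}$, so the finite overlap of the patches together with the triangle inequality $\vertiii{\hat{\vec w}^T - \vec w^{T,k}} \le \vertiii{\tilde{\vec w}^T} + \vertiii{\vec w^T - \vec w^{T,k}}$ and the per-element bound yield
\[ \vertiii{\sum_T (\hat{\vec w}^T - \vec w^{T,k})}^2 \lesssim k^d\, \lambda(H/h)^2\, \theta^{2k/\lambda(H/h)}\, \sum_T \vertiii{\vec w^T}^2. \]

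The hard part, and the main obstacle in this plan, is bounding the tail sum $\sum_T \tilde{\vec w}^T$: its summands have the large supports $\Omega \setminus U_{k-1}(T)$, so a naive triangle-inequality assembly would introduce a spurious $|\T_H|^{1/2}$ factor. My approach is a duality argument: test against an arbitrary $\vec\varphi \in \Khf$ of unit energy, apply Lemma~\ref{trunc-lemma-1} to obtain an analogous decomposition $\vec\varphi = \hat{\vec\varphi}^T + \tilde{\vec\varphi}^T$ for each $T$, and use Galerkin orthogonality to rewrite the pairing $a(\tilde{\vec w}^T, \hat{\vec\varphi}^T)$ as $-a(\hat{\vec w}^T - \vec w^{T,k}, \hat{\vec\varphi}^T)$, which is localized in $U_k(T)$ and summable by finite overlap. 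The remaining tail-tail contribution $a(\tilde{\vec w}^T, \tilde{\vec\varphi}^T)$ requires iterating the cut-off decomposition on successive annular layers $U_{k+j}(T) \setminus U_{k+j-1}(T)$, so that the decay factor $\theta^{(k+j)/\lambda(H/h)}$ outpaces the overlap growth at each layer; the resulting geometric series in $j$ then converges and produces a bound of the correct order. Combining the estimates for both sums completes the proof.
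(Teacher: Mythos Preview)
Your per-element estimate via Galerkin best-approximation and the competitor built from Lemma~\ref{trunc-lemma-1} is exactly what the paper does for the bound $\vertiii{\vec w^T-\vec w^{T,k}}^2 \lesssim \theta^{2k/\lambda(H/h)}\vertiii{\vec w^T}^2$. The divergence is in how you assemble the sum.

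The paper does \emph{not} decompose each $\vec w^T$ into a localized piece and a tail. Instead it tests the full sum $\vec z:=\sum_T(\vec w^T-\vec w^{T,k})$ against itself and applies the cut-off $\eta_{T,k+1}$ to the \emph{test function} $\vec z$, writing
\[
\vertiii{\vec z}^2=\sum_T\Big(a\big(\vec w^T-\vec w^{T,k},(1-\eta_{T,k+1})\vec z\big)+a\big(\vec w^T-\vec w^{T,k},\eta_{T,k+1}\vec z\big)\Big).
\]
The first piece is supported in $U_{k+1}(T)$, and for the second piece one runs the argument of Lemma~\ref{trunc-lemma-1} on the potential of $\vec z$; both yield $\lesssim\lambda(H/h)\,\vertiii{\vec w^T-\vec w^{T,k}}\,\vertiii{\vec z}_{U_{k+1}(T)}$. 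Cauchy--Schwarz and finite overlap of the $U_{k+1}(T)$ then give the factor $k^{d/2}\vertiii{\vec z}$ on the right, which cancels one power of $\vertiii{\vec z}$ on the left. This completely sidesteps your tail-sum problem: there is no iteration and only a single application of the cut-off construction to $\vec z$.

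Your proposed route is not incorrect in spirit, but the iteration over annular layers has a real cost. The geometric ratio in your series is $\theta^{1/\lambda(H/h)}$, not $\theta$; since $\lambda(H/h)$ grows with $H/h$, this ratio approaches $1$ and the sum $\sum_{j\ge 0}\theta^{j/\lambda}(k+j)^{(d-1)/2}$ contributes an additional factor proportional to $\lambda(H/h)$ (from $(1-\theta^{1/\lambda})^{-1}\sim\lambda/|\log\theta|$), on top of the $\lambda$-factor you already pick up from Lemma~\ref{trunc-lemma-1} at each layer. So your plan, if carried out carefully, would likely yield a weaker power of $\lambda(H/h)$ than the stated $\lambda(H/h)^2$. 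The paper's single-step localization of $\vec z$ is both simpler and sharper.
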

\begin{proof}
Let $\eta_{T,k}$ be defined according to \eqref{e:cutoffH} and denote $\vec z:=\sum_{T \in \T_H}(\vec w^T-\vec w^{T,k}) \in \Khf$.  We obtain
\begin{equation*}
\vertiii{\vec z}_{\Omega}^2 = \sum_{T\in\T_H} \underset{=:\mbox{I}}{\underbrace{( {\vec A}^{-1} (\vec w^T-\vec w^{T,k}), (1- \eta_{T,k+1})\vec z )}} + \underset{=:\mbox{II}}{\underbrace{({\vec A}^{-1} (\vec w^T-\vec w^{T,k}),  \eta_{T,k+1} \vec z )}}.
\end{equation*}
The first term is estimated by
\begin{align*}
\mbox{I} &\le \vertiii{\vec w^T-\vec w^{T,k}}_{\Omega} \vertiii{\vec z (1- \eta_{T,k+1})}_{U_{k+1}(T)}
\le \vertiii{\vec w^T-\vec w^{T,k}}_{\Omega} \vertiii{\vec z}_{U_{k+1}(T)}.
\end{align*}
For the second term we have $\mathbf{z} \in \Khf$, hence there exists again a
skew-symmetric matrix $\psi=\psi(\mathbf{z})$ with the properties as
in Lemma~\ref{existence-of-skew-symmetric-matrix} with
\begin{align*}
\eta_{T,k+1} \mathbf{z} =\eta_{T,k+1} \nabla \cdot \psi = \nabla \cdot ( \eta_{T,k+1} \psi ) - \psi \nabla \eta_{T,k+1}.
\end{align*}
We define $\tilde{\vec z}:=\Ih (\nabla \cdot (\eta_{T,k+1} \psi)) - (\IH \circ \Ih) \left( \nabla \cdot (\eta_{T,k+1} \psi) \right)$. Using Lemma~\ref{trunc-lemma-1} and supp$(\eta_{T,k+1} \mathbf{z})\cap$supp$(\vec w^{T,k})=\emptyset$ we get 
\begin{equation*}
\begin{aligned}
\lefteqn{({\vec A}^{-1} (\vec w^T - \vec w^{T,k}), \eta_{T,k+1} \mathbf{z}  ) = ({\vec A}^{-1} \vec w^T , \eta_{T,k+1} \mathbf{z}  )}\\
&\quad \overset{\eqref{trunc-lemma-2-proof-step-1}}{=} \int_{\Omega\setminus U_{k}(T)} {\vec A}^{-1}\vec w^T \cdot \left( \nabla \cdot (\eta_{T,k+1} \psi ) - \psi \nabla \eta_{T,k+1} - \tilde{\mathbf{z}} \right)\\
&\quad \overset{\phantom{(99)}}{=} \int_{\Omega\setminus U_{k}(T)} {\vec A}^{-1}\left( \vec w^T - \vec w^{T,k} \right) \cdot \left( \nabla \cdot (\eta_{T,k+1} \psi ) - \psi \nabla \eta_{T,k+1} - \tilde{\mathbf{z}} \right). \\
\end{aligned}
\end{equation*}
Now proceed as in Lemma \ref{trunc-lemma-2} to obtain
\begin{equation*}
  \mbox{II}\lesssim \lambda(H/h)\vertiii{\vec w^T-\vec w^{T,k}}_\Omega \vertiii{\vec z}_{U_{k+1}(T)}.
\end{equation*}
Combining the estimates for I and II and applying Hölder's inequality finally yields, for $k \ge 1$,
\begin{equation}
\begin{aligned}
\label{lemma-a-3-proof-eq-0}\vertiii{\vec z}_{\Omega}^2
&\lesssim \lambda(H/h)\sum_{T\in\T_H} \vertiii{\vec w^T-\vec w^{T,k}}_\Omega \vertiii{\vec z}_{U_{k+1}(T)}\\
&\lesssim k^{\frac{d}{2}} \lambda(H/h) \left( \sum_{T\in\T_H} \vertiii{\vec w^T-\vec w^{T,k}}_{\Omega}^2 \right)^{\frac{1}{2}} \vertiii{\vec z}_{\Omega}.
\end{aligned}
\end{equation}
It remains to bound $\vertiii{\vec w^T-\vec w^{T,k}}_{\Omega}^2$. In order to do this, we use Galerkin orthogonality for the  local problems, which gives us
\begin{align*}
\label{galerkin-orthogonality-local-eq} \vertiii{\vec w^T-\vec w^{T,k}}_{\Omega}^2 \le \inf_{\tilde{\vec w}^{T,k} \in \oKhf{U_k(T)}} \vertiii{\vec w^T-\tilde{\vec w}^{T,k}}_{\Omega}^2.
\end{align*}
Again, we use Lemma \ref{trunc-lemma-2} to show
\begin{eqnarray}
\vertiii{\vec w^T-\vec w^{T,k}}_{\Omega}^2 \lesssim \theta^{2k/\lambda(H/h)} \vertiii{\vec w^T }_{\Omega}^2.
\label{lemma-a-3-proof-eq-3}
\end{eqnarray}
Combining \eqref{lemma-a-3-proof-eq-0} and \eqref{lemma-a-3-proof-eq-3} proves the lemma.
\end{proof}

\subsection{Inf-sup stability revisited}
\label{sec:infsup}
The decay results can be used to prove another inf-sup stability
constant $\gamma_k^1$ in addition to $\gamma_k^0$ from
Lemma~\ref{lem:uniquelocalized} for the bilinear form $b(\cdot,\cdot)$ with the
localized multiscale space.  Using Lemma~\ref{trunc-lemma-3}, we
obtain
\begin{equation*}
  \begin{aligned}
    \| G_{h,k} \vec v - G_h \vec v\|_{L^{2}(\Omega)}^2 & = \left\| \sum_{T\in\triH} (G^T_{h,k} \vec v - G^T_h \vec v) \right\|_{L^{2}(\Omega)}^2 \\
    & \lesssim k^d \lambda(H/h)^2 \theta^{2k/\lambda(H/h)} \sum_{T \in \triH} \| G^T_h \vec v\|_{L^2(\Omega)}^2 \\
    & \lesssim k^d \lambda(H/h)^2 \theta^{2k/\lambda(H/h)} \| \vec v\|_{L^2(\Omega)}^2. \\
  \end{aligned}
\end{equation*}
We get the following stability 
\begin{equation*}
  \begin{aligned}
    \| G_{h,k} \vec v \|_{L^{2}(\Omega)} & \le \| G_{h,k} \vec v - G_h \vec v\|_{L^{2}(\Omega)} + \| G_h \vec v\|_{L^{2}(\Omega)} \\
    & \lesssim (k^{d/2} \lambda(H/h) \theta^{k/\lambda(H/h)} + 1) \| \vec v \|_{L^{2}(\Omega)}. \\
  \end{aligned}
\end{equation*}
Using the same technique as in Lemma~\ref{lem:uniquelocalized}, we
obtain an inf-sup stability constant $\gamma_k^1 :=
\gamma(2+k^{d/2} \lambda(H/h) \theta^{k/\lambda(H/h)})^{-1}$.

For the nodal Raviart--Thomas interpolation operator $\Pi_H$,
$\lambda(H/h)$ depends on $h$ and $H$, and we cannot obtain a uniform
bound on the constant for this estimate either. However, for
$L^2$-stable Cl\'ement-type interpolation operators (discussed in
Remark~\ref{rem:clement}), we have $\lambda(H/h) \equiv 1$,
independently of $h$ and $H$. If using such an interpolator in place
of $\Pi_H$, the inf-sup stability constant $\gamma_k^1$ can be bounded
from below by a positive constant independent of $h$ and $H$, since
$k^{d/2} \theta^{k}$ is bounded from above with respect to $k$.

\section{Numerical experiments}
\label{sec:experiments}
Four numerical experiments are presented in this section. Their
purpose is to show that the error estimate for the localized
multiscale method presented in Theorem~\ref{thm:errorestimate} is
valid and useful for determining the patch sizes and that the method
is competitive.

A brief overview of the implementation of the method follows. The two
dimensional Raviart--Thomas finite element is used. For all free
degrees of freedom $e$ (interior edges), the localized global
corrector $G_{h,k}\Phi_e$ for the corresponding basis function
$\Phi_e$ is computed according to equation
\eqref{eq:loccorrector}. The additional constraints on the test and
trial functions to be in the kernel to the coarse Raviart--Thomas
projection operator are implemented using Lagrange multipliers (in
addition to those already there due to the mixed formulation). The
corrector problems are cheap since they are solved only on small
patches. This can be done in parallel over all basis
functions. Finally, problem \eqref{eq:locmixedbilinear} is
solved. Regarding the linear system arising here, we compare it with
the linear system arising from a standard Raviart--Thomas
discretization (using $V_H$ for the flux) of the mixed formulation on
the corase mesh:
\begin{equation*}
  \begin{pmatrix}
    \vec K & \vec B^T \\
    \vec B & \vec 0 \\
  \end{pmatrix} = 
  \begin{pmatrix} \vec 0 \\ \vec b \\
  \end{pmatrix},
\end{equation*}
for matrices $\vec K$ and $\vec B$ and a vector $\vec b$. The
difference with the multiscale method is that matrix corresponding to
the bilinear form $a(\cdot,\cdot)$ is computed using the low
dimensional modified localized multiscale basis $\{\Phi_E -
G_{h,k}\Phi_E\}_E$ spanning $\VmsHk$. Since the correctors are
divergence free, $\vec K$ is replaced by a different matrix $\vec
{\tilde K}$ in the system above, whereas $\vec B$ and $\vec b$ are left intact.

In all numerical experiments below, the diffusion matrix is diagonal
with identical diagonal elements, $\vec A(x) = A(x)\vec I$, with $\vec
I$ being the identity matrix, for a scalar-valued function $A$. 

\subsection{Investigation of error from localization}
\label{sec:firstexperiment}
In this experiment, we investigate how the error in energy norm of the
localized multiscale solution is affected by the localization to
patches of the correctors. The error due to localization is bounded by
the second term in the estimate in
Theorem~\ref{thm:errorestimate}. This term will be the focus of this
experiment.

The computational domain is the unit square $\Omega = [0,1]^2$ and the
source function is given by
\begin{equation*}
  f(x) = \begin{cases} 1 & \text{if }x \in [0,1/4]^2,\\
    -1 & \text{if } x \in [3/4,1]^2,\\
    0 & \text{otherwise}.
  \end{cases}
\end{equation*}
We consider three different diffusion coefficients $A$:
\begin{enumerate}
  \item Constant: $A(x) = 1$ in the whole domain.
  \item Noise: $A(x)$ is piecewise constant on a $2^{7} \times 2^{7}$
    uniform rectangular grid. In each grid cell, the value of $A$ is
    equal to a realization of $\exp(10\omega)$, where $\omega$ is a
    cell-specific standard uniformly distributed variable.
  \item Channels: $A(x)$ is as is shown in
    Figure~\ref{fig:structured_a}. It is piecewise constant on a
    $2^{7} \times 2^{7}$ uniform rectangular grid. The coefficient
    $A(x) = 1$ for $x$ in black cells and $A(x) = \exp(10)$ for $x$ in
    white cells.
    \begin{figure}[htb]
      \centering
      \includegraphics[width=0.3\textwidth,frame]{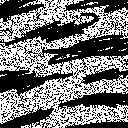}
      \caption{Coefficient $A$ defined on a $2^7 \times 2^7$ grid of
        $\Omega$.}
      \label{fig:structured_a}
    \end{figure}
\end{enumerate}
\begin{figure}[htb]
  \centering
  \begin{subfigure}{.4\textwidth}
    \centering
    \includegraphics[width=0.3\textwidth,trim=2cm 2cm 2cm 2cm,clip=true]{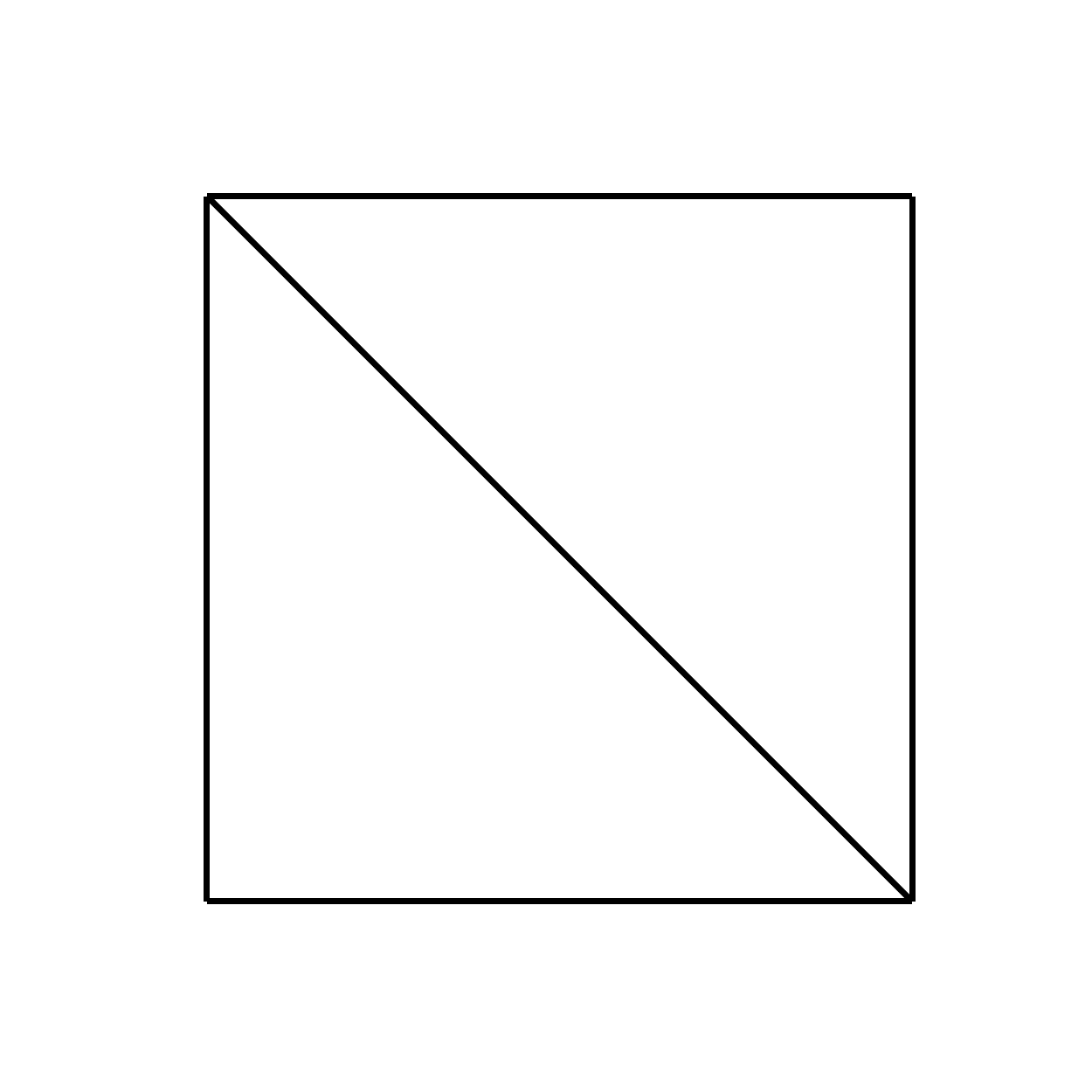}
    \caption{Coarsest mesh, $h = 1$.}
  \end{subfigure}
  \hspace{1em}
  \begin{subfigure}{.4\textwidth}
    \centering
    \includegraphics[width=0.3\textwidth,trim=2cm 2cm 2cm 2cm,clip=true]{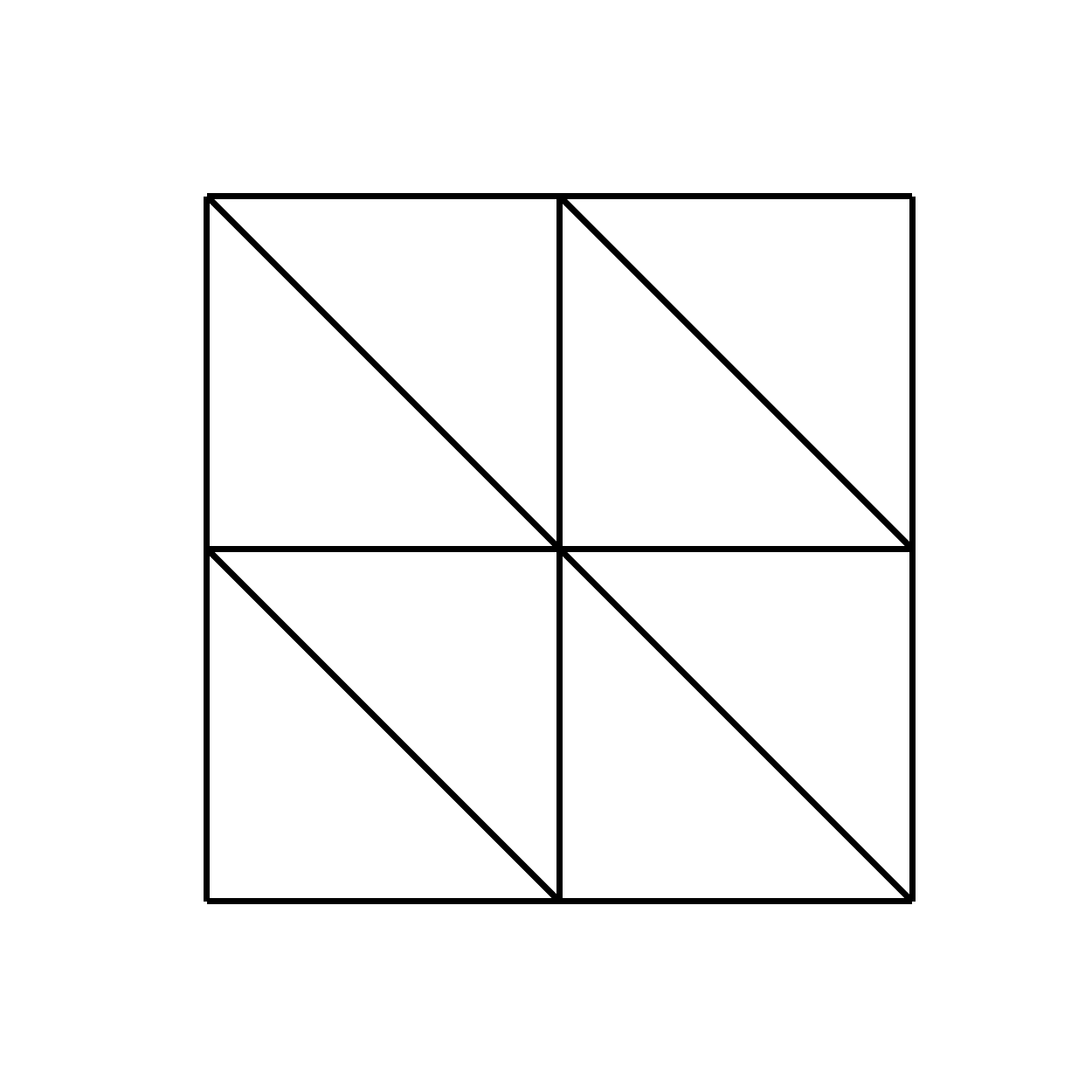}
    \caption{One refinement, $h = 1/2$.}
  \end{subfigure}
  \caption{Family of triangulations of the unit square.}
  \label{fig:mesh}
\end{figure}
Figure~\ref{fig:mesh} shows the mesh used in the experiment. Both fine
and coarse meshes are constructed as shown in the figure. A reference
solution $\vec u_h$ was computed with the standard Raviart--Thomas
spaces $V_h$ and $Q_h$ with $h = 2^{-8}$. Solutions $\umsHk$ to the
localized multiscale problem were computed using $H = 2^{-2}, 2^{-3},
\ldots, 2^{-6}$. The patch size $k$ was chosen as
\begin{equation*}
  k = C (1+\log_2(H/h))^{1/2}\log_2(H^{-1})
\end{equation*}
rounded to the nearest integer with $C = 0.25$ and $C = 0.5$. The
relative error (using the reference solution in place of the exact
solution) in energy norm, i.e.\ $\vertiii{\vec u_h -
  \umsHk}/\vertiii{\vec u_h}$ was computed. See
Figure~\ref{fig:convergence} for the resulting convergence of this
error with respect to $H$ for the two values of $C$. Note that since
$f \in Q_H$ for all examples, the first term in \eqref{final-estimate}
vanishes. The error is hence bounded by $k^{d/2} \lambda(H/h)^2
\theta^{k/\lambda(H/h)} \|f\|_{L^2(\Omega)}$, which allows for a
careful investigation of the influence of $k$, $H$ and $h$. A
reference line proportional to $H^2$ is plotted for guidance. We can
see that we achieve convergence for both choices of $C$. However,
since $k$ is rounded to an integer, the convergence plots have a
staggered appearance. This example shows that the error due to
localization can be kept small and decreases with $H$. The plots also
show the relative error in energy norm for the standard
Raviart--Thomas discretization on the coarse mesh. It is evident that
the localized multiscale space has good approximation properties since
it permits convergence while the standard space of the same dimension does not.
\begin{figure}[p]
  \centering
  \begin{subfigure}{\textwidth}
    \centering
    \includegraphics[trim=0 0.3cm 0 0,clip=true]{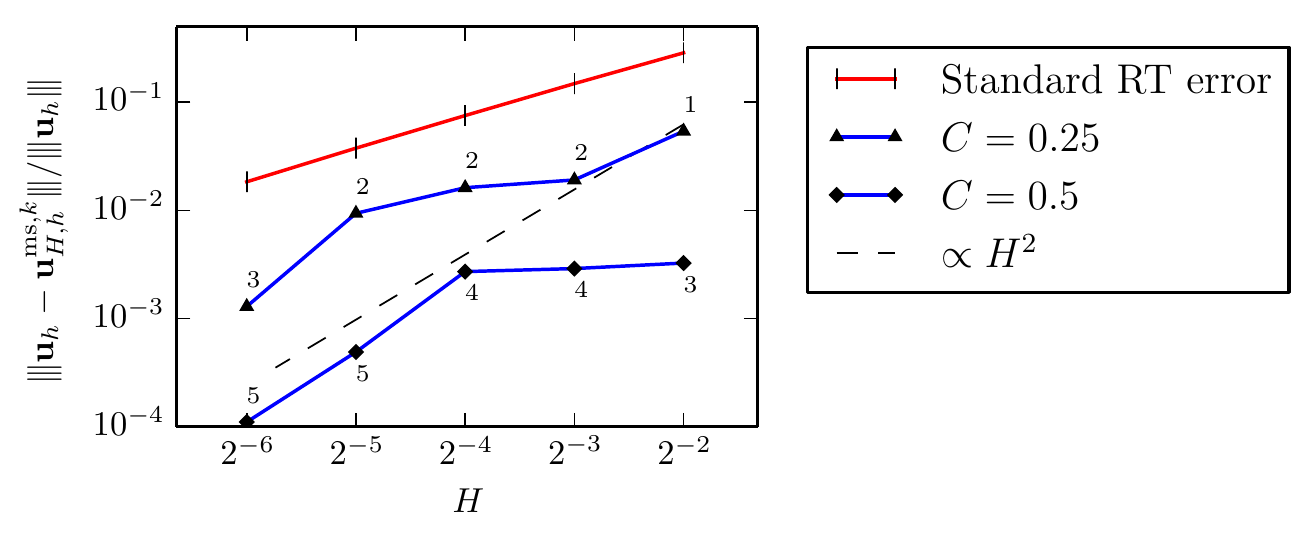}
    \caption{Diffusion coefficient is constant.}
  \end{subfigure}
  \begin{subfigure}{\textwidth}
    \centering
    \includegraphics[trim=0 0.3cm 0 0,clip=true]{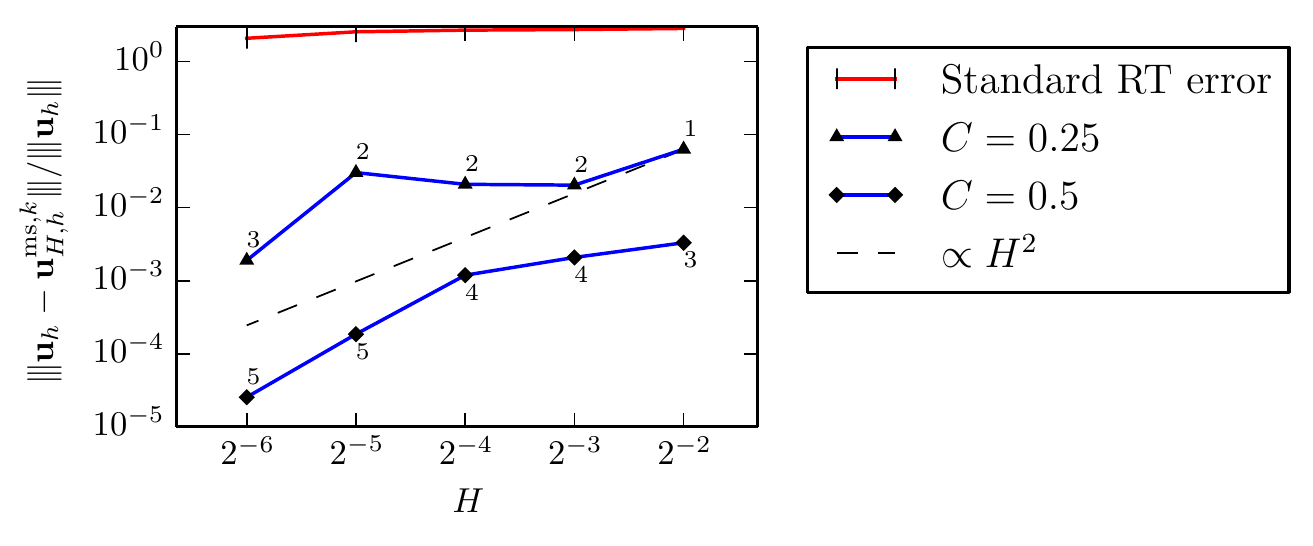}
    \caption{Diffusion coefficient is noisy.}
  \end{subfigure}
  \begin{subfigure}{\textwidth}
    \centering
    \includegraphics[trim=0 0.3cm 0 0,clip=true]{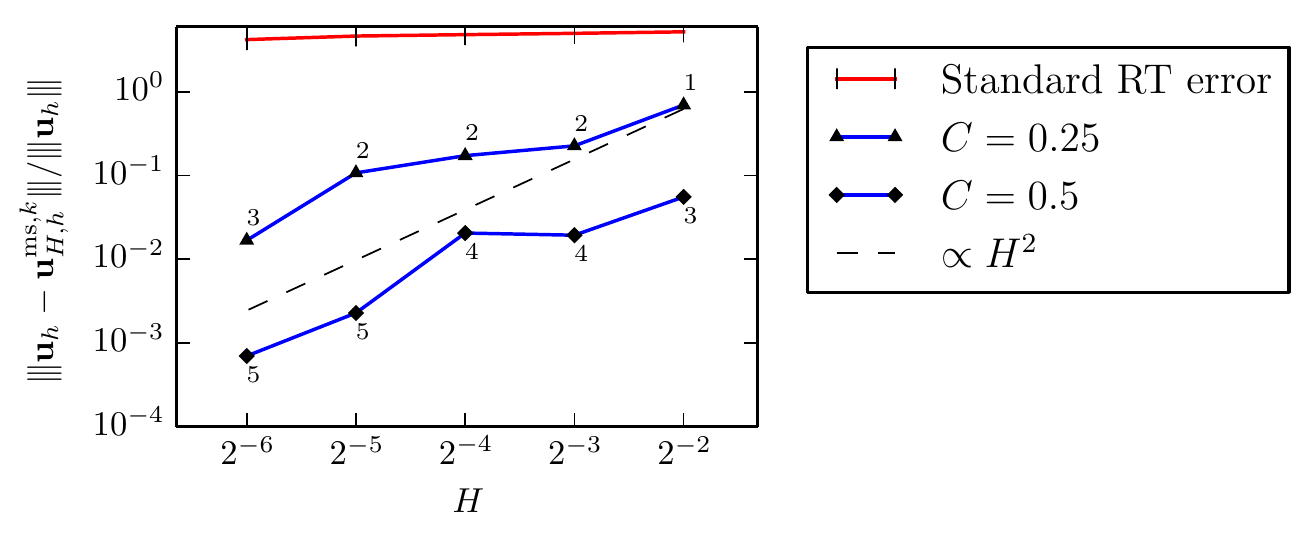}
    \caption{Diffusion coefficient has channel structures.}
  \end{subfigure}
  \caption{Convergence plots for localization error
    experiments. Relative error in energy norm for three choices of
    $A$, for different values of the constant $C$ determining the
    patch size. The number adjacent to a point is the actual value of
    $k$ for the specific simulation corresponding to that point.}
  \label{fig:convergence}
\end{figure}

\subsection{Investigation of instability}
\label{sec:instability}
In this experiment we show how singularity-like features can appear in
the solution, probably as a result of high contrast in combination
with the $L^2$-instability of the nodal Raviart--Thomas interpolator.

Again, we consider the unit square $\Omega = [0,1]^2$. The diffusion
coefficient $A$ is chosen according to Figure~\ref{fig:singularity_a}.
\begin{figure}[htb]
  \centering
  \includegraphics[width=0.3\textwidth,frame]{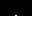}
  \caption{Coefficient $A$ defined on a $2^5 \times 2^5$ grid of
    $\Omega$ where $A(x) = 1$ for $x$ in black cells and $A(x) =
    \exp(10)$ for $x$ in white cells.}
  \label{fig:singularity_a}
\end{figure}
In other words,
$A$ is defined as
\begin{equation*}
  A(x) = \begin{cases} \exp(10) & \text{if }x_2 < 1/2 \text{ or } x \in [\frac{1}{2}-2^{-5}, \frac{1}{2}+2^{-5}]\times [\frac{1}{2}, \frac{1}{2}+2^{-5}],\\
    1 & \text{otherwise}.
  \end{cases}
\end{equation*}
The source function is chosen as
\begin{equation*}
  f(x) = \begin{cases} -1 & \text{if }x_2 < 1/2,\\
    1 & \text{otherwise}.
  \end{cases}
\end{equation*}
This particular choice of $A$ and $f$ yields a localized multiscale
solution with a clear singularity-like feature at $x = (x_1, x_2) = (1/2, 1/2)$ in the localized
multiscale solution.

We use the family of triangulations presented in Figure~\ref{fig:mesh}
and fix $H = 1/4$ so that $f$ is resolved on the coarse scale. Then $f
\in Q_H$ and all error stems from localization (see
Theorem~\ref{thm:errorestimate}). We let the resolution $h$ of the
fine space be $h = 2^{-5}, 2^{-6},\ldots, 2^{-9}$. Choosing $k = 2$,
we compute the localized multiscale solution $\umsHk$ and reference
solution $\vec u_h$ for the given values of $h$.

From the error estimate in Theorem~\ref{thm:errorestimate}, we expect
to have
\begin{equation*}
  \begin{aligned}
    \vertiii{\vec u_h - \umsHk} & \lesssim k^{d/2} \lambda(H/h)^2 \theta^{k/\lambda(H/h)} \|f\|_{L^2(\Omega)} \\
    & \propto \log\left(h^{-1}\right) \quad \text{as $h \to 0$}.\\
  \end{aligned}
\end{equation*}
The energy norm of the error is plotted in
Figure~\ref{fig:singularity_error}. We can see that for this
particular problem and range of $h$, the error increases with $h$ and
with the rate $\log(h^{-1})$ as predicted by the error estimate. However,
the error estimate seems not to be sharp for this particular
example. Figure~\ref{fig:singularity_solutions} shows the reference
and multiscale flux solutions. The magnitude of the reference solution
is in the range $[0,3]$, while the multiscale solution has a spike
reaching magnitude $30$ at $x = (1/2, 1/2)$. Interesting to note is
that the singularities vanish for the ideal multiscale method, i.e.\
without localization, see Lemma~\ref{lem:idealerrorestimate}.
\begin{figure}[htb]
  \centering
  \includegraphics[]{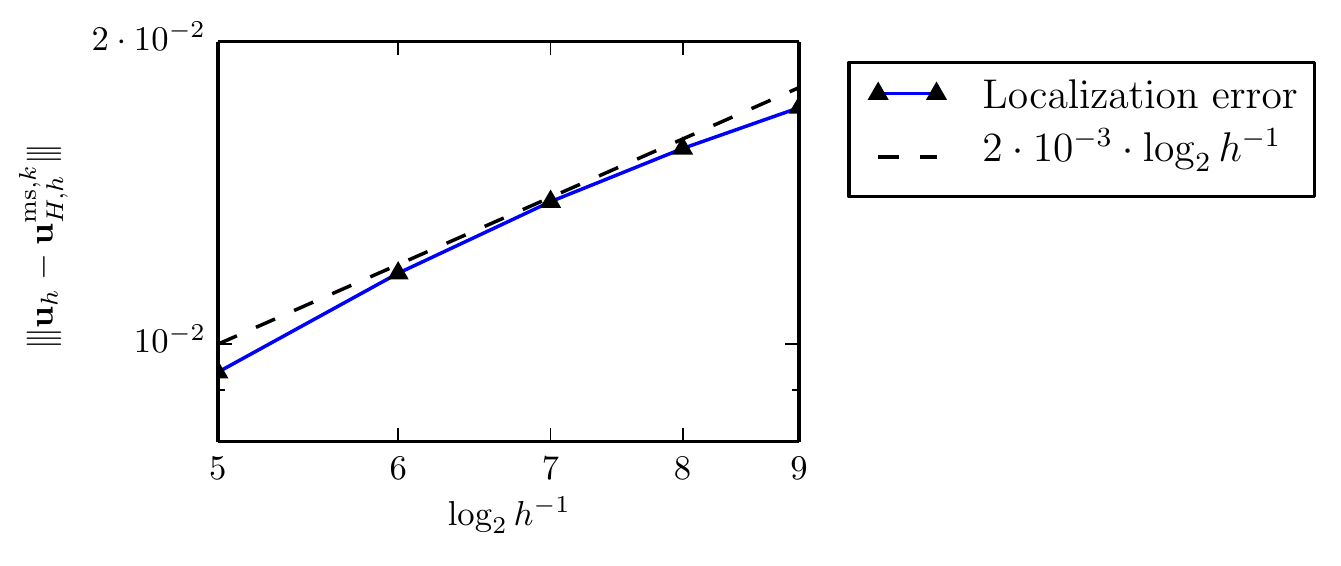}
  \caption{Divergence of the energy norm of the localization error of
    a particular multiscale solution as $h$ decreases.}
  \label{fig:singularity_error}
\end{figure}
\begin{figure}[htb]
  \captionsetup[subfigure]{justification=raggedright}
  \centering
  \begin{subfigure}{.47\textwidth}
    \includegraphics[width=\textwidth,trim=2.5cm 1.3cm 2.5cm 1.3cm,clip=true]{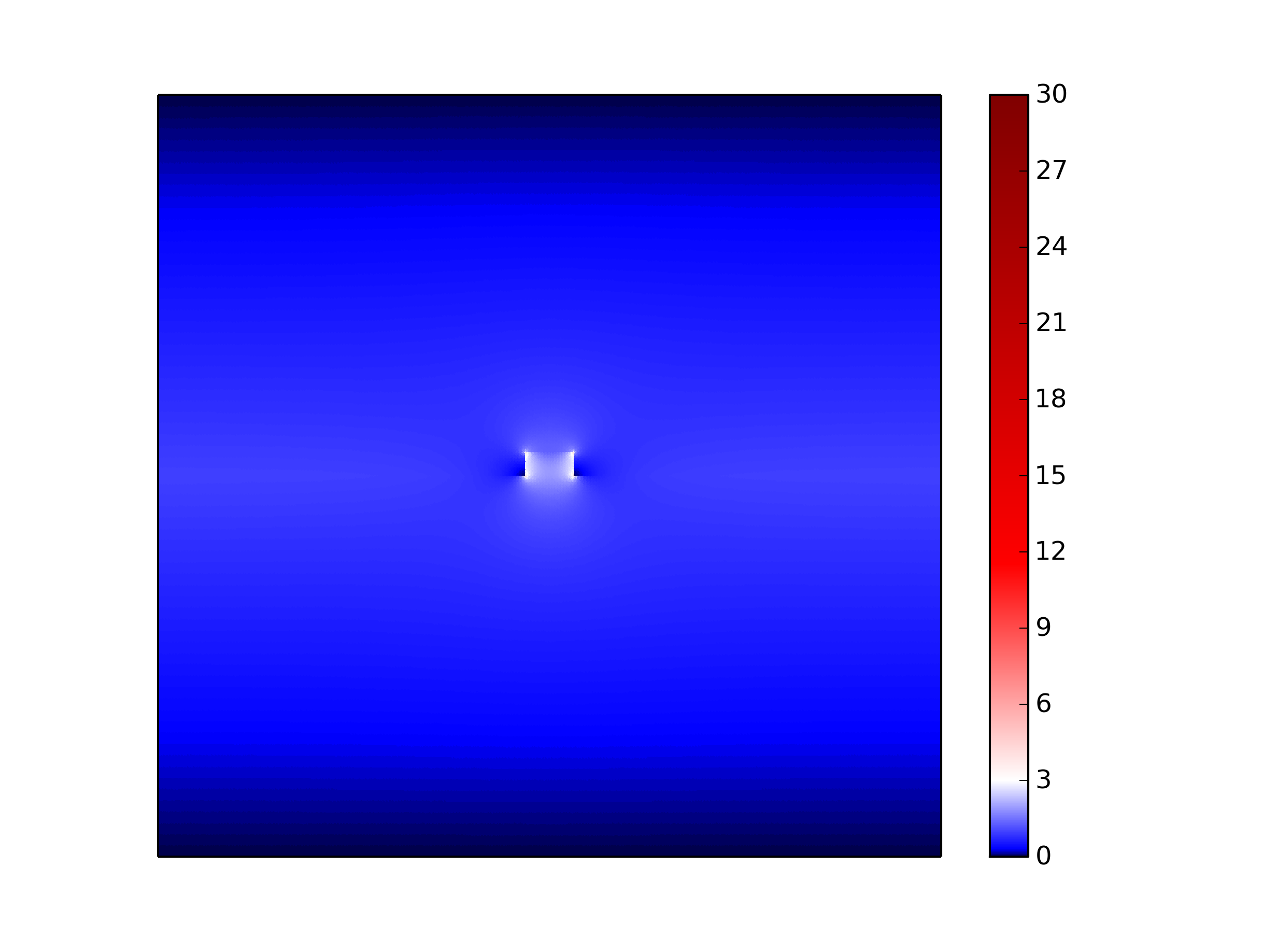}
    \caption{Reference solution, $h = 2^{-9}$. \phantom{text to make it two lines}}
  \end{subfigure}
  \hspace{1em}
  \begin{subfigure}{.47\textwidth}
    \includegraphics[width=\textwidth,trim=2.5cm 1.3cm 2.5cm 1.3cm,clip=true]{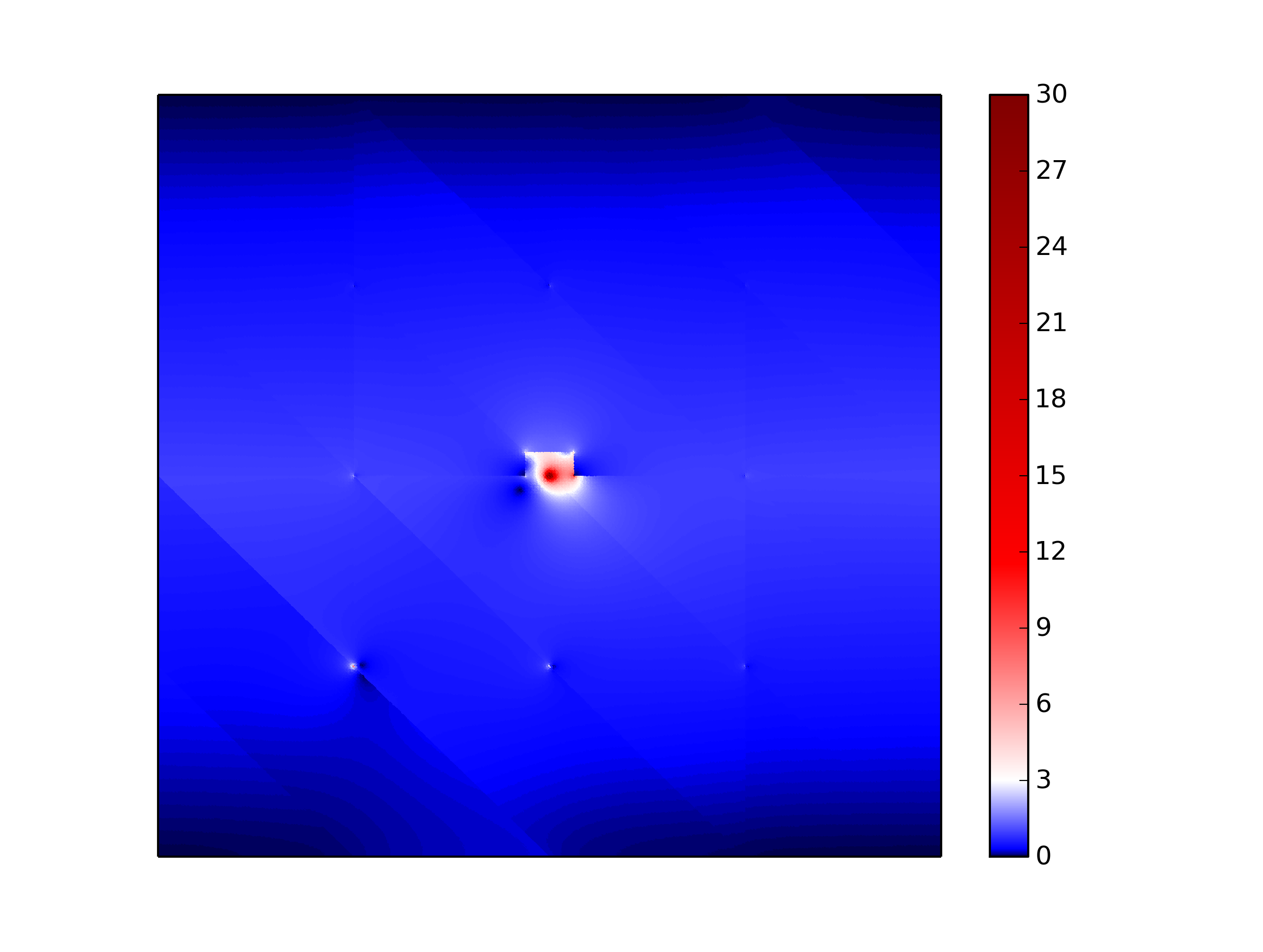}
    \caption{Multiscale solution, $h = 2^{-9}$, $H = 2^{-2}$, $k = 2$.}
  \end{subfigure}
  \caption{Magnitude of flux at the centroid of the triangles.}
  \label{fig:singularity_solutions}
\end{figure}

\subsection{Convergence in an L-shaped domain}
Next, we consider an L-shaped domain with noisy diffusion coefficient
$A$ (case 2.\ in Section~\ref{sec:firstexperiment}) and with
$f \notin Q_H$. In this experiment, we show that the localization
error investigated in the previous section can be dominated by errors
from projecting $f$.

We use the domain $\Omega = [0,1]^2 \setminus [1/2,1]\times[0,1/2]$
and the triangulation presented in Figure~\ref{fig:lshape}.  Both fine
and coarse meshes are constructed as shown in the figure.
\begin{figure}[htb]
  \centering
  \begin{subfigure}{.4\textwidth}
    \centering
    \includegraphics[width=0.3\textwidth,trim=2cm 2cm 2cm 2cm,clip=true]{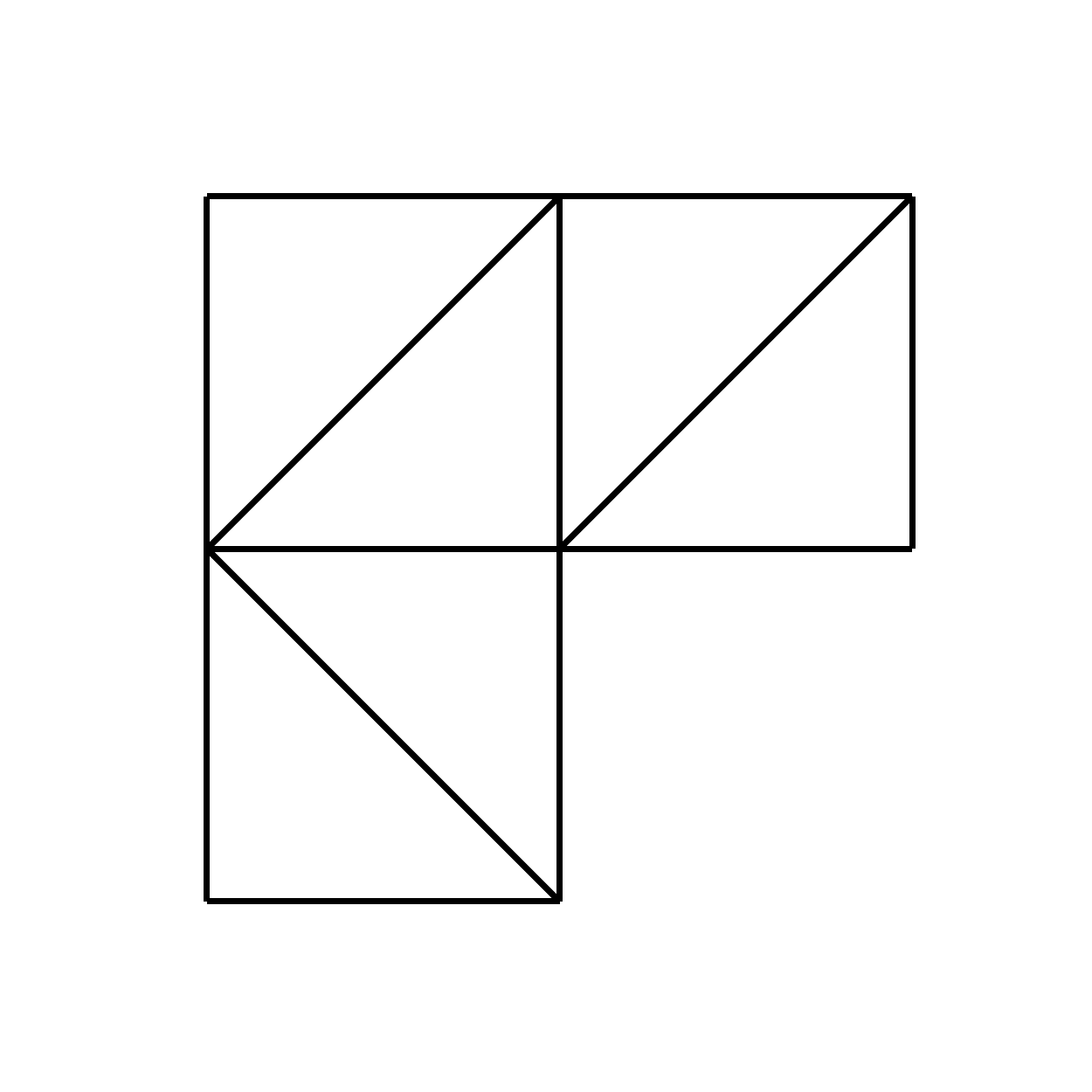}
    \caption{Coarsest mesh, $h = 1/2$.}
  \end{subfigure}
  \hspace{1em}
  \begin{subfigure}{.4\textwidth}
    \centering
    \includegraphics[width=0.3\textwidth,trim=2cm 2cm 2cm 2cm,clip=true]{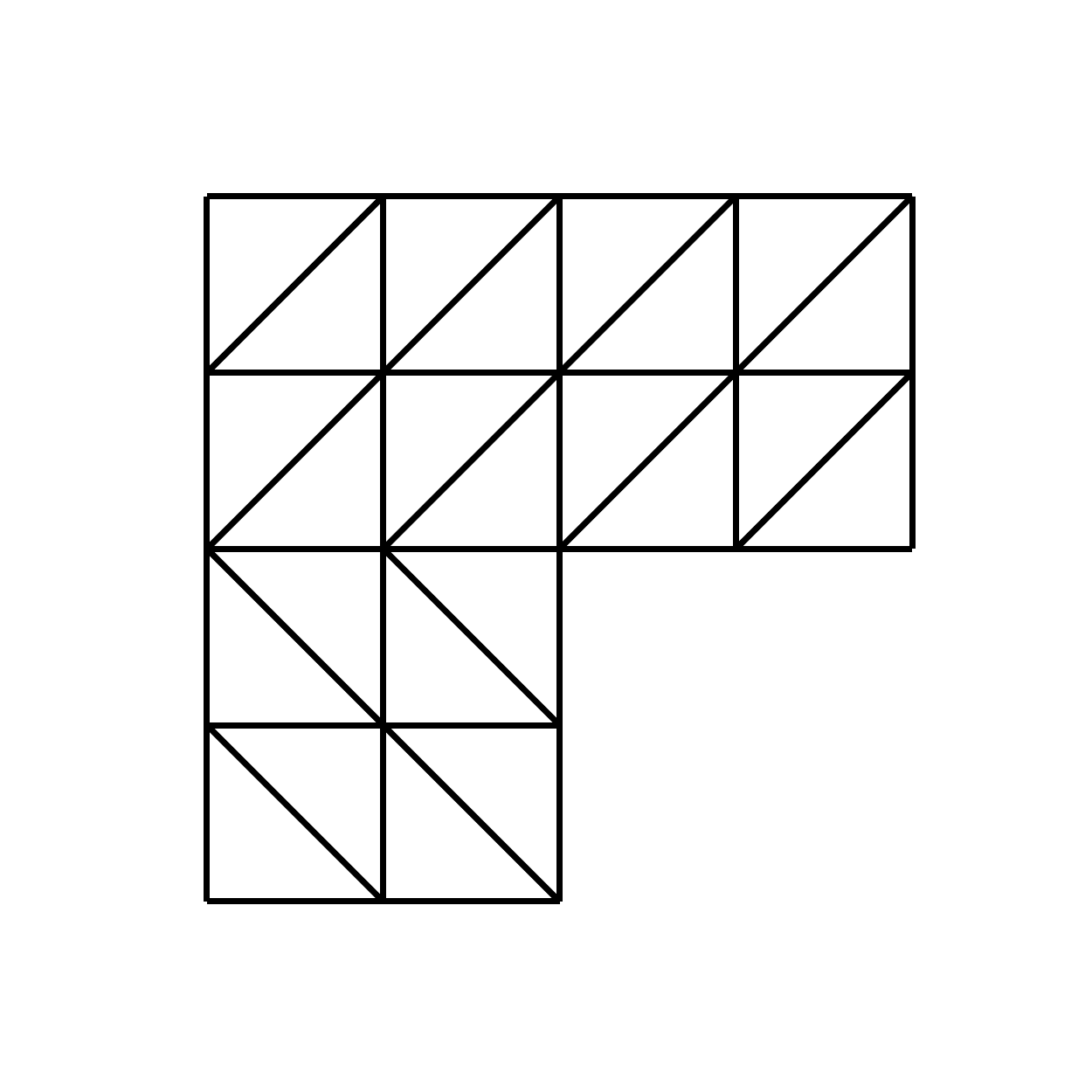}
    \caption{One refinement, $h = 1/4$.}
  \end{subfigure}
  \caption{Family of triangulations of the L-shaped domain.}
  \label{fig:lshape}
\end{figure}
Further, we choose the source function as
\begin{equation*}
  f(x) = \begin{cases} 1/2 + x_1 - x_2 & \text{if }x_2 < 1/2,\\
    -(1/2 + x_1 - x_2) & \text{if }x_1 > 1/2,\\
    0 & \text{otherwise}.
  \end{cases}
\end{equation*}
Note that $f \notin Q_H$ and $\| f - P_H f \|_{L^2(\Omega)} \lesssim
H$. A reference solution $\vec u_h$ was computed
with the standard Raviart--Thomas spaces $V_h$ and $Q_h$ with $h =
2^{-8}$. Solutions $\umsHk$ to the localized multiscale problem were
computed using $H = 2^{-2}, 2^{-3}, \ldots, 2^{-6}$. The patch size
$k$ was chosen as
\begin{equation*}
  k = C(1+\log_2(H/h))^{1/2}\log_2(H^{-1})
\end{equation*}
rounded to the nearest integer, with $C = 0.25$ and $C = 0.5$. The
relative error in energy norm was recorded for the solutions
corresponding to the values of $H$. The resulting convergence plot can be found in
Figure~\ref{fig:lshapeconvergence}. We expect the first term in the
error estimate,
\begin{equation}
    \vertiii{\vec u_h - \umsHk} \lesssim H\|f - P_H f\|_{L^2(\Omega)} + k^{d/2} \lambda(H/h) \theta^{k/\lambda(H/h)} \|f\|_{L^2(\Omega)}
\end{equation}
to be of order $H^2$. From the convergence plots we can see that
$C=0.25$ is not sufficient to make the localization error of at least
order $H^2$, however, $C = 0.5$ is.
\begin{figure}[htb]
  \centering
  \includegraphics[trim=0 0.3cm 0 0,clip=true]{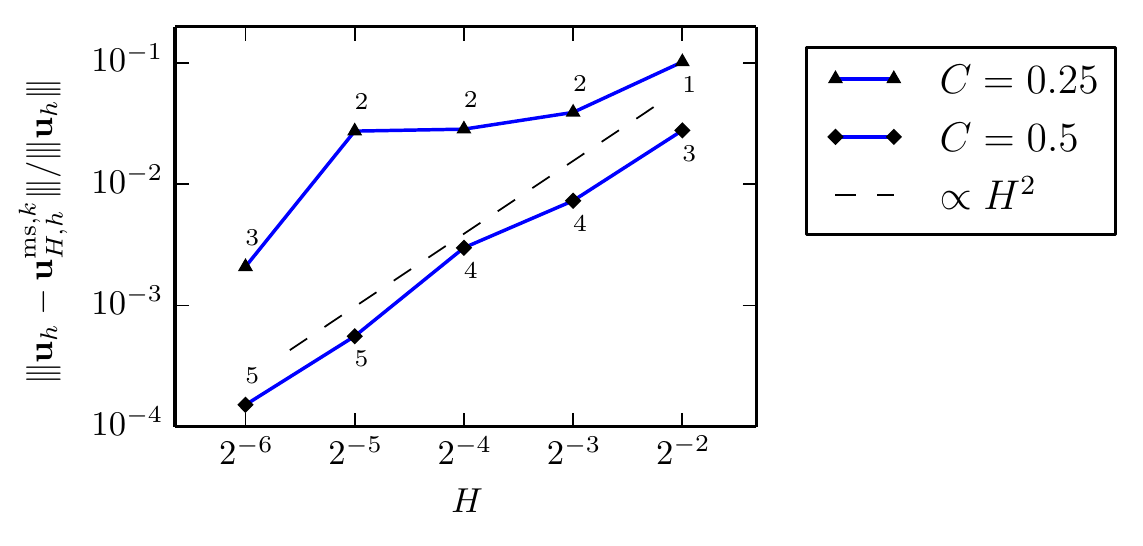}
  \caption{Convergence plot for experiment with L-shaped domain. Shows
    relative error in energy norm for two values of $C$ and a series
    of values of $H$. The number adjacent to a point is the actual
    value of $k$ for the specific simulation corresponding to that
    point.}
  \label{fig:lshapeconvergence}
\end{figure}

\subsection{Comparison with MsFEM}
We compare the proposed method with the results obtained using
the Multiscale Finite Element Method
(MsFEM)
based approach in \cite{Ar11}. The domain is $\Omega =
[0,1.2]\times[0,2.2]$ and the permeability coefficient $A$ is given in
a uniform rectangular grid of size $60 \times 220$ by the 85th
permeability layer in model 2 of SPE10 \cite{Ch01}.

The method proposed in \cite{Ar11} is based on a fine and a coarse
mesh with quadrilateral elements. The fine mesh is uniform $60 \times
220$, i.e.\ aligned with the permeability data, and the coarse mesh is
$6 \times 22$, so that each coarse element is subdivided into $10
\times 10$ fine elements. The implementation of the method proposed in
this work uses triangular meshes,
which is why we divide each of the rectangular elements 
into two triangular elements by a diagonal line drawn from the upper left corner
to the lower right corner.
As coarse mesh, we use a similar triangular
mesh that is constructed from a $6 \times 22$ rectangular mesh
such that the fine mesh is a conforming refinement of the coarse
mesh.

The (quasi-singular) source data $f$ is equal to $1$ in the 
lower left
and $-1$ in the
upper right
fine quadrilateral element. 
Note that such $f$ is a discretization 
of point sources that model production wells.
In particular, the source terms on the continuous level 
are mathematically described 
by Dirac delta functions. Hence, for $h\rightarrow 0$, we only
have $f \in W^{-m,2}(\Omega)$ for $m>\frac{d}{2}$, opposed to $f\in L^2(\Omega)$
as is required for our analysis. To account for this difference,
we follow \cite{Ma11} and compute the localized source corrections
$F_h^{T,\ell} f \in \oVhf{U_\ell(T)}$ on $\ell$-coarse-layer patches for $T
\in \triH$,
\begin{equation*}
    a(F_h^{T,\ell} f, \vec v_h^{\operatorname*{f}}) + b(\vec v_h^{\operatorname*{f}}, {\tilde F}_h^{T,\ell} f) + b(F^{T,\ell}_h f, q_h^{\operatorname*{f}}) = -(f, q_h^{\operatorname*{f}})_{T},
\end{equation*}
for all $\vec v_h^{\operatorname*{f}} \in \oVhf{U_\ell(T)}$ and
$q_h^{\operatorname*{f}} \in \oQhf{U_\ell(T)}$, where
$\oQhf{U_\ell(T)}$ is the restriction of $\Qhf$ to $U_\ell(T)$,
analogous to the definition of $\oVhf{U_\ell(T)}$. (The pressure
solution ${\tilde F}_h^{T,\ell }f$ is not needed for correcting the
flux and is discarded after its use as Lagrange multiplier). Since $f$
is non-zero only for the two triangles $T_1$ and $T_2$ in the
lower left and upper right corners,
only two such corrector problems
need to be solved. The total localized source correction is
$F_h^{\ell} f = F_h^{T_1, \ell} f + F_h^{T_2, \ell} f \in \Vhf$.

The localized corrector problems \eqref{eq:loccorrector} are
unaffected by the source correction. The right hand side of the
localized multiscale problem \eqref{eq:locmixedbilinear} is appended
with the localized source corrections and instead reads: find
$\umsHkell$ such that
\begin{equation*}
  a(\umsHkell, \vec v_h) + b(\vec v_h, \pH) + b(\umsHkell, q_H) = - (f, q_H) - a(F_h^{\ell} f, \vec v_h).
\end{equation*}
Using a value of $\ell = 0$ will be referred to as an ad-hoc source
correction, since we do not expect to have any decay of the correction
already within the source triangle itself. The source corrected
solution is $\umsHkell + F_h^{\ell} f$.

{We emphasize that the need for source correctors 
for singular source terms is not 
an exclusive drawback for our approach, but
it is a common necessity 
shared by all comparable multiscale methods in this setting. 
In particular they are also used for the MsFEM-based approach
in \cite{Ar11} that we use for our comparative study.}

The proposed localized multiscale method was used to solve for the flux
in the described problem for three corrector patch sizes: $k = 1, 2,$
and $3$. Three variants of source correction were used: i) without
source correction, i.e.\ $\umsHk$, ii) with ad-hoc source correction,
i.e.\ $\umsHkell + F_h^{\ell} f$ for $\ell = 0$ 
(without interpolation constraint), 
and iii) with source correction,
i.e.\ $\umsHkell + F_h^{\ell} f$ for $\ell = k, k+1, \infty$. A reference solution
$\vec u_h$ was computed on the fine mesh. Table~\ref{tbl:speresults}
shows the relative energy norm and $L^2$-norm of the difference
between the localized multiscale solution and the reference solution
for the different values of $k$ and $\ell$. The corresponding
$L^2$-norm of the error for the MsFEM method with oversampling HE0-OS
proposed in \cite{Ar11} is also presented in the table. Note that
HE0-OS is based on a discretization with roughly 33\% less degrees of
freedom than the proposed method, since it uses quadrilaterals instead
of triangles 
(however, since this holds for 
both the fine and the coarse mesh, the relative
change in the amount of degrees of freedom
with respect to the reference solution
is the same).
The flux solutions are plotted in
Figure~\ref{fig:speresults}.

The results show that the proposed method even without error
correction compares favorably with the homogenization based
approach. Ad-hoc error correction gives small errors for this problem
in both norms. For source correction with patch size $\ell = k$,
instabilities similar to that studied in Section~\ref{sec:instability}
cause the error to increase. However, letting $\ell = k+1$ is enough
to get errors that compare favorably with \cite{Ar11}.
\begin{table}
  \caption{Relative error in energy norm and $L^2$-norm for the SPE10-85 problem.}
  \centering
  \label{tbl:speresults}
  \begin{tabular}{lcccc}
    \toprule
    Method & $k$ & $\ell$ & Energy norm & $L^2$-norm \\
    \midrule
    \multirow{3}{10em}{Proposed method\\without source correction} 
    & 1 & $-$ & 0.7863 & 0.4069 \\
    & 2 & $-$ & 0.7856 & 0.3369 \\
    & 3 & $-$ & 0.7855 & 0.3325 \\
    \midrule
    \multirow{3}{10em}{Proposed method\\with ad-hoc source correction ($\ell = 0$)} 
    & 1 & 0 & 0.1541 & 0.2700 \\
    & 2 & 0 & 0.1515 & 0.1467 \\
    & 3 & 0 & 0.1537 & 0.1379 \\
    \midrule
    \multirow{9}{10em}{Proposed method\\with source correction ($\ell = k, k+1, \infty$)} 
    & 1 & 1 & 0.1090 & 0.8292 \\
    & 1 & 2 & 0.0459 & 0.2703 \\
    & 1 & $\infty$ & 0.0350 & 0.2504 \\
    \cmidrule(r){2-5}
    & 2 & 2 & 0.0549 & 0.7453 \\
    & 2 & 3 & 0.0185 & 0.0517 \\
    & 2 & $\infty$ & 0.0150 & 0.0490 \\
    \cmidrule(r){2-5}
    & 3 & 3 & 0.0080 & 0.0178 \\
    & 3 & 4 & 0.0051 & 0.0424 \\
    & 3 & $\infty$ & 0.0041 & 0.0088 \\
    \midrule
    HE0-OS \cite{Ar11} & $-$ & $-$ & $-$ & 0.3492\\
    \bottomrule
  \end{tabular}
\end{table}

\begin{figure}[htb]
  \centering
  \begin{subfigure}{.30\textwidth}
    \centering
    \includegraphics[width=\textwidth]{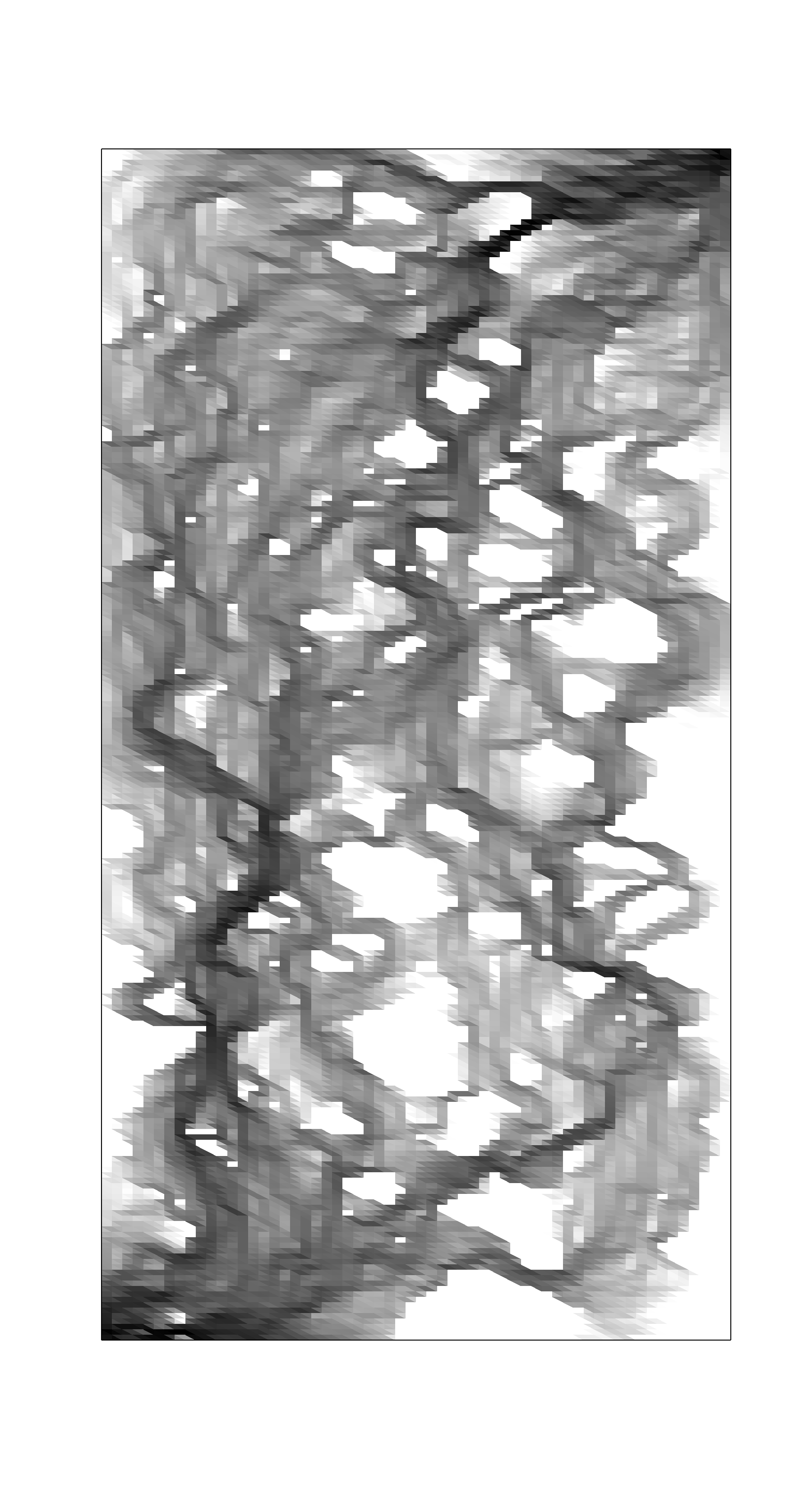}
    \caption{Reference solution}
  \end{subfigure}
  \begin{subfigure}{.30\textwidth}
    \centering
    \includegraphics[width=\textwidth]{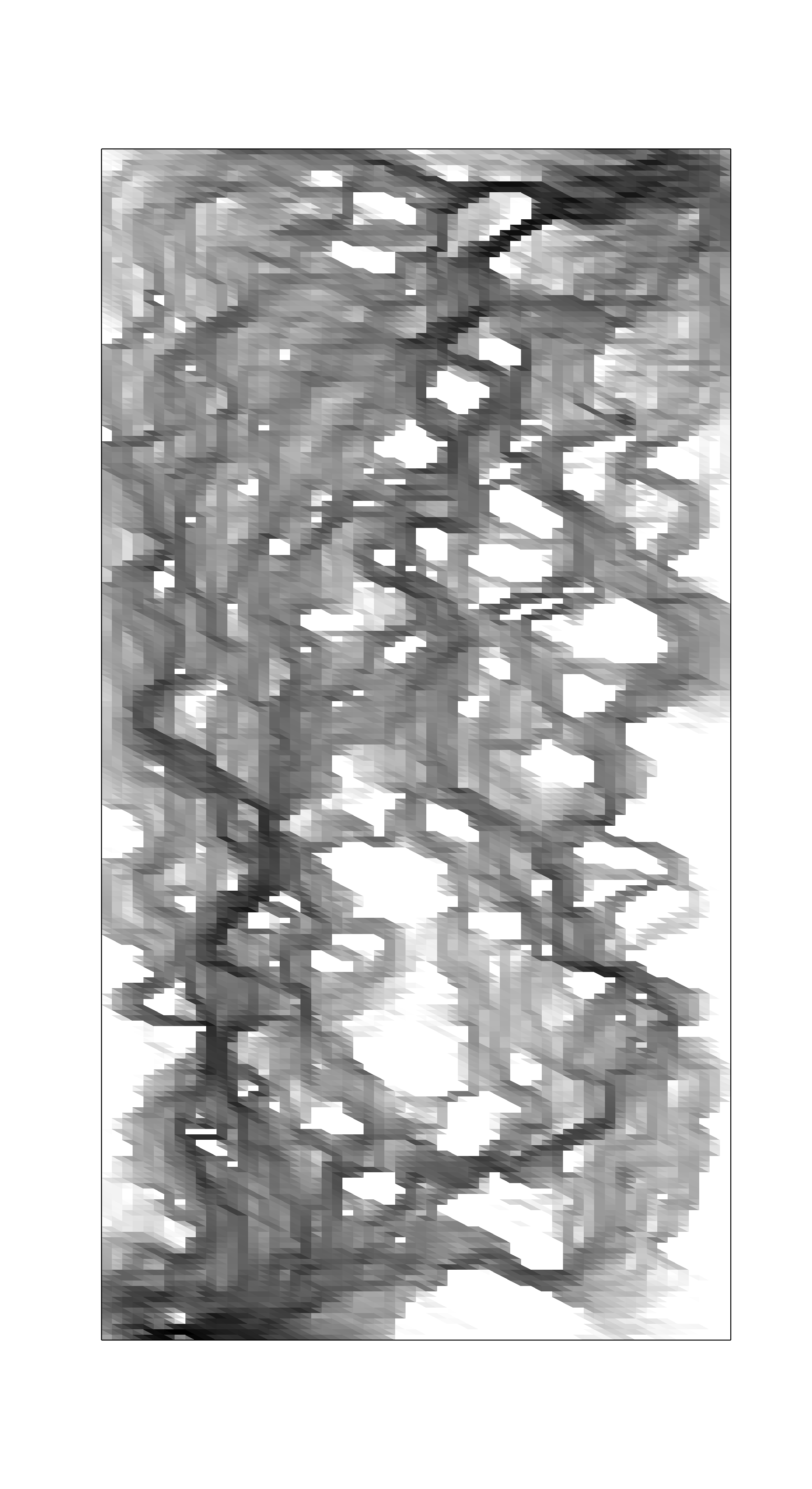}
    \caption{$k = 1, \ell = -1$}
  \end{subfigure}
  \begin{subfigure}{.30\textwidth}
    \centering
    \includegraphics[width=\textwidth]{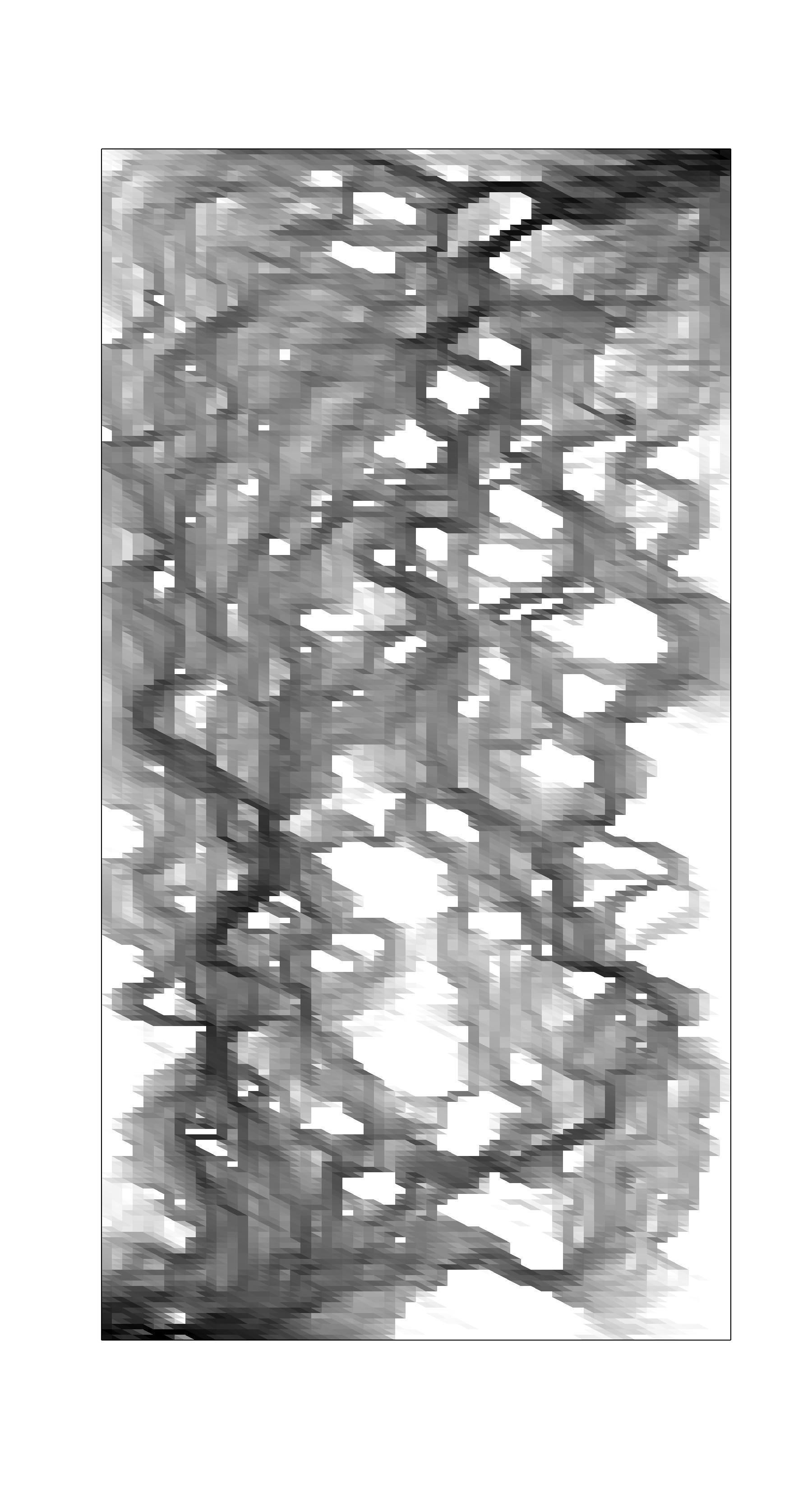}
    \caption{$k = 1, \ell = 0$}
  \end{subfigure}\\
  \begin{subfigure}{.30\textwidth}
    \centering
    \includegraphics[width=\textwidth]{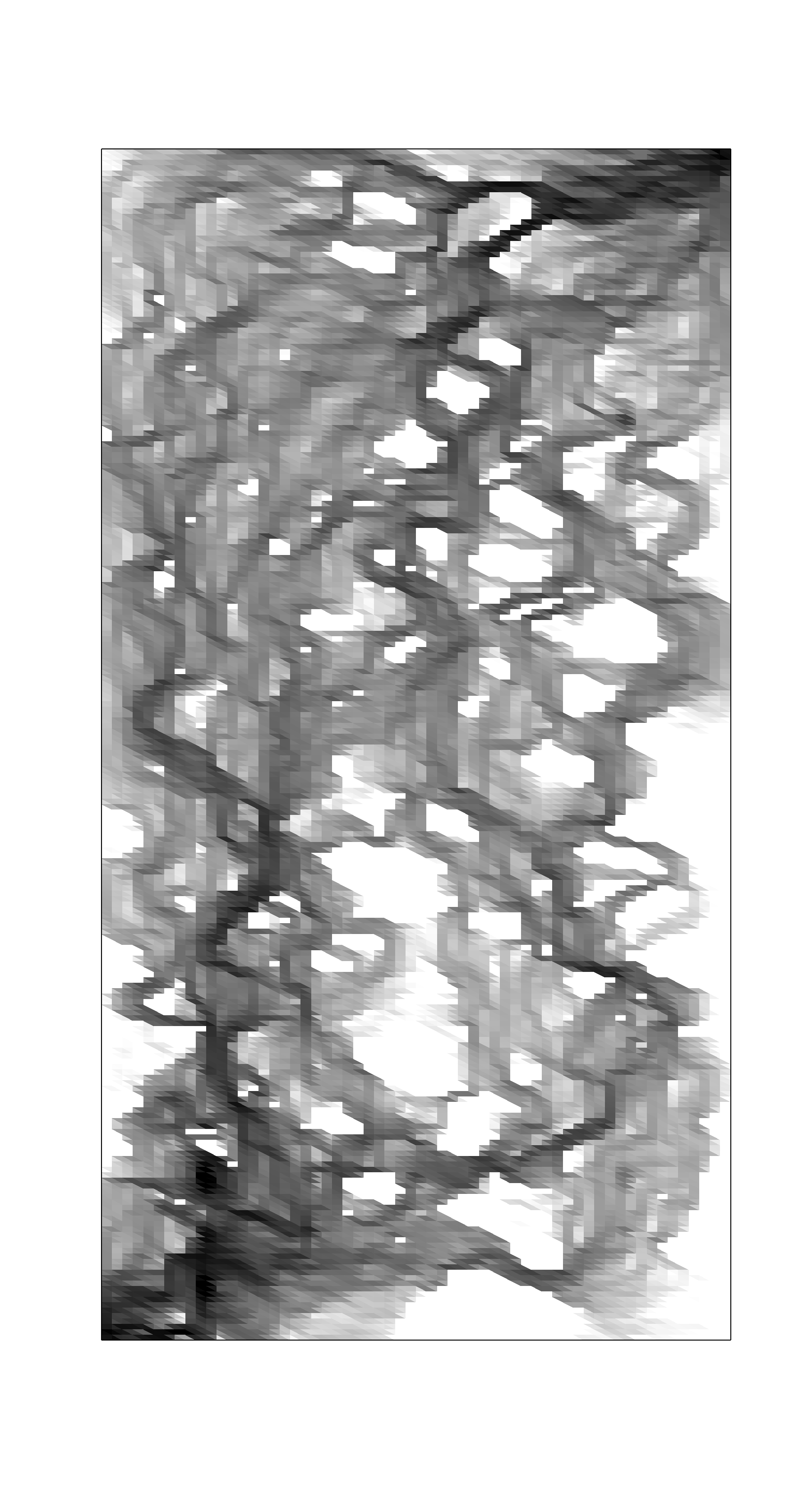}
    \caption{$k = 1, \ell = 1$}
  \end{subfigure}
  \begin{subfigure}{.30\textwidth}
    \centering
    \includegraphics[width=\textwidth]{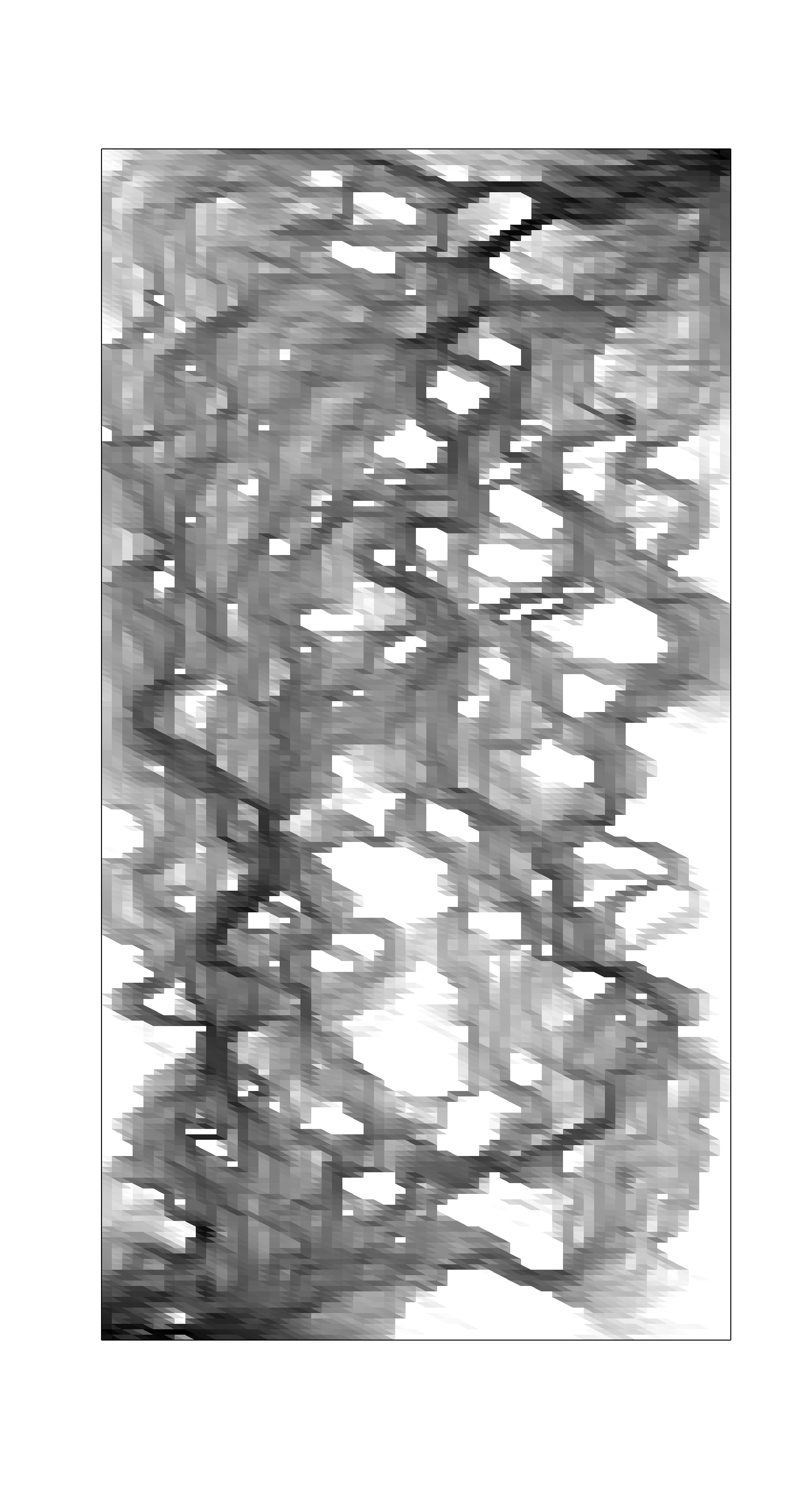}
    \caption{$k = 1, \ell = 2$}
  \end{subfigure}
  \begin{subfigure}{.30\textwidth}
    \centering
    \includegraphics[width=\textwidth]{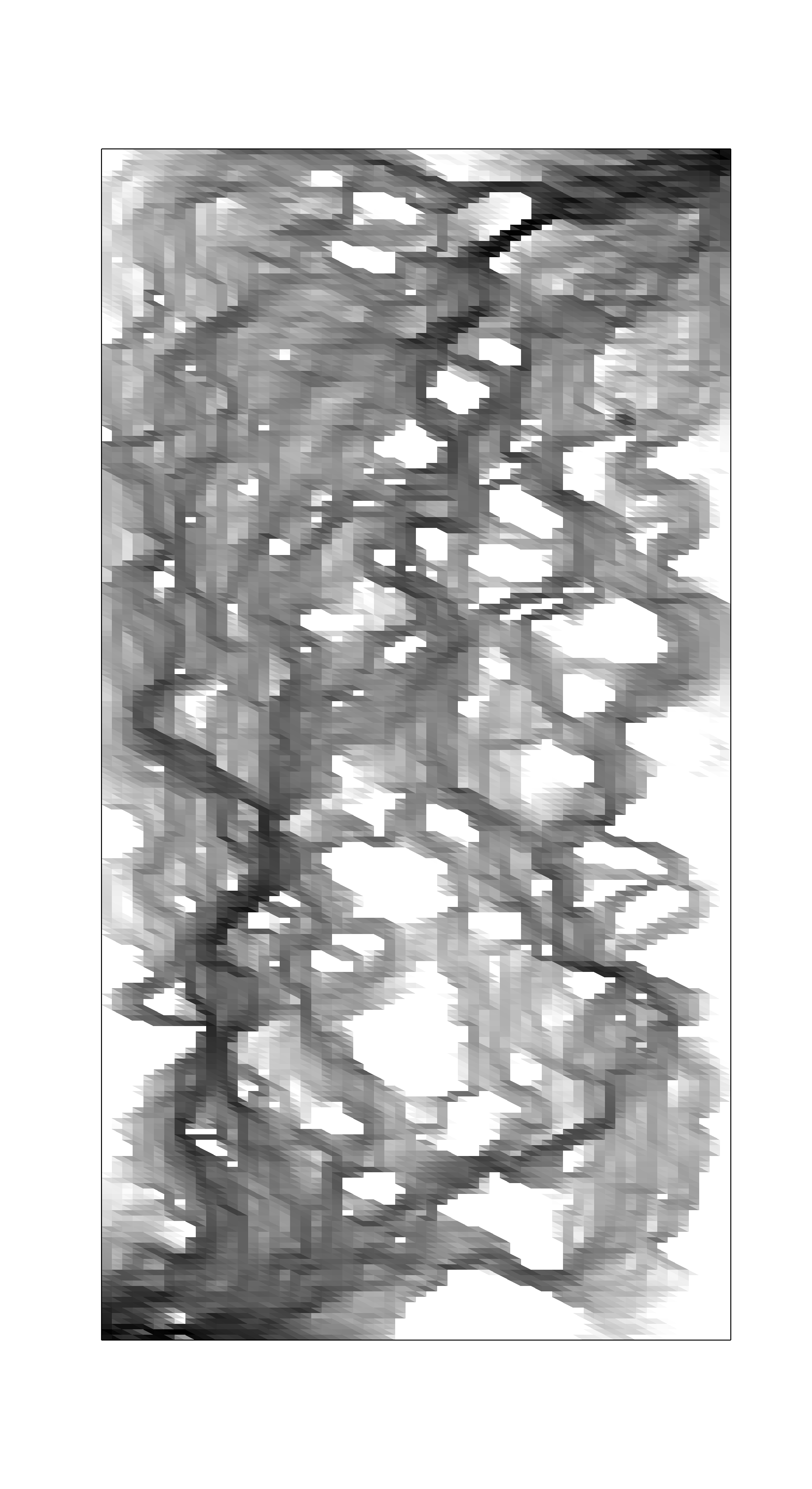}
    \caption{$k = 2, \ell = 3$}
  \end{subfigure}
  \caption{Flux solutions for the SPE10-85 problem. Figure (a) shows
    the reference flux solution and (b--f) show the multiscale flux
    solutions for $k = 1$ and $2$, and different source corrections
    ($\ell = -1$ means no source correction and $\ell = 0$ means
    ad-hoc error correction). The color maps to the magnitude of the
    flux at the midpoint of the triangular elements. The colors map
    from $10^{-5}$ (white) to $10^{-2}$ (black) and is saturated at
    white and black for lower and higher values, respectively.}
  \label{fig:speresults}
\end{figure}

\section*{Acknowledgement}
We gratefully acknowledge the anonymous reviewers for their careful
reading and insightful suggestions that improved the manuscript.


\begin{thebibliography}{10}

\bibitem{Aa04}
J.~Aarnes.
\newblock On the use of a mixed multiscale finite element method for greater
  flexibility and increased speed or improved accuracy in reservoir simulation.
\newblock {\em Multiscale Model. Simul.}, 2(3):421--439, 2004.

\bibitem{AbH15}
A.~Abdulle and P.~Henning.
\newblock A reduced basis localized orthogonal decomposition.
\newblock {\em J. Comput. Phys.}, 295:379--401, 2015.

\bibitem{AbH14c}
A.~Abdulle and P.~Henning.
\newblock Localized orthogonal decomposition method for the wave equation with
  a continuum of scales.
\newblock {\em ArXiv e-print 1406.6325, {\rm to appear in} Math. Comp.}, 2016+.

\bibitem{Arb04}
T.~Arbogast.
\newblock Analysis of a two-scale, locally conservative subgrid upscaling for
  elliptic problems.
\newblock {\em SIAM J. Numer. Anal.}, 42(2):576--598 (electronic), 2004.

\bibitem{Ar11}
T.~Arbogast.
\newblock Homogenization-based mixed multiscale finite elements for problems
  with anisotropy.
\newblock {\em Multiscale Model. Simul.}, 9(2):624--653, 2011.

\bibitem{ArBo06}
T.~Arbogast and K.~Boyd.
\newblock Subgrid upscaling and mixed multiscale finite elements.
\newblock {\em SIAM J. Numer. Anal.}, 44(3):1150--1171, 2006.

\bibitem{DoFaWi06}
D.~N. Arnold, R.~S. Falk, and R.~Winther.
\newblock Finite element exterior calculus, homological techniques, and
  applications.
\newblock {\em Acta Numer.}, 15:1--155, 2006.

\bibitem{BoBrFo13}
D.~Boffi, F.~Brezzi, and M.~Fortin.
\newblock {\em Mixed Finite Element Methods and Applications}, volume~44 of
  {\em Springer Series in Computational Mathematics}.
\newblock Springer-Verlag, Berlin Heidelberg, 2nd edition, 2013.

\bibitem{ChHo02}
Z.~Chen and T.~Y. Hou.
\newblock A mixed multiscale finite element method for elliptic problems with
  oscillating coefficients.
\newblock {\em Math. Comp.}, 72:541--576, 2003.

\bibitem{Chr05}
S.~H. Christiansen.
\newblock Stability of {H}odge decompositions in finite element spaces of
  differential forms in arbitrary dimension.
\newblock {\em Numer. Math.}, 107(1):87--106, 2007.

\bibitem{ChWi08}
S.~H. Christiansen and R.~Winther.
\newblock Smoothed projections in finite element exterior calculus.
\newblock {\em Math. Comp.}, 77(262):813--829, 2008.

\bibitem{Ch01}
M.~A. Christie.
\newblock Tenth {SPE} comparative solution project: A comparison of upscaling
  techniques.
\newblock {\em SPE Reservoir Eval. Eng.}, 4:308--317, 2001.

\bibitem{ElGeMa13}
D.~Elfverson, E.~H. Georgoulis, A.~M{\aa}lqvist, and D.~Peterseim.
\newblock Convergence of a discontinuous {G}alerkin multiscale method.
\newblock {\em SIAM J. Numer. Anal.}, 51(6):3351--3372, 2013.

\bibitem{ElGiHe14}
D.~Elfverson, V.~Ginting, and P.~Henning.
\newblock On multiscale methods in {P}etrov-{G}alerkin formulation.
\newblock {\em Numer. Math.}, 131(4):643--682, 2015.

\bibitem{HeMa14}
P.~Henning and A.~M{\aa}lqvist.
\newblock Localized orthogonal decomposition techniques for boundary value
  problems.
\newblock {\em SIAM J. Sci. Comput.}, 36(4):A1609--A1634, 2014.

\bibitem{HeMaPe14}
P.~Henning, A.~M{\aa}lqvist, and D.~Peterseim.
\newblock A localized orthogonal decomposition method for semi-linear elliptic
  problems.
\newblock {\em ESAIM Math. Model. Numer. Anal.}, 48(5):1331--1349, 2014.

\bibitem{HMP13b}
P.~Henning, P.~Morgenstern, and D.~Peterseim.
\newblock Multiscale partition of unity.
\newblock In M.~Griebel and M.~A. Schweitzer, editors, {\em Meshfree Methods
  for Partial Differential Equations VII}, volume 100 of {\em Lecture Notes in
  Computational Science and Engineering}, pages 185--204. Springer
  International Publishing, 2015.

\bibitem{HeP13}
P.~Henning and D.~Peterseim.
\newblock Oversampling for the multiscale finite element method.
\newblock {\em Multiscale Model. Simul.}, 11(4):1149--1175, 2013.

\bibitem{HoWu97}
T.~Y. Hou and X.-H. Wu.
\newblock A multiscale finite element method for elliptic problems in composite
  materials and porous media.
\newblock {\em J. Comput. Phys.}, 134(1):169 -- 189, 1997.

\bibitem{HuSa07}
T.~Hughes and G.~Sangalli.
\newblock Variational multiscale analysis: the fine-scale {G}reen's function,
  projection, optimization, localization, and stabilized methods.
\newblock {\em SIAM J. Numer. Anal.}, 45(2):539--557, 2007.

\bibitem{Hu95}
T.~J.~R. Hughes.
\newblock Multiscale phenomena: {G}reen's functions, the
  {D}irichlet-to-{N}eumann formulation, subgrid scale models, bubbles and the
  origins of stabilized methods.
\newblock {\em Comput. Methods Appl. Mech. Engrg.}, 127(1-4):387--401, 1995.

\bibitem{HuFeGoMaQu98}
T.~J.~R. Hughes, G.~R. Feij{\'o}o, L.~Mazzei, and J.-B. Quincy.
\newblock The variational multiscale method---a paradigm for computational
  mechanics.
\newblock {\em Comput. Methods Appl. Mech. Engrg.}, 166(1-2):3--24, 1998.

\bibitem{IKL13}
D.~Iftimie, G.~Karch, and C.~Lacave.
\newblock Asymptotics of solutions to the {N}avier-{S}tokes system in exterior
  domains.
\newblock {\em J. Lond. Math. Soc. (2)}, 90(3):785--806, 2014.

\bibitem{LaMa07}
M.~G. Larson and A.~M{\aa}lqvist.
\newblock Adaptive variational multiscale methods based on a posteriori error
  estimation: Energy norm estimates for elliptic problems.
\newblock {\em Comput. Methods Appl. Mech. Engrg.}, 196:2313--2324, 2007.

\bibitem{LaMa09}
M.~G. Larson and A.~M\r{a}lqvist.
\newblock A mixed adaptive variational multiscale method with applications in
  oil reservoir simulation.
\newblock {\em Math. Models Methods Appl. Sci.}, 19(07):1017--1042, 2009.

\bibitem{MaPe14}
A.~M{\aa}lqvist and D.~Peterseim.
\newblock Localization of elliptic multiscale problems.
\newblock {\em Math. Comp.}, 83(290):2583--2603, 2014.

\bibitem{MaP14b}
A.~M{\aa}lqvist and D.~Peterseim.
\newblock Computation of eigenvalues by numerical upscaling.
\newblock {\em Numer. Math.}, 130(2):337--361, 2015.

\bibitem{Ma11}
A.~M{\aa}lqvist.
\newblock Multiscale methods for elliptic problems.
\newblock {\em Multiscale Model. Simul.}, 9(3):1064--1086, 2011.

\bibitem{NoPaPi08}
J.~Nolen, G.~Papanicolaou, and O.~Pironneau.
\newblock A framework for adaptive multiscale methods for elliptic problems.
\newblock {\em Multiscale Model. Simul.}, 7(1):171--196, 2008.

\bibitem{Pet14}
D.~Peterseim.
\newblock Eliminating the pollution effect in {H}elmholtz problems by local
  subscale correction.
\newblock {\em to appear in Math. Comp.}, 2016+.

\bibitem{RaTh77}
P.~A. Raviart and J.~M. Thomas.
\newblock A mixed finite element method for 2-nd order elliptic problems.
\newblock In I.~Galligani and E.~Magenes, editors, {\em Mathematical Aspects of
  Finite Element Methods}, volume 606 of {\em Lecture Notes in Mathematics},
  pages 292--315. Springer Berlin Heidelberg, 1977.

\bibitem{Sch05}
J.~Sch{\"o}berl.
\newblock A posteriori error estimates for {M}axwell equations.
\newblock {\em Math. Comp.}, 77(262):633--649, 2008.

\bibitem{Wie09}
H.~Wendland.
\newblock Divergence-free kernel methods for approximating the {S}tokes
  problem.
\newblock {\em SIAM J. Numer. Anal.}, 47(4):3158--3179, 2009.

\bibitem{WoToWi00}
B.~Wohlmuth, A.~Toselli, and O.~Widlund.
\newblock An iterative substructuring method for {R}aviart--{T}homas vector
  fields in three dimensions.
\newblock {\em SIAM J. Numer. Anal.}, 37(5):1657--1676, 2000.

\end{thebibliography}
\end{document}